\numberwithin{equation}{section}
\DeclareMathOperator{\R}{\mathbb{R}} % Real numbers
\DeclareMathOperator{\Z}{\mathbb{Z}} % Integers
\newcommand{\p}{\mathbb{P}} % Probability P
\renewcommand{\P}{\mathbb{P}} % Probability P
\newcommand{\E}{\mathbb{E}} % Expectation E
\newcommand{\eps}{\varepsilon} % Epsilon
\DeclareMathOperator{\Var}{Var} % Variance Var
\DeclareMathOperator{\Cov}{Cov} % Covariance Cov 
\newcommand{\TV}{\mathrm{TV}} % Total Variation TV 
\newcommand{\Tr}{\mathrm{Tr}} % Trace
\DeclareMathOperator{\Bin}{Bin} % Binomial Bin
\newcommand{\Beta}{\mathrm{Beta}} % Beta distribution
\newcommand{\Dir}{\mathrm{Dir}} % Dirichlet distribution Dir
\newcommand{\Poi}{\mathrm{Poi}} % Poisson Poi 
\newcommand{\diag}{\mathrm{diag}} % Diagonal diag 
\DeclareMathOperator*{\argmax}{arg\,max} % argmax
\newcommand{\SBM}{\mathrm{SBM}} % Stochastic Block Model SBM
\newcommand{\PA}{\mathrm{PA}} % Preferential Attachment PA
\newcommand{\UA}{\mathrm{UA}} % Uniform Attachment UA
\def\cL{{\mathcal L}}
\def\cN{{\mathcal N}}
\def\cP{{\mathcal P}}
\newtheorem{theorem}{Theorem}[section]
\newtheorem{lemma}[theorem]{Lemma}
\newtheorem{conjecture}[theorem]{Conjecture}
\newtheorem{question}[theorem]{Question}
\newtheorem{example}[theorem]{Example}
\newtheorem{exercise}{Exercise}[section]
\begin{document}

\title{Basic models and questions in statistical network analysis}
\author{
	Mikl\'os Z.\ R\'acz
	\thanks{Microsoft Research; \texttt{miracz@microsoft.com}.} \\
	with 
	S\'ebastien Bubeck
	\thanks{Microsoft Research; \texttt{sebubeck@microsoft.com}.}
}
\date{\today}

\maketitle

%%%%%%%%%%%%%%%%
%%% Abstract %%%
%%%%%%%%%%%%%%%%

\begin{abstract}
Extracting information from large graphs has become an important statistical problem since network data is now common in various fields. 
In this minicourse we will investigate the most natural statistical questions for three canonical probabilistic models of networks: 
(i) community detection in the stochastic block model, 
(ii) finding the embedding of a random geometric graph, and 
(iii) finding the original vertex in a preferential attachment tree. 
Along the way we will cover many interesting topics in probability theory such as 
P\'olya urns, large deviation theory, concentration of measure in high dimension, entropic central limit theorems, and more. 

Outline:
\begin{itemize}
 \item Lecture 1: A primer on exact recovery in the general stochastic block model. 
 \item Lecture 2: Estimating the dimension of a random geometric graph on a high-dimensional sphere. 
 \item Lecture 3: Introduction to entropic central limit theorems and a proof of the fundamental limits of dimension estimation in random geometric graphs. 
 \item Lectures 4 \& 5: Confidence sets for the root in uniform and preferential attachment trees.
\end{itemize}
\end{abstract}

%%%%%%%%%%%%%%%%
%%% Document %%%
%%%%%%%%%%%%%%%%

%\newpage 

\hfill

%%%%%%%%%%%%%%%%%%%%%%%%%%%%%%%%%%%%%%%%%%%%%%%
\subsection*{Acknowledgements} \label{sec:ack} %%%
%%%%%%%%%%%%%%%%%%%%%%%%%%%%%%%%%%%%%%%%%%%%%%%

These notes were prepared for a minicourse presented at University of Washington during June 6--10, 2016, and at the XX Brazilian School of Probability held at the S\~{a}o Carlos campus of Universidade de S\~{a}o Paulo during July 4--9, 2016. 
We thank the organizers of the Brazilian School of Probability, 
Paulo Faria da Veiga, 
Roberto Imbuzeiro Oliveira, 
Leandro Pimentel, 
and Luiz Renato Fontes, 
for inviting us to present a minicourse on this topic. 
We also thank Sham Kakade, Anna Karlin, and Marina Meila for help with organizing at University of Washington. 
Many thanks to all the participants who asked good questions and provided useful feedback, 
in particular Kira Goldner, Chris Hoffman, Jacob Richey, and Ryokichi Tanaka in Seattle, 
and Vladimir Belitsky, Santiago Duran, Simon Griffiths, and Roberto Imbuzeiro Oliveira in S\~{a}o Carlos. 

\newpage

%%%%%%%%%%%%%%%%%%%%%%%%%%%%%%%%%%%%%%%%%%%%%%%%%%%%%%%%%%%%%%%%%%%%%%%%%%%%%%%%%%%%%%%%%%%%%%%%%%%%%%%%%%
\section{Lecture 1: A primer on exact recovery in the general stochastic block model} \label{sec:lec1} %%%
%%%%%%%%%%%%%%%%%%%%%%%%%%%%%%%%%%%%%%%%%%%%%%%%%%%%%%%%%%%%%%%%%%%%%%%%%%%%%%%%%%%%%%%%%%%%%%%%%%%%%%%%%%

Community detection is a fundamental problem in many sciences, 
such as sociology (e.g., finding tight-knit groups in social networks), biology (e.g., detecting protein complexes), and beyond. 
Given its importance, there have been a plethora of algorithms developed in the past few decades to detect communities. 
But how can we test whether an algorithm performs well? 
What are the fundamental limits to \emph{any} community detection algorithm? 
Often in real data the ground truth is not known (or there is not even a well-defined ground truth), 
so judging the performance of algorithms can be difficult. 
Probabilistic generative models can be used to model real networks, and even if they do not fit the data perfectly, they can still be useful: 
they can act as benchmarks for comparing different clustering algorithms, since the ground truth is known.

Perhaps the most widely studied generative model that exhibits community structure is the stochastic block model (SBM). 
The SBM was first introduced in sociology~\cite{HLL83} and was then studied in several different scientific communities, including mathematics, computer science, physics, and statistics~\cite{BLCS,CondonKarp,GirvanNewman,BickelChen,KarrerNewmanDCSBM,rohe2011spectral}.\footnote{Disclaimer: the literature on community detection is vast and rapidly growing. It is not our intent here to survey this literature; we refer the interested reader to the papers we cite for further references.} 
It gives a distribution on graphs with $n$ vertices with a hidden partition of the nodes into $k$ communities. 
The relative sizes of the communities, and the edge densities connecting communities are parameters of the general SBM. 
The statistical inference problem is then to recover as much of the community structure as possible given a realization of the graph, but without knowing any of the community labels.

%%%%%%%%%%%%%%%%%%%%%%%%%%%%%%%%%%%%%%%%%%%%%%%%%%%%%%%%%%%%%%%%%%%%%%%%%%%%%%%%%%%
\subsection{The stochastic block model and notions of recovery} \label{sec:sbm} %%%
%%%%%%%%%%%%%%%%%%%%%%%%%%%%%%%%%%%%%%%%%%%%%%%%%%%%%%%%%%%%%%%%%%%%%%%%%%%%%%%%%%%

The general stochastic block model is a distribution on graphs with latent community structure, and it has three parameters: 
$n$, the number of vertices; 
a probability distribution $p = (p_1, \dots, p_k)$ that describes the relative sizes of the communities; 
and $Q \in \left[0,1\right]^{k \times k}$, a symmetric $k \times k$ matrix that describes the probabilities with which two given vertices are connected, depending on which communities they belong to. 
The number of communities, $k$, is implicit in this notation; in these notes we assume that $k$ is a fixed constant. 
A random graph from $\SBM( n, p, Q)$ is defined as follows: 
\begin{itemize}
 \item The vertex set of the graph is $V = \left\{ 1, \dots, n \right\} \equiv \left[ n \right]$. 

 \item Every vertex $v \in V$ is independently assigned a (hidden) label $\sigma_v \in \left[ k \right]$ from the probability distribution $p$ on $\left[k \right]$. That is, $\p \left( \sigma_v = i \right) = p_i$ for every $i \in \left[k \right]$. 

 \item Given the labels of the vertices, each (unordered) pair of vertices $\left( u, v \right) \in V \times V$ is connected independently with probability $Q_{\sigma_{u}, \sigma_{v}}$. 
\end{itemize}

\begin{figure}[h!]
 \centering
 \includegraphics[width=0.55\textwidth]{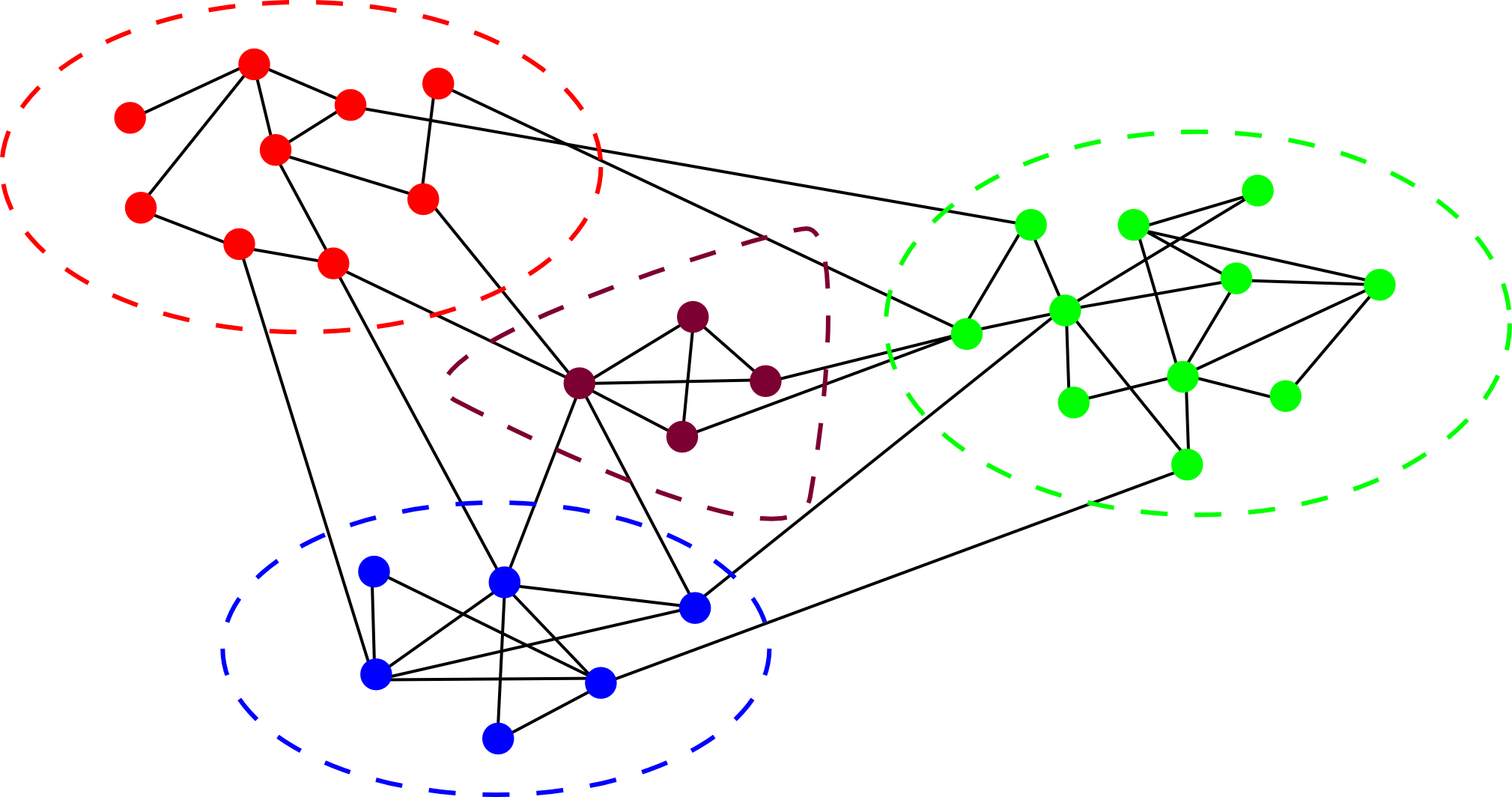}
 \caption{A schematic of the general stochastic block model.}
 \label{fig:sbm}
\end{figure}

\begin{example}[Symmetric communities]\label{ex:symm}
 A simple example to keep in mind is that of symmetric communities, with more edges within communities than between communities. 
This is modeled by the SBM with $p_i = 1/k$ for all $i \in \left[ k \right]$ and $Q_{i,j} = a$ if $i = j$ and $Q_{i,j} = b$ otherwise, with $a > b > 0$. 
\end{example}

We write $G \sim \SBM(n,p,Q)$ for a graph generated according to the SBM without the hidden vertex labels revealed. 
The goal of a statistical inference algorithm is to recover as many labels as possible using only the underlying graph as an observation. 
There are various notions of success that are worth studying. 
\begin{itemize}
 \item \textbf{Weak recovery} (also known as detection). 
An algorithm is said to \emph{weakly recover} or \emph{detect} the communities 
if it outputs a partition of the nodes which is positively correlated with the true partition, with high probability (whp)\footnote{In these notes ``with high probability'' stands for with probability tending to $1$ as the number of nodes in the graph, $n$, tends to infinity.}.

 \item \textbf{Partial recovery.} 
How much can be recovered about the communities? 
An algorithm is said to \emph{recover communities with an accuracy of $\alpha \in [0,1]$} 
if it outputs a labelling of the nodes which agrees with the true labelling on a fraction $\alpha$ of the nodes whp. 
An important special case is when only $o(n)$ vertices are allowed to be misclassified whp, known as \emph{weak consistency} or \emph{almost exact recovery}.

 \item \textbf{Exact recovery} (also known as recovery or strong consistency). 
The strongest notion of reconstruction is to recover the labels of all vertices exactly whp. 
When this is not possible, it can still be of interest to understand which communities can be exactly recovered, if not all; this is sometimes known as \emph{``partial-exact-recovery''}. 

\end{itemize}

In all the notions above, the agreement of a partition with the true partition is maximized over all relabellings of the communities, since we are not interested in the specific original labelling per se, but rather the partition (community structure) it induces.

The different notions of recovery naturally lead to studying different regimes of the parameters. 
For weak recovery to be possible, many vertices in all but one community should be non-isolated (in the symmetric case this means that there should be a giant component), requiring the edge probabilities to be $\Omega \left( 1 / n \right)$. 
For exact recovery, all vertices in all but one community should be non-isolated (in the symmetric case this means that the graph should be connected), requiring the edge probabilities to be $\Omega \left( \ln(n) / n \right)$. 
In these regimes it is natural to scale the edge probability matrices accordingly, i.e., to consider $\SBM \left( n, p, Q / n \right)$ or $\SBM \left( n, p, \ln \left( n \right) Q / n \right)$, where $Q \in \R_{+}^{k \times k}$.

There has been lots of work in the past few years understanding the fundamental limits to recovery under the various notions discussed above. 
For weak recovery there is a sharp phase transition, the threshold of which was first conjectured in~\cite{DKMZ11}. This was proven first for two symmetric communities~\cite{massoulie2014community,MNS13} and then for multiple communities~\cite{AbbeSandon15detection}. Partial recovery is less well understood, and finding the fraction of nodes that can be correctly recovered for a given set of parameters is an open problem; see~\cite{MNS14belief} for results in this direction for two symmetric communities.

In this lecture we are interested in exact recovery, for which Abbe and Sandon gave the value of the threshold for the general SBM, and showed that a quasi-linear time algorithm works all the way to the threshold~\cite{AbbeSandon15} (building on previous work that determined the threshold for two symmetric communities~\cite{ABH16,MNS15}). The remainder of this lecture is an exposition of their main results and a few of the key ideas that go into proving and understanding it.

%%%%%%%%%%%%%%%%%%%%%%%%%%%%%%%%%%%%%%%%%%%%%%%%%%%%%%%%%%%%%%%%%%%%%%%%%%%%%%%%%%%%%%%%%%%%%%%%%%%%
\subsection{From exact recovery to testing multivariate Poisson distributions} \label{sec:exact} %%%
%%%%%%%%%%%%%%%%%%%%%%%%%%%%%%%%%%%%%%%%%%%%%%%%%%%%%%%%%%%%%%%%%%%%%%%%%%%%%%%%%%%%%%%%%%%%%%%%%%%%

Recall that we are interested in the logarithmic degree regime for exact recovery, i.e., we consider $G \sim \SBM( n, p, \ln(n) Q / n )$, where $Q \in \R_{+}^{k \times k}$ is independent of $n$. We also assume that the communities have linear size, i.e., that $p$ is independent of $n$, and $p_i \in (0,1)$ for all $i$. Our goal is to recover the labels of all the vertices whp.

As a thought experiment, imagine that not only is the graph $G$ given, but also all vertex labels are revealed, except for that of a given vertex $v \in V$. 
Is it possible to determine the label of $v$? 

\begin{figure}[h!]
 \centering
 \includegraphics[width=0.55\textwidth]{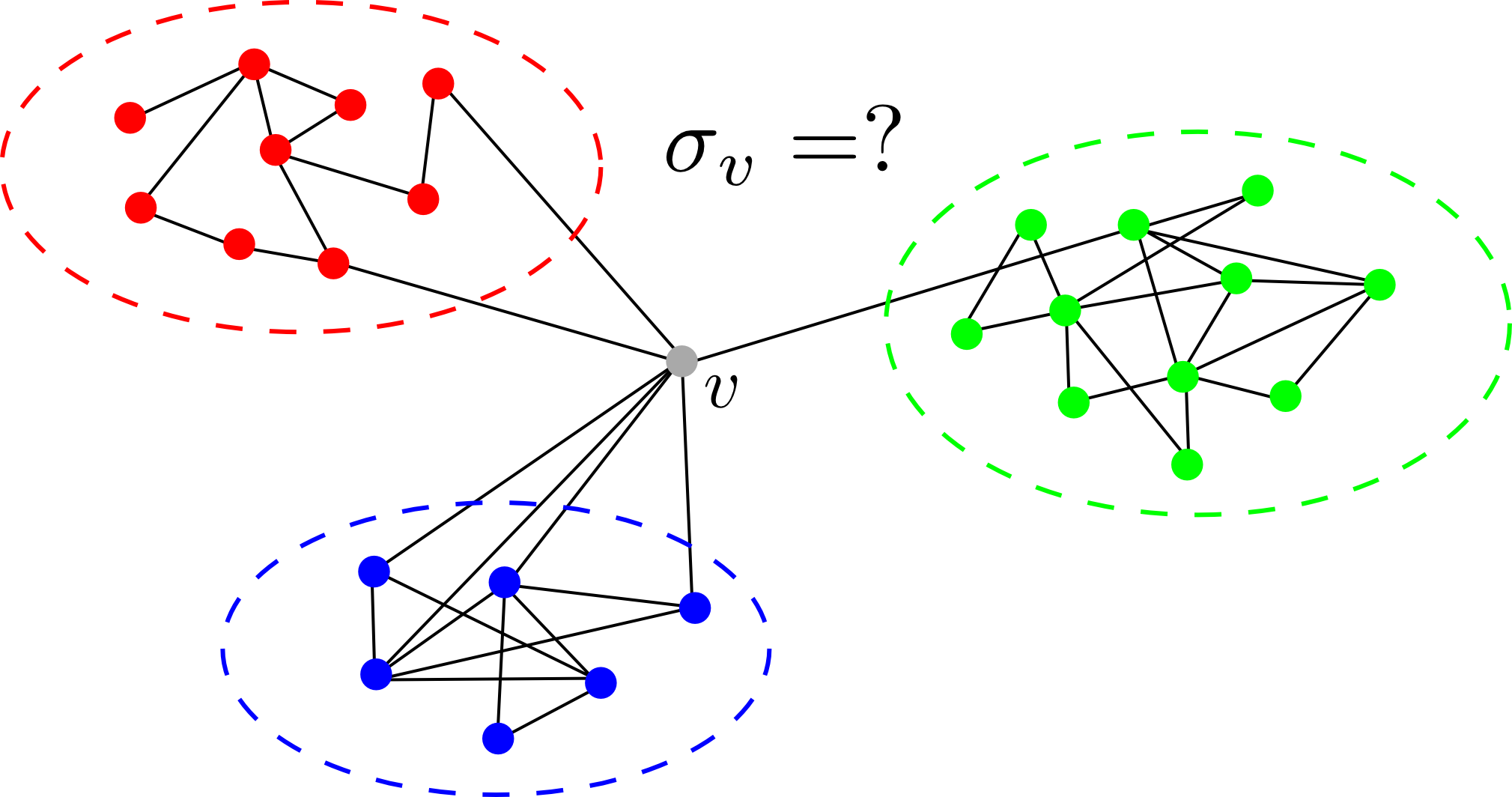}
 \caption{Suppose all community labels are known except that of vertex $v$. Can the label of $v$ be determined based on its neighbors' labels?}
 \label{fig:sbm_thought_experiment}
\end{figure}

Understanding this question is key for understanding exact recovery, 
since if the error probability of this is too high, then exact recovery will not be possible. 
On the other hand, it turns out that in this regime it is possible to recover all but $o(n)$ labels using an initial partial recovery algorithm. 
The setup of the thought experiment then becomes relevant, and if we can determine the label of $v$ given the labels of all the other nodes with low error probability, then we can correct all errors made in the initial partial recovery algorithm, leading to exact recovery. 
We will come back to the connection between the thought experiment and exact recovery; for now we focus on understanding this thought experiment.

Given the labels of all vertices except $v$, the information we have about $v$ is the number of nodes in each community it is connected to. 
In other words, we know the \emph{degree profile} $d(v)$ of $v$, where, for a given labelling of the graph's vertices, the $i$-th component $d_{i} (v)$ is the number of edges between $v$ and the vertices in community $i$. 

The distribution of the degree profile $d(v)$ depends on the community that $v$ belongs to. 
Recall that the community sizes are given by a multinomial distribution with parameters $n$ and $p$, 
and hence the relative size of community $i \in [k]$ concentrates on $p_i$. 
Thus if $\sigma_v = j$, the degree profile $d(v) = (d_1(v), \dots, d_k(v))$ can be approximated by independent binomials, with 
$d_i (v)$ approximately distributed as 
$\Bin \left( np_i, \ln(n) Q_{i,j} / n \right)$, 
where $\Bin(m,q)$ denotes the binomial distribution with $m$ trials and success probability $q$. 
In this regime, the binomial distribution is well-approximated by a Poisson distribution of the same mean. 
In particular, Le Cam's inequality gives that 
\[
 \TV \left( \Bin \left( na, \ln(n) b / n \right), \Poi \left( ab \ln (n) \right) \right) \leq \frac{2 a b^2 \left( \ln(n) \right)^2}{n},
\]
where $\Poi \left( \lambda \right)$ denotes the Poisson distribution with mean $\lambda$, 
and $\TV$ denotes the total variation distance\footnote{Recall that the total variation distance between two random variables $X$ and $Y$ taking values in a finite space $\mathcal{X}$ with laws $\mu$ and $\nu$ is defined as 
$\TV \left( \mu, \nu \right) \equiv \TV \left(X,Y \right) = \frac{1}{2} \sum_{x\in \mathcal{X}} \left| \mu \left( x \right) - \nu \left( x \right) \right| = \sup_{A} \left| \mu \left( A \right) - \nu \left( A \right) \right|$.}. 
Using the additivity of the Poisson distribution and the triangle inequality, we get that 
\[
 \TV \left( \cL \left( d(v) \right), \Poi \left( \ln \left( n \right) \sum_{i \in [k]} p_i Q_{i,j} e_i \right) \right) = O \left( \frac{\left( \ln \left( n \right) \right)^2}{n} \right),
\]
where $\cL \left( d(v) \right)$ denotes the law of $d(v)$ conditionally on $\sigma_{v} = j$ and $e_i$ is the $i$-th unit vector.

Thus the degree profile of a vertex in community $j$ is approximately Poisson distributed with mean $\ln \left( n \right) \sum_{i \in [k]} p_i Q_{i,j} e_i$. 
Defining $P = \diag(p)$, this can be abbreviated as $\ln \left( n \right) \left( PQ \right)_{j}$, where $\left( PQ \right)_{j}$ denotes the $j$-th column of the matrix $PQ$. 
We call the quantity $\left( PQ \right)_{j}$ the \emph{community profile} of community $j$; this is the quantity that determines the distribution of the degree profile of vertices from a given community.

Our thought experiment has thus been reduced to a Bayesian hypothesis testing problem between $k$ multivariate Poisson distributions. 
The prior on the label of $v$ is given by $p$, and we get to observe the degree profile $d(v)$, which comes from one of $k$ multivariate Poisson distributions, which have mean $\ln(n)$ times the community profiles $\left( PQ \right)_j$, $j \in [k]$.

%%%%%%%%%%%%%%%%%%%%%%%%%%%%%%%%%%%%%%%%%%%%%%%%%%%%%%%%%%%%%%%%%%%%%%%%%%%%%%%
\subsection{Testing multivariate Poisson distributions} \label{sec:Poisson} %%%
%%%%%%%%%%%%%%%%%%%%%%%%%%%%%%%%%%%%%%%%%%%%%%%%%%%%%%%%%%%%%%%%%%%%%%%%%%%%%%%

We now turn to understanding the testing problem described above; the setup is as follows. 
We consider a Bayesian hypothesis testing problem with $k$ hypotheses. 
The random variable $H$ takes values in $[k]$ with prior given by $p$, i.e., $\p \left( H = j \right) = p_j$. 
We do not observe $H$, but instead we observe a draw from a multivariate Poisson distribution whose mean depends on the realization of $H$: 
given $H = j$, the mean is $\lambda(j) \in \R_{+}^{k}$. 
In short:
\[
 D \, | \, H = j \sim \Poi \left( \lambda(j) \right), \qquad j \in [k].
\]
In more detail: 
\[
\p \left( D = d \, \middle| \, H = j \right) = \cP_{\lambda(j)} \left( d \right), \qquad d \in \Z_{+}^{k}, 
\]
where 
\[
 \cP_{\lambda(j)} \left( d \right) = \prod_{i \in [k]} \cP_{\lambda_{i}(j)} \left( d_{i} \right)
\]
and 
\[
 \cP_{\lambda_{i}(j)} \left( d_{i} \right) = \frac{\lambda_{i}(j)^{d_{i}}}{d_{i}!} e^{-\lambda_{i}(j)}.
\]
Our goal is to infer the value of $H$ from a realization of $D$. 
The error probability is minimized by the maximum a posteriori (MAP) rule, which, upon observing $D = d$, selects 
\[
 \argmax_{j \in [k]} \p \left( D = d \, \middle| \, H = j \right) p_j
\]
as an estimate for the value of $H$, with ties broken arbitrarily. 
Let $P_e$ denote the error of the MAP estimator. 
One can think of the MAP estimator as a tournament of $k-1$ pairwise comparisons of the hypotheses: 
if 
$
\p \left( D = d \, \middle| \, H = i \right) p_i 
> 
\p \left( D = d \, \middle| \, H = j \right) p_j
$
then the MAP estimate is not $j$. 
The probability that one makes an error during such a comparison is exactly
\begin{equation}\label{eq:error_ij}
 P_{e} \left( i, j \right) := \sum_{x \in \Z_{+}^{k}} \min \left\{ \p \left( D = x \, \middle| \, H = i \right) p_i , \p \left( D = x \, \middle| \, H = j \right) p_j \right\}.
\end{equation}
For finite $k$, the error of the MAP estimator is on the same order as the largest pairwise comparison error, i.e., $\max_{i,j} P_{e} \left( i, j \right)$. 
In particular, we have that 
\begin{equation}\label{eq:P_e}
\frac{1}{k-1} \sum_{i < j} P_{e} \left( i, j \right) 
\leq 
P_{e} 
\leq 
\sum_{i < j} P_{e} \left( i, j \right). 
\end{equation}
\begin{exercise}\label{ex:P_e_easy}
 Show~\eqref{eq:P_e}.
\end{exercise}
Thus we desire to understand the magnitude of the error probability $P_{e} \left( i, j \right)$ in~\eqref{eq:error_ij} in the particular case when the conditional distribution of $D$ given $H$ is a multivariate Poisson distribution with mean vector on the order of $\ln \left( n \right)$. The following result determines this error up to first order in the exponent. 
\begin{lemma}[Abbe and Sandon~\cite{AbbeSandon15}]\label{lem:D_+}%[Abbe and Sandon~\cite[Lemma~11]{AbbeSandon15}]
 For any $c_{1}, c_{2} \in \left( 0, \infty \right)^{k}$ with $c_1 \neq c_2$ and $p_1, p_2 > 0$, we have 
\begin{align}
\sum_{x \in \Z_{+}^{k}} \min \left\{ \cP_{\ln \left( n \right) c_{1}} \left( x \right) p_{1}, \cP_{\ln \left( n \right) c_{2}} \left( x \right) p_{2} \right\} &= O \left( n^{-D_{+} \left( c_1, c_2 \right) - \tfrac{\ln \ln \left( n \right)}{2 \ln \left( n \right)}} \right), \label{eq:UB}\\
\sum_{x \in \Z_{+}^{k}} \min \left\{ \cP_{\ln \left( n \right) c_{1}} \left( x \right) p_{1}, \cP_{\ln \left( n \right) c_{2}} \left( x \right) p_{2} \right\} &= \Omega \left( n^{-D_{+} \left( c_1, c_2 \right) - \tfrac{k \ln \ln \left( n \right)}{2 \ln \left( n \right)}} \right), \label{eq:LB}
\end{align}
where 
\begin{equation}\label{eq:D_+}
D_{+} \left( c_{1}, c_{2} \right) 
= \max_{t \in \left[ 0, 1 \right]} \sum_{i \in \left[ k \right]} \left( t c_{1} \left( i \right) + \left( 1 - t \right) c_{2} \left( i \right) - c_{1} \left( i \right)^{t} c_{2} \left( i \right)^{1-t} \right).
\end{equation}
\end{lemma}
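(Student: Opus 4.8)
The plan is to read the sum as the overlap of two product Poisson laws, $Q_1:=\cP_{\ln(n)c_1}$ and $Q_2:=\cP_{\ln(n)c_2}$ (weighted by the priors $p_1,p_2$), and to prove a Bahadur--Rao-type sharpening of the Chernoff bound: the rough exponential rate is $n^{-D_+(c_1,c_2)}$, and the displayed factor $n^{-\tfrac{\ln\ln n}{2\ln n}}=(\ln n)^{-1/2}$ is precisely the polynomial correction produced by a local analysis at the optimal tilt. First I would check that the optimizer is interior. Writing $\phi(t)=\sum_{i\in[k]}\big(tc_1(i)+(1-t)c_2(i)-c_1(i)^{t}c_2(i)^{1-t}\big)$, each summand is concave in $t$ (an affine term minus the log-linear, hence convex, map $t\mapsto c_1(i)^{t}c_2(i)^{1-t}$), so $\phi$ is smooth and concave with $\phi(0)=\phi(1)=0$; since $c_1\neq c_2$, strict convexity in the coordinates where $c_1(i)\neq c_2(i)$ forces $\phi\not\equiv 0$, so $D_+(c_1,c_2)=\phi(t^\ast)$ for some $t^\ast\in(0,1)$ with $\phi'(t^\ast)=0$.

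Next I would perform an exact change of measure. A direct computation with the Poisson pmf gives, for every $x\in\Z_+^{k}$,
\[
Q_1(x)^{t^\ast}Q_2(x)^{1-t^\ast}=n^{-D_+(c_1,c_2)}\,\cP_m(x),\qquad m_i:=\ln(n)\,c_1(i)^{t^\ast}c_2(i)^{1-t^\ast}=\Theta(\ln n),
\]
where $\cP_m$ is again a genuine product Poisson pmf; in particular $\sum_x Q_1^{t^\ast}Q_2^{1-t^\ast}=n^{-D_+}$. Setting $L(x):=\ln\frac{Q_1(x)}{Q_2(x)}=\sum_{i\in[k]}a_ix_i-b$ with $a_i=\ln\frac{c_1(i)}{c_2(i)}$ and $b=\ln(n)\sum_i(c_1(i)-c_2(i))$, one more line of algebra yields
\[
\min\{Q_1(x)p_1,Q_2(x)p_2\}=n^{-D_+}\cP_m(x)\,g\big(L(x)\big),\qquad g(\ell):=\min\{p_1e^{(1-t^\ast)\ell},\,p_2e^{-t^\ast\ell}\},
\]
so the sum in the lemma equals $S=n^{-D_+}\,\E_{X\sim\cP_m}\!\big[g(L(X))\big]$. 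It thus remains to show $\E_{X\sim\cP_m}[g(L(X))]=O((\ln n)^{-1/2})$ and $=\Omega((\ln n)^{-k/2})$. Two facts set this up: $g$ is bounded, unimodal with peak $p_1^{t^\ast}p_2^{1-t^\ast}$ at $\ell_0=\ln(p_2/p_1)$, and satisfies $g(\ell)\le p_1^{t^\ast}p_2^{1-t^\ast}e^{-\min(t^\ast,1-t^\ast)|\ell-\ell_0|}$; and under $X\sim\cP_m$ the variable $L(X)$ is a weighted sum of independent Poissons with $\E[L(X)]=\sum_i a_im_i-b=0$ (this identity is exactly $\phi'(t^\ast)=0$) and $\Var(L(X))=\sum_i a_i^2m_i=\Theta(\ln n)$, positive since $a_i\neq0$ for at least one $i$.

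For the upper bound I would use anti-concentration of $L(X)$. Fix a coordinate $i_0$ with $a_{i_0}\neq0$ and condition on $(X_i)_{i\neq i_0}$; then $L(X)$ is an affine image of $X_{i_0}\sim\Poi(m_{i_0})$. Since $\max_j\p(\Poi(\lambda)=j)=O(\lambda^{-1/2})$ by Stirling at the mode, every interval of length $1/|a_{i_0}|$ carries $\Poi(m_{i_0})$-mass $O(m_{i_0}^{-1/2})$, so $\sup_{z}\p\big(L(X)\in[z,z+1]\big)=O((\ln n)^{-1/2})$ uniformly. Slicing $\R$ into unit intervals near $\ell_0$ and summing the resulting geometric series via the exponential decay of $g$,
\[
\E_{X\sim\cP_m}\!\big[g(L(X))\big]\le\sum_{j\in\Z}\Big(\sup_{|\ell-\ell_0-j|\le1}g(\ell)\Big)\,\p\big(L(X)\in[\ell_0+j-1,\ell_0+j+1]\big)=O\big((\ln n)^{-1/2}\big),
\]
which is \eqref{eq:UB}. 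For the lower bound I would instead exhibit one near-optimal lattice point: since $\sum_i a_im_i=b$ we have $L(\lfloor m\rfloor)=-\sum_i a_i(m_i-\lfloor m_i\rfloor)=O(1)$, and moving $x_{i_0}$ from $\lfloor m_{i_0}\rfloor$ by a bounded integer amount (each step changes $L$ by $a_{i_0}\neq0$) produces $x^\ast$ with $x^\ast_i=\lfloor m_i\rfloor$ for $i\neq i_0$ and $L(x^\ast)-\ell_0$ bounded, so $Q_1(x^\ast)p_1$ and $Q_2(x^\ast)p_2$ agree up to a constant factor. Then $S\ge\min\{Q_1(x^\ast)p_1,Q_2(x^\ast)p_2\}=\Theta\big(Q_1(x^\ast)^{t^\ast}Q_2(x^\ast)^{1-t^\ast}\big)=\Theta\big(n^{-D_+}\cP_m(x^\ast)\big)$, and since $x^\ast_i$ is within $O(1)$ of $m_i=\Theta(\ln n)$, Stirling gives $\cP_m(x^\ast)=\prod_i\Theta(m_i^{-1/2})=\Theta((\ln n)^{-k/2})$, which is \eqref{eq:LB}.

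The only genuinely non-formal step is the anti-concentration estimate $\sup_z\p(L(X)\in[z,z+1])=O((\ln n)^{-1/2})$: this is what upgrades the crude Chernoff bound $S\le p_1^{t^\ast}p_2^{1-t^\ast}n^{-D_+}$ to \eqref{eq:UB}, and it must be handled with some care because $L(X)$ lives on a possibly incommensurable lattice; reducing to a single ``fat'' Poisson coordinate, as above, sidesteps a full $k$-dimensional local central limit theorem. That same missing local CLT is the source of the $(\ln n)^{(k-1)/2}$ gap between \eqref{eq:UB} and \eqref{eq:LB} — replacing it with the single-point bound of the last step is lossy but entirely sufficient for the exact-recovery application.
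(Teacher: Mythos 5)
Your proof is correct, and the paper itself does not prove this lemma---it gives only a brief univariate intuition (identify the mode $x_{\mathrm{max}}\approx\lambda^{t^\ast}\mu^{1-t^\ast}$, estimate the summand there by Stirling, observe that the lower bound is the single term at the mode and the upper bound requires controlling the tails) and then leaves the details as an exercise. Your argument is a faithful and complete formalization of exactly that sketch: the tilted measure $\cP_m$ with $m_i=\ln(n)\,c_1(i)^{t^\ast}c_2(i)^{1-t^\ast}$ is the multivariate analogue of the paper's $x_{\mathrm{max}}$, the identity $Q_1^{t^\ast}Q_2^{1-t^\ast}=n^{-D_+}\cP_m$ cleanly extracts the exponential rate, the single lattice point $x^\ast\approx m$ plus Stirling recovers the paper's lower-bound idea and produces the $(\ln n)^{-k/2}$ factor, and the anti-concentration estimate $\sup_z\P(L(X)\in[z,z+1])=O((\ln n)^{-1/2})$---obtained by isolating one Poisson coordinate with $a_{i_0}\neq0$---is precisely the missing ingredient that makes rigorous the paper's vague instruction to ``show that the other terms do not contribute much more.'' The computations check out: concavity of $\phi$ with $\phi(0)=\phi(1)=0$ and $c_1\neq c_2$ forces an interior maximizer $t^\ast$; the centering $\E_m[L(X)]=0$ is exactly $\phi'(t^\ast)=0$; and the geometric-series bound using the exponential decay of $g$ is handled correctly. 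The only difference from the paper is one of presentation, not substance: you organize the computation as an exponential change of measure followed by a Bahadur--Rao-type local analysis, whereas the paper phrases it as a direct term-by-term estimate near the mode---but these are the same argument. One small remark: your final step of perturbing $x_{i_0}$ to tune $L(x^\ast)$ near $\ell_0$ is not actually needed, since $|L(\lfloor m\rfloor)-\ell_0|\le\sum_i|a_i|+|\ln(p_2/p_1)|$ is already an $n$-independent constant and $g$ at that point is bounded below by a positive constant anyway.
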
 
\begin{figure}[b!]
 \centering
 \includegraphics[width=0.55\textwidth]{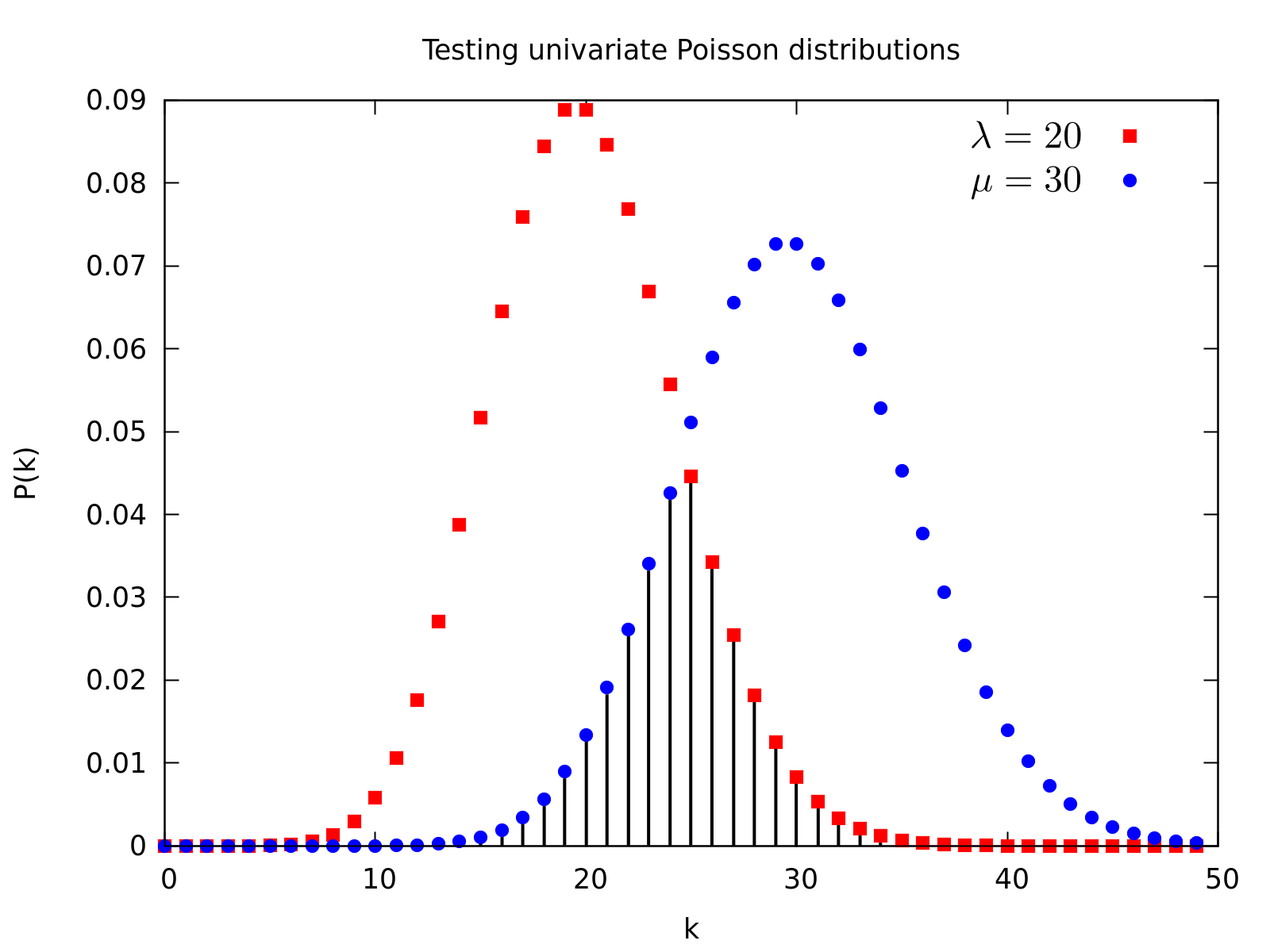}
 \caption{Testing univariate Poisson distributions. The figure plots the probability mass function of two Poisson distributions, with means $\lambda = 20$ and $\mu = 30$, respectively.}
 \label{fig:poisson}
\end{figure}
We do not go over the proof of this statement---which we leave to the reader as a challenging exercise---but we provide some intuition in the univariate case. 
Figure~\ref{fig:poisson} illustrates the probability mass function of two Poisson distributions, with means $\lambda = 20$ and $\mu = 30$, respectively. 
Observe that 
$\min \left\{ \mathcal{P}_{\lambda} \left( x \right), \mathcal{P}_{\mu} \left( x \right) \right\}$ 
decays rapidly away from 
$x_{\mathrm{max}} := \argmax_{x \in \mathbb{Z}_{+}} \min \left\{ \mathcal{P}_{\lambda} \left( x \right), \mathcal{P}_{\mu} \left( x \right) \right\}$,
so we can obtain a good estimate of the sum
$\sum_{x \in \mathbb{Z}_{+}} \min \left\{ \mathcal{P}_{\lambda} \left( x \right), \mathcal{P}_{\mu} \left( x \right) \right\}$ 
by simply estimating the term
$\min \left\{ \mathcal{P}_{\lambda} \left( x_{\mathrm{max}} \right), \mathcal{P}_{\mu} \left( x_{\mathrm{max}} \right) \right\}$. 
Now observe that $x_{\mathrm{max}}$ must satisfy 
$\mathcal{P}_{\lambda} \left( x_{\mathrm{max}} \right) \approx \mathcal{P}_{\mu} \left( x_{\mathrm{max}} \right)$; 
after some algebra this is equivalent to 
$x_{\mathrm{max}} \approx \frac{\lambda - \mu}{\log \left( \lambda / \mu \right)}$. 
Let $t^{*}$ denote the maximizer in the expression of $D_{+} \left( \lambda, \mu \right)$ in~\eqref{eq:D_+}. 
By differentiating in $t$, we obtain that $t^{*}$ satisfies 
$\lambda - \mu - \log \left( \lambda / \mu \right) \cdot \lambda^{t^{*}} \mu^{1 - t^{*}} = 0$, 
and so 
$\lambda^{t^{*}} \mu^{1 - t^{*}} = \frac{\lambda - \mu}{\log \left( \lambda / \mu \right)}$. 
Thus we see that 
$x_{\mathrm{max}} \approx \lambda^{t^{*}} \mu^{1 - t^{*}}$, 
from which, after some algebra, we get that 
$\mathcal{P}_{\lambda} \left( x_{\mathrm{max}} \right) \approx \mathcal{P}_{\mu} \left( x_{\mathrm{max}} \right) 
\approx \exp \left( - D_{+} \left( \lambda, \mu \right) \right)$.

The proof of~\eqref{eq:LB} in the multivariate case follows along the same lines: 
the single term corresponding to 
$x_{\mathrm{max}} := \argmax_{x \in \mathbb{Z}_{+}^{k}} \min \left\{ \mathcal{P}_{\ln \left( n \right) c_{1}} \left( x \right), \mathcal{P}_{\ln \left( n \right) c_{2}} \left( x \right) \right\}$ 
gives the lower bound. 
For the upper bound of~\eqref{eq:UB} one has to show that the other terms do not contribute much more. 
\begin{exercise}
 Prove Lemma~\ref{lem:D_+}.
\end{exercise}
Our conclusion is thus that the error exponent in testing multivariate Poisson distributions is given by the explicit quantity $D_{+}$ in~\eqref{eq:D_+}. 
The discussion in Section~\ref{sec:exact} then implies that $D_{+}$ plays an important role in the threshold for exact recovery. 
In particular, it intuitively follows from Lemma~\ref{lem:D_+} that a necessary condition for exact recovery should be that
\[
\min_{i,j \in [k], i \neq j} D_{+} \left( \left( PQ \right)_{i}, \left( PQ \right)_{j} \right) \geq 1.
\]
Suppose on the contrary that 
$D_{+} \left( \left( PQ \right)_{i}, \left( PQ \right)_{j} \right) < 1$ 
for some $i$ and $j$. 
This implies that the error probability in the testing problem is $\Omega \left( n^{\eps - 1} \right)$ for some $\eps > 0$ 
for all vertices in communities $i$ and $j$. 
Since the number of vertices in these communities is linear in $n$, 
and most of the hypothesis testing problems are approximately independent, 
one expects there to be no error in the testing problems 
with probability at most 
$\left( 1 - \Omega \left( n^{\eps - 1} \right) \right)^{\Omega \left( n \right)} = \exp\left( - \Omega \left( n^{\eps} \right) \right) = o(1)$.

%%%%%%%%%%%%%%%%%%%%%%%%%%%%%%%%%%%%%%%%%%%%%%%%%%%%%%%%%%%%%%%%
\subsection{Chernoff-Hellinger divergence} \label{sec:CHdiv} %%%
%%%%%%%%%%%%%%%%%%%%%%%%%%%%%%%%%%%%%%%%%%%%%%%%%%%%%%%%%%%%%%%%

Before moving on to the threshold for exact recovery in the general SBM, we discuss connections of $D_{+}$ to other, well-known measures of divergence. 
Writing 
\[
D_{t} \left( \mu, \nu \right) := \sum_{x \in \left[ k \right]} \left( t \mu \left( x \right) + \left( 1 - t \right) \nu \left( x \right) - \mu\left( x \right)^{t} \nu\left( x \right)^{1-t} \right)
\]
we have that 
\[
 D_{+} \left( \mu, \nu \right) = \max_{t \in \left[ 0, 1 \right]} D_{t} \left( \mu, \nu \right).
\]
For any fixed $t$, $D_{t}$ can be written as
\[
 D_{t} \left( \mu, \nu \right) = \sum_{x \in \left[ k \right]} \nu \left( x \right) f_{t} \left( \frac{\mu\left( x \right)}{\nu \left( x \right)} \right),
\]
where $f_{t} \left( x \right) = 1 - t + t x - x^{t}$, which is a convex function. 
Thus $D_{t}$ is an \emph{$f$-divergence}, part of a family of divergences that generalize the Kullback-Leibler (KL) divergence (also known as relative entropy), which is obtained for $f(x) = x \ln(x)$. 
The family of $f$-divergences with convex $f$ share many useful properties, and hence have been widely studied in information theory and statistics. 
The special case of 
$D_{1/2} \left( \mu, \nu \right) = \tfrac{1}{2} \left\| \sqrt{\mu} - \sqrt{\nu} \right\|_{2}^{2}$ 
is known as the Hellinger divergence. 
The Chernoff divergence is defined as 
$C_{*} \left( \mu, \nu \right) 
= \max_{t \in (0,1)} - \log \sum_{x} \mu(x)^{t} \nu(x)^{1-t}$, 
and so if $\mu$ and $\nu$ are probability vectors, then 
$D_{+} \left( \mu, \nu \right) = 1 - e^{-C_{*} \left( \mu, \nu \right)}$. 
Because of these connections, Abbe and Sandon termed $D_{+}$ the \emph{Chernoff-Hellinger divergence}.

While the quantity $D_{+}$ still might seem mysterious, even in light of these connections, 
a useful point of view is that Lemma~\ref{lem:D_+} gives $D_{+}$ an \emph{operational meaning}.

%%%%%%%%%%%%%%%%%%%%%%%%%%%%%%%%%%%%%%%%%%%%%%%%%%%%%%%%%%%%%%%%%%%%%%%%%%%%%%%%%%%%%%%%%%%%%%%%
\subsection{Characterizing exact recoverability using CH-divergence} \label{sec:exact_CHdiv} %%%
%%%%%%%%%%%%%%%%%%%%%%%%%%%%%%%%%%%%%%%%%%%%%%%%%%%%%%%%%%%%%%%%%%%%%%%%%%%%%%%%%%%%%%%%%%%%%%%%

Going back to the exact recovery problem in the general SBM, let us jump right in and state the recoverability threshold of Abbe and Sandon:
exact recovery in $\SBM(n,p, \ln(n) Q / n)$ is possible if and only if the CH-divergence between all pairs of community profiles is at least $1$. 

\begin{theorem}[Abbe and Sandon~\cite{AbbeSandon15}]\label{thm:exact}
Let $k \in \Z_{+}$ denote the number of communities, 
let $p \in (0,1)^{k}$ with $\left\| p \right\|_{1} = 1$ denote the community prior, 
let $P = \diag(p)$, 
and let $Q \in \left( 0, \infty \right)^{k \times k}$ be a symmetric $k \times k$ matrix with no two rows equal. 
Exact recovery is solvable in $\SBM \left( n, p, \ln(n) Q / n \right)$ if and only if 
\begin{equation}\label{eq:exact_condition}
\min_{i,j \in [k], i \neq j} D_{+} \left( \left( PQ \right)_{i}, \left( PQ \right)_{j} \right) \geq 1. 
\end{equation}
\end{theorem}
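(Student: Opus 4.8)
The plan is to prove the two directions separately, in both cases reducing to the multivariate Poisson testing problem analyzed in Lemma~\ref{lem:D_+} via the degree-profile heuristic developed in Section~\ref{sec:exact}. Throughout, the key structural fact is that conditionally on the community assignment $\sigma$, the degree profile $d(v)$ of a vertex $v$ with $\sigma_v = j$ is, up to a total variation error of $O((\ln n)^2 / n)$, distributed as $\Poi(\ln(n)(PQ)_j)$, and moreover the degree profiles of different vertices are nearly independent (they depend on disjoint sets of edges except for the single edge between any two given vertices, which is negligible).

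\emph{Impossibility (the ``only if'' direction).} Suppose $D_+((PQ)_i,(PQ)_j) < 1$ for some $i \neq j$; I will show exact recovery fails whp. Condition on a typical realization of $\sigma$ in which communities $i$ and $j$ both have $\Theta(n)$ vertices. For any fixed candidate estimator, consider the ``genie-aided'' lower bound: reveal to the algorithm the labels of \emph{all} vertices except one vertex $v \in V$ known to lie in community $i$ or $j$, with prior proportional to $(p_i, p_j)$. The algorithm's best possible strategy is then the MAP test between $\Poi(\ln(n)(PQ)_i)$ and $\Poi(\ln(n)(PQ)_j)$ on the observed degree profile, whose error is $\Omega(n^{-D_+((PQ)_i,(PQ)_j) - o(1)}) = \Omega(n^{\eps-1})$ for some $\eps > 0$ by the lower bound~\eqref{eq:LB} of Lemma~\ref{lem:D_+} (with the $o(1)$ absorbing the $\frac{k\ln\ln n}{2\ln n}$ term and the $O((\ln n)^2/n)$ Poissonization error). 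Since exact recovery requires classifying \emph{every} such $v$ correctly, and since the $\Theta(n)$ relevant single-vertex tests are asymptotically independent (each sees an essentially disjoint neighborhood), a union/second-moment argument shows the probability that all of them succeed is at most $(1 - \Omega(n^{\eps-1}))^{\Omega(n)} = \exp(-\Omega(n^\eps)) = o(1)$. Making the ``asymptotically independent'' step rigorous — e.g. by a careful conditioning or by exhibiting $\Theta(n^{\eps/2})$ vertices whose neighborhoods are genuinely disjoint and applying independence there — is the main technical care-point of this direction.

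\emph{Achievability (the ``if'' direction).} Assume~\eqref{eq:exact_condition} holds with room to spare, i.e. $D_+((PQ)_i,(PQ)_j) \geq 1$ for all $i \neq j$ (the boundary case $=1$ requires extra care and a slightly more delicate argument). The algorithm has two phases. \textbf{Phase 1 (almost exact recovery):} run a known partial-recovery procedure — e.g. spectral methods or the Sphere-comparison / degree-profiling algorithm of Abbe--Sandon — which in the logarithmic-degree regime correctly labels all but $o(n)$ vertices whp, and moreover does so with a preliminary estimate $\wh\sigma$ that also yields consistent estimates $\wh p, \wh Q$ of the parameters. \textbf{Phase 2 (local refinement):} for each vertex $v$, recompute its degree profile $d(v)$ using the Phase-1 labels of its neighbors, and reassign $v$ via the MAP rule for the multivariate Poisson test with the estimated community profiles $(\wh P \wh Q)_\ell$, $\ell \in [k]$. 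The analysis proceeds as follows: (a) because only $o(n)$ vertices are mislabeled after Phase 1, for all but $o(n)$ vertices the degree profile computed from Phase-1 labels agrees with the true degree profile, so the test in Phase 2 is effectively the idealized Poisson test; (b) by the upper bound~\eqref{eq:UB} of Lemma~\ref{lem:D_+}, combined with~\eqref{eq:P_e}, each individual refinement step errs with probability $O(n^{-D_+ - o(1)}) = O(n^{-1+o(1)})$; hence the expected number of vertices mislabeled by Phase 2 is $n \cdot O(n^{-1+o(1)}) = o(1)$... but to kill the $o(1)$-in-the-exponent slack one needs the strict inequality, or a sharper version of Lemma~\ref{lem:D_+} at the boundary, which is exactly where the genuine difficulty lies. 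A union bound over all $n$ vertices then gives zero errors whp, i.e. exact recovery.

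\emph{Main obstacle.} The crux in both directions is handling the \textbf{boundary case} $\min_{i\neq j} D_+((PQ)_i,(PQ)_j) = 1$ and, relatedly, the gap between the exponents $-D_+ - \frac{\ln\ln n}{2\ln n}$ and $-D_+ - \frac{k\ln\ln n}{2\ln n}$ in Lemma~\ref{lem:D_+}: a naive union bound over $n$ vertices needs the per-vertex error to be $o(1/n)$, i.e. strictly better than $n^{-1}$, whereas at threshold the error is $n^{-1+o(1)}$. Resolving this requires either (i) a refined large-deviations estimate pinning down the polylogarithmic correction in the Poisson test error, or (ii) a more clever argument showing that the \emph{number} of residual errors is $o(1)$ in expectation with a genuinely summable tail, exploiting near-independence of the per-vertex tests. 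The rest — Phase 1 guarantees, Poissonization, parameter estimation — is comparatively routine given the machinery already assembled.
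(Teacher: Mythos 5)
Your overall plan --- reduce both directions to the multivariate Poisson test of Lemma~\ref{lem:D_+} --- is the same as the paper's, and the Phase~1/Phase~2 skeleton of the achievability argument is on the right track. However, there are genuine gaps in both directions, and you have also overlooked that the tool needed to handle the boundary case is already provided.

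\emph{Impossibility.} The product-bound heuristic $(1-\Omega(n^{\eps-1}))^{\Omega(n)}$ does not hold: the events ``the algorithm errs on vertex $v$'' are deterministic functions of a single graph, so they are not independent Bernoulli trials, and finding $\Theta(n^{\eps/2})$ vertices with disjoint neighborhoods does not fix this. What is actually needed, and what the paper uses, is an \emph{exchangeability/indistinguishability} argument, not an independence argument. The paper identifies a specific degree profile $x_\ell = \lfloor (PQ)_{\ell,i}^{t^*}(PQ)_{\ell,j}^{1-t^*}\ln n \rfloor$ that is $\Omega(n^{\eps-1})$-likely for vertices in \emph{both} communities $i$ and $j$ (this is where the precise structure of the maximizer $t^*$ in Lemma~\ref{lem:D_+} enters, not just the value of the exponent). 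To decouple, it samples a random set $S$ of $n/(\ln n)^3$ vertices and calls an $S$-vertex \emph{ambiguous} if its degree profile restricted to $V\setminus S$ is exactly $x$; for fixed $\sigma$ and $S$ these events are genuinely independent across $S$. Whp one obtains $\geq\ln n$ ambiguous, $S$-isolated vertices in each of communities $i$ and $j$, and crucially these $2\ln n$ vertices are \emph{exchangeable given the graph}: swapping labels between an ambiguous $i$-vertex and an ambiguous $j$-vertex preserves the conditional law. Hence no algorithm classifies all of them correctly with probability exceeding $1/\binom{2\ln n}{\ln n}\to 0$. Your sketch is missing the ambiguity construction and the symmetry step; without them the ``union/second-moment argument'' does not close.

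\emph{Achievability.} Your step~(a) is incorrect: with $o(n)$ Phase-1 errors and average degree $\Theta(\ln n)$, the number of \emph{vertex-to-misclassified-neighbor} incidences is $o(n\ln n)$, so the degree profile computed from Phase-1 labels can be corrupted at nearly all vertices, not just $o(n)$ of them. The paper instead quantifies the damage multiplicatively: if neighbors are mislabeled with rate $x$, the per-vertex error of the degree-profiling step is at most $n^{\delta x}$ times the idealized Poisson-test error, which with a constant Phase-1 error rate can inflate the bound by $n^c$. The fix is \emph{two} rounds of degree-profiling: after the first round the per-vertex error rate drops to $O(n^{-c'})$, and the second round then has inflation factor $n^{\delta\cdot O(n^{-c'})}=1+o(1)$, giving per-vertex error $o(1/n)$ and a clean union bound. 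Your one-round scheme does not control this blow-up.

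\emph{Boundary case.} The refined estimate you ask for in~(i) is already in Lemma~\ref{lem:D_+}: the upper bound~\eqref{eq:UB} reads $O\bigl(n^{-D_+ - \frac{\ln\ln n}{2\ln n}}\bigr) = O\bigl(n^{-D_+}/\sqrt{\ln n}\bigr)$, which is $o(1/n)$ whenever $D_+\geq 1$, including at equality. So the $-\tfrac{\ln\ln n}{2\ln n}$ correction, which you flagged as an unresolved obstacle, is precisely what makes the union bound go through at the threshold.
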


This theorem thus provides an operational meaning to the CH-divergence for the community recovery problem.

\begin{example}[Symmetric communities]
Consider again $k$ symmetric communities, that is, 
$p_i = 1/k$ for all $i \in [k]$, 
$Q_{i,j} = a$ if $i = j$, 
and $Q_{i,j} = b$ otherwise, with $a, b > 0$. 
Then exact recovery is solvable in $\SBM \left( n, p, \ln(n) Q / n \right)$ if and only if 
\begin{equation}\label{eq:exact_symm}
 \left| \sqrt{a} - \sqrt{b} \right| \geq \sqrt{k}. 
\end{equation}
We note that in this case $D_{+}$ is the same as the Hellinger divergence. 
\end{example}
\begin{exercise}\label{ex:symm_threshold}
Deduce from Theorem~\ref{thm:exact} that~\eqref{eq:exact_symm} gives the threshold in the example above. 
\end{exercise}

\subsubsection{Achievability}

Let us now see how Theorem~\ref{thm:exact} follows from the hypothesis testing results, starting with the achievability. 
When the condition~\eqref{eq:exact_condition} holds, then Lemma~\ref{lem:D_+} tells us that in the hypothesis testing problem between Poisson distributions the error of the MAP estimate is $o(1/n)$. 
Thus if the setting of the thought experiment described in Section~\ref{sec:exact} applies to every vertex, then by looking at the degree profiles of the vertices we can correctly reclassify all vertices, and the probability that we make an error is $o(1)$ by a union bound. 
However, the setting of the thought experiment does not quite apply. Nonetheless, in this logarithmic degree regime it is possible to partially reconstruct the labels of the vertices, with only $o(n)$ vertices being misclassified. The details of this partial reconstruction procedure would require a separate lecture---in brief, it determines whether two vertices are in the same community or not by looking at how their $\log(n)$ size neighborhoods interact---so now we will take this for granted.

It is possible to show that there exists a constant $\delta$ such that if one estimates the label of a vertex $v$ based on classifications of its neighbors that are wrong with probability $x$, 
then the probability of misclassifying $v$ is at most $n^{\delta x}$ times the probability of error if all the neighbors of $v$ were classified correctly. 
The issue is that the standard partial recovery algorithm has a constant error rate for the classifications, thus the error rate of the degree profiling step could be $n^{c}$ times as large as the error in the hypothesis testing problem, for some $c > 0$. This is an issue when $\min_{i\neq j} D_{+} \left( \left( PQ \right)_{i}, \left( PQ \right)_{j} \right) < 1 + c$.

To get around this, one can do multiple rounds of more accurate classifications. 
First, one obtains a partial reconstruction of the labels with an error rate that is a sufficiently low constant. 
After applying the degree-profiling step to each vertex, the classification error at each vertex is now $O(n^{-c'})$ for some $c' > 0$. 
Hence after applying another degree-profiling step to each vertex, 
the classification error at each vertex will now be at most $n^{\delta \times O( n^{-c'} )} \times o(1/n) = o(1/n)$. 
Thus applying a union bound at this stage we can conclude that all vertices are correctly labelled whp.

\subsubsection{Impossibility}

The necessity of condition~\eqref{eq:exact_condition} was already described at a high level at the end of Section~\ref{sec:Poisson}. 
Here we give some details on how to deal with the dependencies that arise.

Assume that~\eqref{eq:exact_condition} does not hold, and let $i$ and $j$ be two communities that violate the condition, i.e., for which 
$D_{+} \left( \left( PQ \right)_{i}, \left( PQ \right)_{j} \right) < 1$. 
We want to argue that vertices in communities $i$ and $j$ cannot all be distinguished, and so any classification algorithm has to make at least one error whp. 
An important fact that we use is that the lower bound~\eqref{eq:LB} arises from a particular choice of degree profile that is both likely for the two communities. 
Namely, define the degree profile $x$ by 
\[
 x_{\ell} = \left\lfloor \left( PQ \right)_{\ell,i}^{t^{*}} \left( PQ \right)_{\ell,j}^{1-t^{*}} \ln \left( n \right) \right\rfloor
\]
for every $\ell \in [k]$, where $t^{*} \in \left[ 0, 1 \right]$ is the maximizer in $D_{+} \left( \left( PQ \right)_{i}, \left( PQ \right)_{j} \right)$, i.e., the value for which 
$D_{+} \left( \left( PQ \right)_{i}, \left( PQ \right)_{j} \right) = D_{t^{*}} \left( \left( PQ \right)_{i}, \left( PQ \right)_{j} \right)$. 
Then Lemma~\ref{lem:D_+} tells us that for any vertex in community $i$ or $j$, the probability that it has degree profile $x$ is at least 
\[
 \Omega \left( n^{- D_{+} \left( \left( PQ \right)_{i}, \left( PQ \right)_{j} \right)} / \left( \ln \left( n \right) \right)^{k/2} \right),
\]
which is at least $\Omega\left( n^{\eps - 1} \right)$ for some $\eps > 0$ by assumption.

To show that this holds for many vertices in communities $i$ and $j$ at once, 
we first select a random set $S$ of $n / \left( \ln \left( n \right) \right)^3$ vertices. 
Whp the intersection of $S$ with any community $\ell$ is within $\sqrt{n}$ of the expected value $p_{\ell} n / \left( \ln \left( n \right) \right)^3$, 
and furthermore a randomly selected vertex in $S$ is not connected to any other vertex in $S$. 
Thus the distribution of a vertex's degree profile excluding connections to vertices in $S$ is essentially a multivariate Poisson distribution as before. 
We call a vertex in $S$ \emph{ambiguous} if for each $\ell \in [k]$ it has exactly $x_{\ell}$ neighbors in community $\ell$ that are not in $S$. 
By Lemma~\ref{lem:D_+} we have that a vertex in $S$ that is in community $i$ or $j$ is ambiguous with probability $\Omega \left( n^{\eps - 1} \right)$. 
By definition, for a fixed community assignment and choice of $S$, there is no dependence on whether two vertices are ambiguous. 
Furthermore, due to the choice of the size of $S$, whp there are 
at least $\ln \left( n \right)$ ambiguous vertices in community $i$ and 
at least $\ln \left( n \right)$ ambiguous vertices in community $j$ 
that are not adjacent to any other vertices in $S$. 
These $2 \ln \left( n \right)$ are indistinguishable, so no algorithm classifies all of them correctly with probability greater than 
$1 / \binom{2\ln \left( n \right)}{\ln \left( n \right)}$, which tends to $0$ as $n \to \infty$.

\subsubsection{The finest exact partition recoverable}

We conclude by mentioning that this threshold generalizes to finer questions. 
If exact recovery is not possible, what is the finest partition that can be recovered? 
We say that exact recovery is solvable for a community partition 
$\left[ k \right] = \sqcup_{s=1}^{t} A_s$, where $A_s$ is a subset of $[k]$, 
if there exists an algorithm that whp assigns to every vertex an element of $\left\{ A_{1}, \dots, A_{t} \right\}$ that contains its true community. 
The finest partition that is exactly recoverable can also be expressed using CH-divergence in a similar fashion. 
It is the largest collection of disjoint subsets such that the CH-divergence between these subsets is at least $1$, 
where the CH-divergence between two subsets is defined as the minimum of the CH-divergences between any two community profiles in these subsets. 
\begin{theorem}[Abbe and Sandon~\cite{AbbeSandon15}]\label{thm:exact_fine}%[Abbe and Sandon~\cite[Theorem~6]{AbbeSandon15}]
Under the same settings as in Theorem~\ref{thm:exact}, exact recovery is solvable in $\SBM \left( n, p, \ln(n) Q / n \right)$ for a partition $\left[ k \right] = \sqcup_{s=1}^{t} A_s$ if and only if 
\begin{equation*}\label{eq:exact_condition_fine}
D_{+} \left( \left( PQ \right)_{i}, \left( PQ \right)_{j} \right) \geq 1 
\end{equation*}
for every $i$ and $j$ in different subsets of the partition. 
\end{theorem}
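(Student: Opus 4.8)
The plan is to re-run the proof of Theorem~\ref{thm:exact} almost verbatim, replacing the $k$-ary hypothesis test between the community profiles $(PQ)_1,\dots,(PQ)_k$ by the coarser test that asks only which part $A_s$ of the partition a vertex belongs to, and then checking the two places where this substitution matters.

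\emph{Impossibility.} Suppose~\eqref{eq:exact_condition} fails for the partition, i.e.\ there are $i$ and $j$ with $D_{+}((PQ)_{i},(PQ)_{j}) < 1$ and $i \in A_s$, $j \in A_{s'}$ for some $s \neq s'$. I would run verbatim the construction from the impossibility part of Theorem~\ref{thm:exact}: select the sparse random set $S$ of $n/(\ln n)^3$ vertices, build from $t^{*}$ the ambiguous degree profile $x$, and use~\eqref{eq:LB} together with the concentration of $|S \cap A_\ell|$ to produce, whp, at least $\ln n$ ambiguous vertices in community $i$ and at least $\ln n$ in community $j$, none adjacent to any other vertex of $S$. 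Exactly as there, these $2\ln n$ vertices are statistically indistinguishable. The one new observation needed is that since $i \in A_s$ and $j \in A_{s'}$ with $A_s \neq A_{s'}$, any algorithm that correctly recovers the partition must output the label $A_s$ for the community-$i$ ambiguous vertices and the \emph{different} label $A_{s'}$ for the community-$j$ ambiguous vertices; hence it succeeds on all of them with probability at most $1/\binom{2\ln n}{\ln n} = o(1)$.

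\emph{Achievability.} Now suppose~\eqref{eq:exact_condition} holds across parts. Since no two rows of $Q$ are equal and $p > 0$, all community profiles are distinct, so $2c' := \min_{i \neq j} D_{+}((PQ)_{i},(PQ)_{j}) > 0$. I would keep the same pipeline as in Theorem~\ref{thm:exact}. First, partial recovery gives community labels wrong for only $o(n)$ vertices, hence wrong with an arbitrarily small constant rate $x_0$; pick $x_0 < c'/\delta$, where $\delta$ is the boosting constant from the discussion following Theorem~\ref{thm:exact}. Second, re-estimate each vertex by the degree-profiling MAP rule over the $k$ community profiles (handling the dependence between a vertex and its neighbours' estimates as in the proof of Theorem~\ref{thm:exact}); by~\eqref{eq:P_e} and~\eqref{eq:UB} its error assuming correct neighbours is $O(n^{-2c'})$, so after the $n^{\delta x_0}$ amplification each vertex's community label is wrong with probability $O(n^{-c'})$. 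Third, apply the degree-profiling MAP rule once more but record only the part containing the estimated community. A vertex $v$ of true community $c \in A_s$ lands in the wrong part only if the rule outputs some $j \notin A_s$; by the pairwise comparisons underlying the MAP rule (cf.~\eqref{eq:error_ij} and~\eqref{eq:P_e}) this has probability at most $p_c^{-1}\sum_{j \notin A_s} P_{e}(c,j)$ when $v$'s neighbours are correct, and each $P_{e}(c,j)$ with $c$ and $j$ in different parts is $O(n^{-D_{+}((PQ)_{c},(PQ)_{j}) - (\ln\ln n)/(2\ln n)}) = o(1/n)$ by~\eqref{eq:UB} and the hypothesis $D_{+}((PQ)_{c},(PQ)_{j}) \geq 1$. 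Feeding in the $O(n^{-c'})$ neighbour error from the previous step, $v$'s part is wrong with probability $n^{\delta \cdot O(n^{-c'})} \cdot o(1/n) = o(1/n)$, and a union bound over the $n$ vertices finishes the argument.

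\emph{Main obstacle.} There is no genuinely new difficulty: the heavy machinery — the partial recovery algorithm, the device that removes the dependence between a vertex's estimate and its neighbours', and the multi-round boosting — is imported wholesale from Theorem~\ref{thm:exact}. The partition hypothesis enters only where it must: in the bound $\sum_{j \notin A_s} P_{e}(c,j) = o(1/n)$ of the last step, which is exactly the place where $D_{+} \geq 1$ \emph{across parts} (rather than between all pairs of communities) is used, and, dually, in the observation on the impossibility side that the offending pair $(i,j)$ can be taken in distinct parts. The one point requiring slight care is that within-part community misclassifications are \emph{not} (and need not be) pushed below $o(1/n)$; one must simply note that the degree profiles fed into the final step are corrupted only at the polynomially small rate $O(n^{-c'})$ produced by the previous step, which the $n^{\delta x}$ amplification factor absorbs.
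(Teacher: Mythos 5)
Your proposal is correct, and it is essentially the argument the paper gestures at: the paper does not actually prove Theorem~\ref{thm:exact_fine} (it is stated at the end of the lecture merely as a generalization of Theorem~\ref{thm:exact}), and your proof is the natural extension of the paper's proof sketch for Theorem~\ref{thm:exact}, reusing the partial-recovery-plus-iterated-degree-profiling pipeline for achievability and the ambiguous-degree-profile construction for impossibility, with the hypothesis $D_{+} \geq 1$ invoked only across parts. You also correctly isolate the one real subtlety — that the intermediate round must still drive the \emph{community}-label error (not just the part-label error) down to a polynomially small rate $O(n^{-c'})$, which is possible because no two rows of $Q$ are equal so $\min_{i \neq j} D_{+}((PQ)_i,(PQ)_j) > 0$, and which is needed because the degree profiles fed to the final step depend on neighbours' community labels rather than their parts.
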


% %%%%%%%%%%%%%%%%%%%%%%%%%%%%%%%%%%%%%%%%%%%%%%%%%%%%%%%%%%%%%%
% \subsection{Further notes and comments} \label{sec:notes1} %%%
% %%%%%%%%%%%%%%%%%%%%%%%%%%%%%%%%%%%%%%%%%%%%%%%%%%%%%%%%%%%%%%
% 

\newpage

%%%%%%%%%%%%%%%%%%%%%%%%%%%%%%%%%%%%%%%%%%%%%%%%%%%%%%%%%%%%%%%%%%%%%%%%%%%%%%%%%%%%%%%%%%%%%%%%%%%%%%%%%%%%%%%%%%%%%%%%%%%
\section{Lecture 2: Estimating the dimension of a random geometric graph on a high-dimensional sphere} \label{sec:lec2} %%%
%%%%%%%%%%%%%%%%%%%%%%%%%%%%%%%%%%%%%%%%%%%%%%%%%%%%%%%%%%%%%%%%%%%%%%%%%%%%%%%%%%%%%%%%%%%%%%%%%%%%%%%%%%%%%%%%%%%%%%%%%%%

Many real-world networks have strong structural features and our goal is often to recover these hidden structures. 
In the previous lecture we studied the fundamental limits of inferring communities in the stochastic block model, a natural generative model for graphs with community structure. 
Another possibility is \emph{geometric structure}. 
Many networks coming from physical considerations naturally have an underlying geometry, such as the network of major roads in a country. 
In other networks this stems from a latent feature space of the nodes. 
For instance, in social networks a person might be represented by a feature vector of their interests, and two people are connected if their interests are close enough; 
this latent metric space is referred to as the \emph{social space}~\cite{HRH02}.

In such networks the natural questions probe the underlying geometry. 
Can one detect the presence of geometry? 
If so, can one estimate various aspects of the geometry, e.g., an appropriately defined dimension? 
In this lecture we study these questions in a particularly natural and simple generative model of a random geometric graph: 
$n$ points are picked uniformly at random on the $d$-dimensional sphere, and two points are connected by an edge if and only if they are sufficently close.\footnote{This lecture is based on~\cite{BDER16}.}

We are particularly interested in the \emph{high-dimensional} regime, 
motivated by recent advances in all areas of applied mathematics, and in particular statistics and learning theory, 
where high-dimensional feature spaces are becoming the new norm.  
While the low-dimensional regime has been studied for a long time in probability theory~\cite{Penrosebook}, 
the high-dimensional regime brings about a host of new and interesting questions.

%%%%%%%%%%%%%%%%%%%%%%%%%%%%%%%%%%%%%%%%%%%%%%%%%%%%%%%%%%%%%%%%%%%%%%%%%%%%%%%%%%%%%%%%%%%%
\subsection{A simple random geometric graph model and basic questions} \label{sec:model} %%%
%%%%%%%%%%%%%%%%%%%%%%%%%%%%%%%%%%%%%%%%%%%%%%%%%%%%%%%%%%%%%%%%%%%%%%%%%%%%%%%%%%%%%%%%%%%%

Let us now define more precisely the random geometric graph model we consider and the questions we study. 
In general, a geometric graph is such that each vertex is labeled with a point in some metric space, 
and an edge is present between two vertices if the distance between the corresponding labels is smaller than some prespecified threshold. 
We focus on the case where the underlying metric space is the Euclidean sphere $\mathbb{S}^{d-1}=\left\{ x\in \R^d: \|x\|_{2} = 1 \right\}$, 
and the latent labels are i.i.d.\ uniform random vectors in $\mathbb{S}^{d-1}$. 
We denote this model by $G(n,p,d)$, where $n$ is the number of vertices and $p$ is the probability of an edge between two vertices ($p$ determines the threshold distance for connection). 
This model is closely related to latent space approaches to social network analysis~\cite{HRH02}.

Slightly more formally, $G(n,p,d)$ is defined as follows. 
Let $X_1,\ldots,X_n$ be independent random vectors, uniformly distributed on $\mathbb{S}^{d-1}$. 
In $G(n,p,d)$, distinct vertices $i \in [n]$ and $j \in [n]$ are connected by an edge if and only if $\langle X_i,X_j \rangle \ge t_{p,d}$, 
where the threshold value $t_{p,d} \in [-1,1]$ is such that $\P\left( \langle X_1,X_2 \rangle \ge t_{p,d} \right) = p$. 
For example, when $p = 1/2$ we have $t_{p,d} = 0$. %, and when $p \in \left( \eps, 1/2 - \eps \right)$ for some $\eps > 0$, then $t_{p,d} = \Theta \left( 1 / \sqrt{d} \right)$. 

The most natural random graph model without any structure is the standard Erd\H{o}s-R\'enyi random graph $G(n,p)$, 
where any two of the $n$ vertices are independently connected with probability~$p$.

We can thus formalize the question of detecting underlying geometry as a simple hypothesis testing question. 
The null hypothesis is that the graph is drawn from the Erd\H{o}s-R\'enyi model, 
while the alternative is that it is drawn from $G(n,p,d)$. In brief: 
\begin{equation}\label{eq:hypothesis_Gnpd}
 H_{0} : G \sim G(n,p), 
\qquad \qquad 
 H_{1} : G \sim G(n,p,d).
\end{equation}
To understand this question, the basic quantity we need to study is the total variation distance between the two distributions on graphs, $G(n,p)$ and $G(n,p,d)$, denoted by $\TV \left( G(n,p), G(n,p,d) \right)$; 
recall that the total variation distance between two probability measures $P$ and $Q$ is defined as 
$\TV \left( P, Q \right) = \tfrac{1}{2} \left\| P - Q \right\|_{1} = \sup_{A} \left| P(A) - Q(A) \right|$.  
We are interested in particular in the case when the dimension $d$ is \emph{large}, growing with $n$. 

It is intuitively clear that if the geometry is too high-dimensional, then it is impossible to detect it, 
while a low-dimensional geometry will have a strong effect on the generated graph and will be detectable. 
How fast can the dimension grow with $n$ while still being able to detect it? 
Most of this lecture will focus on this question.

If we can detect geometry, then it is natural to ask for more information. 
Perhaps the ultimate goal would be to find an embedding of the vertices into an appropriate dimensional sphere that is a \emph{true representation}, 
in the sense that the geometric graph formed from the embedded points is indeed the original graph. 
More modestly, can the dimension be estimated? We touch on this question at the end of the lecture.

%%%%%%%%%%%%%%%%%%%%%%%%%%%%%%%%%%%%%%%%%%%%%%%%%%%%%%%%%%%%%%%%%%%%%%%%%%%%%%%%%%%%%%%%%%%%%%
\subsection{The dimension threshold for detecting underlying geometry} \label{sec:results} %%%
%%%%%%%%%%%%%%%%%%%%%%%%%%%%%%%%%%%%%%%%%%%%%%%%%%%%%%%%%%%%%%%%%%%%%%%%%%%%%%%%%%%%%%%%%%%%%%

The high-dimensional setting of the random geometric graph $G(n,p,d)$ was first studied by Devroye, Gy\"orgy, Lugosi, and Udina~\cite{DGLU11}, who showed that if $n$ is fixed and $d \to \infty$, then 
\[
 \TV \left( G(n,p), G(n,p,d) \right) \to 0,
\]
that is, geometry is indeed lost in high dimensions. 
More precisely, they show that this convergence happens when $d \gg n^{7} 2^{n^2 / 2}$.\footnote{Throughout these notes we use standard asymptotic notation; 
for instance, $f\left( t \right) \ll g\left( t \right)$ as $t\to\infty$ if $\lim_{t\to\infty} f\left( t \right) / g \left( t \right) = 0$.} 
This follows by observing that for fixed $n$, the multivariate central limit theorem implies that as $d \to \infty$, the inner products of the latent vectors converge in distribution to a standard Gaussian:
\[
 \left( \frac{1}{\sqrt{d}} \left\langle X_i, X_j \right\rangle \right)_{\left\{i,j\right\} \in \binom{\left[n\right]}{2}} \stackrel{d\to\infty}{\Longrightarrow} \cN \left( 0, I_{\binom{n}{2}} \right).
\]
The Berry-Esseen theorem gives a convergence rate, which then allows to show that for any graph $G$ on $n$ vertices, 
$\left| \p \left( G(n,p) = G \right)  - \p \left( G(n,p,d) = G \right) \right| = O \left( \sqrt{n^{7} / d} \right)$; 
the factor of $2^{n^2 / 2}$ comes from applying this bound to every term in the $L_1$ distance.

However, the result above is not tight, and we seek to understand the fundamental limits to detecting underlying geometry. 
The dimension threshold for dense graphs was recently found in~\cite{BDER16}, and it turns out that it is $d \approx n^3$, in the following sense. 

\begin{theorem}[Bubeck, Ding, Eldan, R\'acz~\cite{BDER16}]\label{thm:gnpd_n3}
Let $p \in (0,1)$ be fixed. Then
\begin{numcases}
 {\TV \left( G(n,p), G(n,p,d) \right) \to }
 0, & $\text{ if } d \gg n^3,$ \label{eq:impossible} \\ 
 1, & $\text{ if } d \ll n^3.$ \label{eq:possible}
\end{numcases}
Moreover, in the latter case there exists a computationally efficient test to detect underlying geometry (with running time $O\left( n^{3} \right)$).
\end{theorem}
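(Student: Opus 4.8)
The plan is to use the \emph{signed triangle count} as the test statistic. Writing $\xi_{ij} := A_{ij} - p$ for the centered adjacency matrix entries, set
\[
T \ := \ \sum_{\{i,j,k\} \in \binom{[n]}{3}} \xi_{ij}\,\xi_{jk}\,\xi_{ik},
\]
which can be evaluated in $O(n^3)$ time (by summing over triples, or via $\Tr((A-pJ)^3)$ up to $O(n^2)$ diagonal corrections, where $J$ is the all-ones matrix). Under $H_0$ the $\xi_{ij}$ are i.i.d.\ and centered, so $\E T = 0$, and only coincident triangles contribute to the variance, giving $\Var_{H_0} T = \binom n3 (p(1-p))^3 = \Theta(n^3)$. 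Under $H_1$, conditioning on $X_1,X_2$ and using that $(\sqrt d\langle X_1,X_3\rangle, \sqrt d\langle X_2,X_3\rangle)$ is, to leading order in $d$, bivariate normal with correlation exactly $\langle X_1,X_2\rangle$, one gets $\E[\xi_{13}\xi_{23}\mid X_1,X_2] = (1+o(1))\,\phi(a)^2\,\langle X_1,X_2\rangle$ with $a := \Phi^{-1}(1-p)$, and then $\E[\xi_{12}\langle X_1,X_2\rangle] = (1+o(1))\,\phi(a)/\sqrt d$, whence $\E_{H_1} T = \Theta(n^3/\sqrt d)$. For the variance under $H_1$ I would sort pairs of triangles by the size of their intersection: pairs meeting in at most one vertex have covariance $0$ (by independence, respectively by rotational invariance of the model), pairs sharing an edge contribute $O(1/d)$ each and there are $O(n^4)$ of them, and coincident triangles contribute $O(1)$ each, so $\Var_{H_1} T = O(n^3 + n^4/d) = o\big(n^6/d\big) = o\big((\E_{H_1}T)^2\big)$ precisely when $d \ll n^3$. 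The test ``reject $H_0$ iff $|T| \ge \tfrac12\,\E_{H_1}T$'' then has both error probabilities tending to $0$ by Chebyshev's inequality, which gives $\TV(G(n,p),G(n,p,d)) \to 1$ and exhibits the claimed $O(n^3)$-time test.

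\noindent\textbf{Impossibility ($d \gg n^3$).} The starting observation is that both graph laws are the image of a latent symmetric matrix under the \emph{same} entrywise threshold map $f(\cdot) = \mathbf{1}[\,\cdot \ge t_{p,d}]$: for $G(n,p,d)$ the input is the off-diagonal Gram matrix $Y = (\langle X_i, X_j\rangle)_{i\ne j}$, and for $G(n,p)$ the input is a matrix $Z$ with i.i.d.\ entries distributed as $\langle X_1,X_2\rangle$. The data-processing inequality reduces the claim to showing $\TV\big(\mathcal L(Y),\mathcal L(Z)\big) \to 0$ when $d \gg n^3$. Rescaling all entries by $\sqrt d$ (a $\TV$-preserving bijection), the i.i.d.\ side $\sqrt d\,Z$ is within $\TV$-distance $O(n^2/d)$ of an i.i.d.\ standard Gaussian matrix by a Berry--Esseen estimate on the single-entry marginal, so it remains to compare $\sqrt d\,Y$ with an i.i.d.\ Gaussian matrix. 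Using $X_i = g_i/\|g_i\|$ with $g_i \sim \mathcal N(0,I_d)$, this is the off-diagonal Wishart matrix $\tfrac1{\sqrt d}GG^\top$ up to the row normalizations $\|g_i\|/\sqrt d = 1 + O_P(d^{-1/2})$, which I would argue cost only $o(1)$ in $\TV$ in this regime. For the remaining Wishart-versus-Gaussian comparison I would use a hybrid argument, revealing the latent vectors $X_1, X_2, \dots$ one at a time: conditionally on $X_1,\dots,X_k$ the newly revealed row $(\langle X_1,X_{k+1}\rangle,\dots,\langle X_k,X_{k+1}\rangle)$ is the image of a uniform point under projection onto a $k$-dimensional subspace, which is within $\TV$-distance $O(k^2/d)$ of $k$ independent centered Gaussians; summing the telescoping bound over $k \le n$ yields $O(n^3/d) \to 0$.

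\noindent\textbf{Main obstacle.} The achievability side is a moment computation whose only delicate points are the first-order orthant expansion $\P(W_1\ge a, W_2\ge a) - p^2 = \phi(a)^2\rho + O(\rho^2)$ for weakly correlated bivariate normals and the control of the non-Gaussian corrections coming from the latent vectors living on the sphere rather than in Gaussian space; these should be routine but must be made uniform over the relevant ranges. The real work is the impossibility direction, where the hybrid argument has to be made quantitative with the sharp exponent: one must show that the conditional law of each newly revealed row deviates from a product of Gaussians by only $O(k^2/d)$ in total variation — in particular that its dependence on the previously revealed Gram matrix of $X_1,\dots,X_k$ is no larger than this — and, separately, that the passage between sphere-uniform and Gaussian latent vectors (both the row normalizations $\|g_i\|/\sqrt d$ and the single-coordinate marginal $\sqrt d\langle X_1,X_2\rangle \approx \mathcal N(0,1)$) does not already exceed the $O(n^3/d)$ budget.
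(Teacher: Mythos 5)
Your argument via signed triangles is essentially identical to the paper's (Exercises~\ref{ex:var_signed}, \ref{ex:chebyshev} and displays \eqref{eq:tau_gnpd}, \eqref{eq:tau_gnpd_var}): the same statistic, the same variance bookkeeping by intersection pattern, the same Chebyshev test, the same $O(n^3)$ running time. This direction is fine.

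\textbf{Impossibility.} Your reduction (data processing through the common threshold map, rescale by $\sqrt d$, pass to Gaussian latent vectors) is a legitimate start and is morally the same as the paper's reduction to comparing $W(n,d)$ with $M(n,d)$. But there is a genuine gap in the hybrid step. Conditionally on $X_1,\dots,X_k$, the rescaled new row is, to leading order, $N\bigl(0,\Sigma_k\bigr)$ with $\Sigma_k$ the $k\times k$ Gram matrix of the (normalized) first $k$ rows, and $\|\Sigma_k-I_k\|_F \asymp k/\sqrt d$. Since $\TV\bigl(N(0,I+E),N(0,I)\bigr) \asymp \|E\|_F$ for small $E$, the per-step total variation to $N(0,I_k)$ is $\Theta(k/\sqrt d)$, not $O(k^2/d)$ as you claim (the $O(k^2/d)$ figure is the size of the per-step relative entropy, or of the non-Gaussianity of the sphere marginal alone, which for $k\ll\sqrt d$ is dominated by the Gram-mismatch term $k/\sqrt d$). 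Telescoping $\TV$ then gives $\sum_{k\le n} k/\sqrt d \asymp n^2/\sqrt d$, which vanishes only when $d\gg n^4$ --- you lose a full factor of $n$. The point you flag yourself as the ``main obstacle,'' that the dependence on the realized Gram matrix be no larger than $O(k^2/d)$ in $\TV$, is in fact false.

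The repair is to telescope in relative entropy rather than total variation: by the chain rule \eqref{eq:chain_rel_entropy}, the per-step contribution to $\mathrm{Ent}(\cdot\,\|\,\cdot)$ is $\approx \tfrac14\|\Sigma_k-I_k\|_F^2 \asymp k^2/d$, summing to $O(n^3/d)$, and a \emph{single} application of Pinsker at the end gives $\TV \lesssim \sqrt{n^3/d}\to 0$ iff $d\gg n^3$. This is precisely what the Bubeck--Ganguly proof in Lecture~3 does (the high-dimensional entropic CLT supplies the per-step bound); the Lecture~2 proof of Theorem~\ref{thm:Wishart_GOE} is instead a direct comparison of the explicit Wishart and shifted-GOE densities via a Taylor expansion of the log-ratio and the semicircle law. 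Your sequential strategy is viable --- it is essentially the Lecture~3 route --- but it must be run in KL, not in $\TV$, to hit the $n^3$ exponent.

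Two smaller points worth a careful second look if you pursue this: (i) the passage from sphere-uniform to Gaussian rows ($\|g_i\|/\sqrt d = 1+O_P(d^{-1/2})$) is not a per-row monotone rescaling of the Gram entries and needs a real argument, since the normalizations couple the rows; the paper sidesteps this for $p=1/2$ because the threshold test is sign-only; (ii) ``Berry--Esseen'' controls Kolmogorov distance, not $\TV$; for the single-entry $\TV$ bound $O(1/d)$ you want a local (density-level) comparison of the one-dimensional sphere marginal to $N(0,1)$, which does hold but by a different estimate.
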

Most of the lecture will be devoted to understanding this theorem. 
At the end we will consider this same question for \emph{sparse graphs} (where $p = c/n$), where determining the dimension threshold is an intriguing open problem.

%%%%%%%%%%%%%%%%%%%%%%%%%%%%%%%%%%%%%%%%%%%%%%%%%%%%%%%
\subsection{The triangle test} \label{sec:triangle} %%%
%%%%%%%%%%%%%%%%%%%%%%%%%%%%%%%%%%%%%%%%%%%%%%%%%%%%%%%

A natural test to uncover geometric structure is to count the number of triangles in $G$. 
Indeed, in a purely random scenario, vertex $u$ being connected to both $v$ and $w$ says nothing about whether $v$ and $w$ are connected. 
On the other hand, in a geometric setting this implies that $v$ and $w$ are close to each other due to the triangle inequality, thus increasing the probability of a connection between them.  
This, in turn, implies that the expected number of triangles is larger in the geometric setting, given the same edge density. 
Let us now compute what this statistic gives us. 
\begin{figure}[h!]
 \centering
 \includegraphics[width=0.15\textwidth]{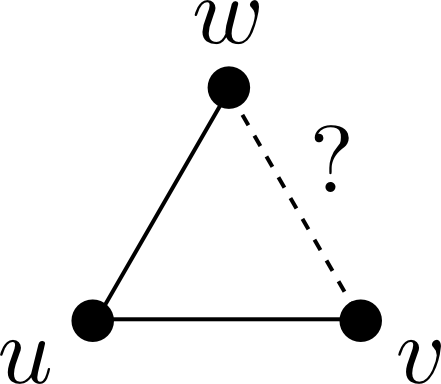}
 \caption{Given that $u$ is connected to both $v$ and $w$, $v$ and $w$ are more likely to be connected under $G(n,p,d)$ than under $G(n,p)$.}
 \label{fig:triangle}
\end{figure}

For a graph $G$, let $A$ denote its adjacency matrix, i.e., $A_{i,j} = 1$ if vertices $i$ and $j$ are connected, and $0$ otherwise. 
Then $T_{G} \left( i,j,k \right) := A_{i,j} A_{i,k} A_{j,k}$ is the indicator variable that three vertices $i$, $j$, and $k$ form a triangle, and so the number of triangles in $G$ is 
\begin{equation*}\label{eq:triangle_def}
 T(G) := \sum_{\{i,j,k\} \in \binom{[n]}{3}} T_{G} \left( i,j,k \right).
\end{equation*}
By linearity of expectation, for both models the expected number of triangles is $\binom{n}{3}$ times the probability of a triangle between three specific vertices. 
For the Erd\H{o}s-R\'enyi random graph the edges are independent, so the probability of a triangle is $p^3$, and thus we have 
\[
 \E \left[ T \left( G(n,p) \right) \right] = \binom{n}{3} p^3. 
\]

For $G(n,p,d)$ it turns out that for any fixed $p \in \left( 0, 1 \right)$ we have 
\begin{equation}\label{eq:triangle_gnpd}
 \p \left( T_{G(n,p,d)} \left( 1, 2, 3 \right) = 1 \right) \approx p^{3} \left( 1 + \frac{C_{p}}{\sqrt{d}} \right)
\end{equation}
for some constant $C_{p} > 0$, which gives that 
\[
 \E \left[ T \left( G(n,p,d) \right) \right] \geq \binom{n}{3} p^3 \left( 1 + \frac{C_{p}}{\sqrt{d}} \right). 
\]
Showing~\eqref{eq:triangle_gnpd} is somewhat involved, but in essence it follows from the \emph{concentration of measure} phenomenon on the sphere, 
namely that most of the mass on the high-dimensional sphere is located in a band of $O \left( 1 / \sqrt{d} \right)$ around the equator. 
We sketch here the main intuition for $p=1/2$, which is illustrated in Figure~\ref{fig:triangle_gnpd}. 

Let $X_1$, $X_2$, and $X_3$ be independent uniformly distributed points in $\mathbb{S}^{d-1}$. Then 
\begin{multline*}
 \p \left( T_{G(n,1/2,d)} \left( 1, 2, 3 \right) = 1 \right) \\
\begin{aligned}
&= \p \left( \langle X_1, X_2 \rangle \geq 0, \langle X_1, X_3 \rangle \geq 0, \langle X_2, X_3 \rangle \geq 0 \right) \\
&= \p \left( \langle X_2, X_3 \rangle \geq 0 \, \middle| \, \langle X_1, X_2 \rangle \geq 0, \langle X_1, X_3 \rangle \geq 0 \right) \p \left( \langle X_1, X_2 \rangle \geq 0, \langle X_1, X_3 \rangle \geq 0 \right) \\
&= \frac{1}{4} \times \p \left( \langle X_2, X_3 \rangle \geq 0 \, \middle| \, \langle X_1, X_2 \rangle \geq 0, \langle X_1, X_3 \rangle \geq 0 \right),
\end{aligned}
\end{multline*}
where the last equality follows by independence. 
So what remains is to show that this latter conditional probability is approximately $1/2 + c / \sqrt{d}$. 
To compute this conditional probability what we really need to know is the typical angle is between $X_1$ and $X_2$. 
By rotational invariance we may assume that $X_1 = (1,0,0, \dots, 0)$, and hence $\langle X_1, X_2 \rangle = X_{2} (1)$, the first coordinate of $X_{2}$. 
One way to generate $X_2$ is to sample a $d$-dimensional standard Gaussian and then normalize it by its length. 
Since the norm of a $d$-dimensional standard Gaussian is very well concentrated around $\sqrt{d}$, it follows that $X_{2}(1)$ is on the order of $1/\sqrt{d}$. 
Conditioned on $X_{2}(1) \geq 0$, this typical angle gives the boost in the conditional probability that we see. See Figure~\ref{fig:triangle_gnpd} for an illustration.

\begin{figure}[h!]
 \centering
 \includegraphics[width=0.55\textwidth]{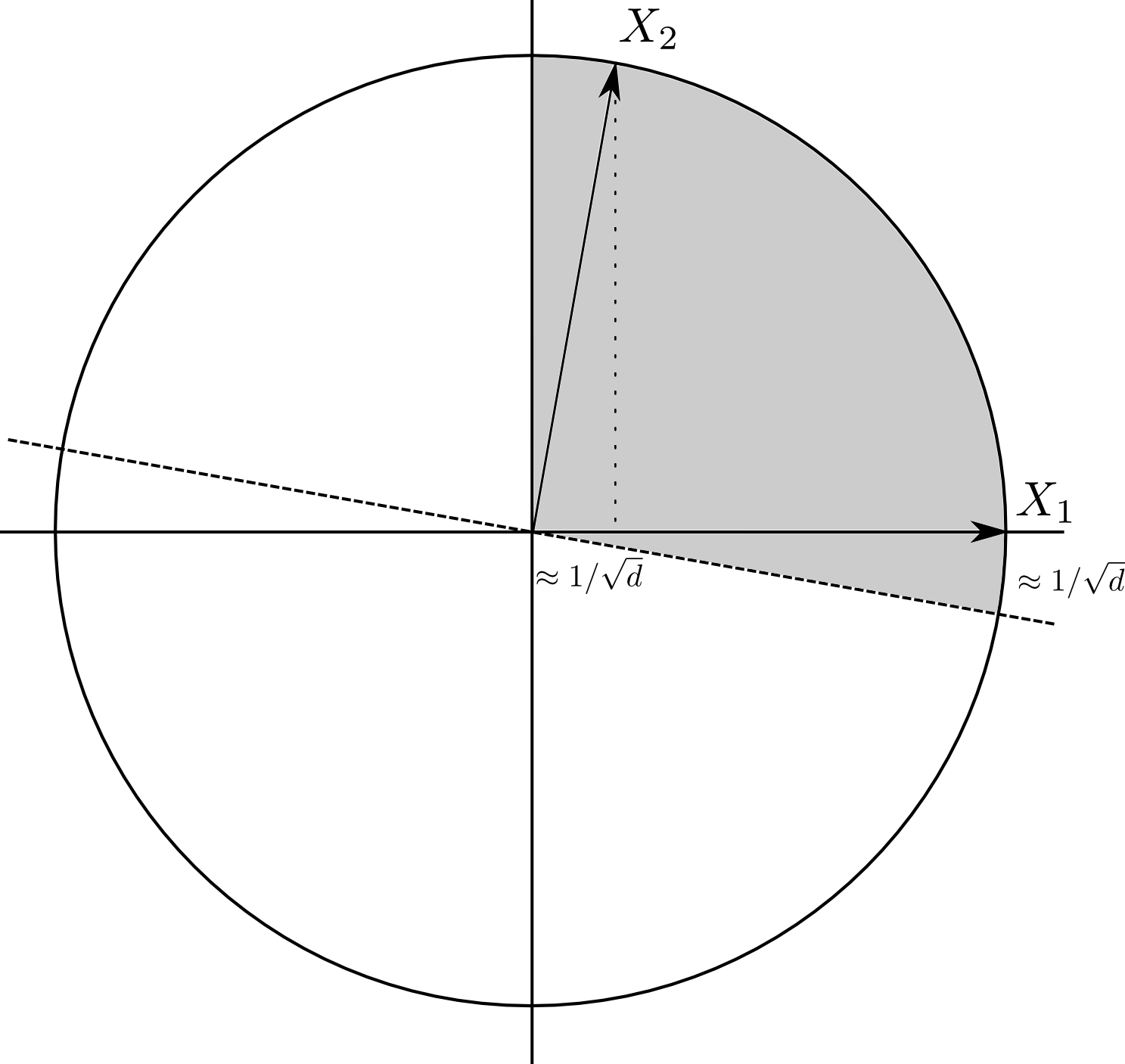}
 \caption{If $X_{1}$ and $X_{2}$ are two independent uniform points on the $d$-dimensional sphere $\mathbb{S}^{d-1}$, then their inner product $\left\langle X_1, X_2 \right\rangle$ is on the order of $1/\sqrt{d}$ due to the concentration of measure phenomenon on the sphere. This then implies that the probability of a triangle in $G(n,1/2,d)$ is $(1/2)^3 + c/\sqrt{d}$ for some constant $c > 0$.}
 \label{fig:triangle_gnpd}
\end{figure}

Thus we see that the boost in the number of triangles in the geometric setting is $\Theta \left( n^{3} / \sqrt{d} \right)$ in expectation:
\[
 \E \left[ T \left( G(n,p,d) \right) \right] - \E \left[ T \left( G(n,p) \right) \right] \geq \binom{n}{3} \frac{C_p}{\sqrt{d}}.
\]
To be able to tell apart the two graph distributions based on the number of triangles, the boost in expectation needs to be much greater than the standard deviation. 

\begin{exercise}\label{ex:var}
 Show that 
\[
 \Var \left( T \left( G \left( n, p \right) \right) \right) = \binom{n}{3} \left( p^{3} - p^{6} \right) + \binom{n}{4} \binom{4}{2} \left( p^{5} - p^{6} \right)
\]
and that 
$\Var \left( T \left( G \left( n, p, d \right) \right) \right) \leq n^4$.
\end{exercise}

\begin{exercise}\label{ex:chebyshev}
 Show that if 
\[
 \left| \E \left[ T \left( G(n,p,d) \right) \right] - \E \left[ T \left( G(n,p) \right) \right] \right| 
\gg 
\max \left\{ \sqrt{\Var \left( T \left( G \left( n, p \right) \right) \right)}, \sqrt{\Var \left( T \left( G \left( n, p, d \right) \right) \right)} \right\},
\]
then 
\[
  \TV \left( G(n,p), G(n,p,d) \right) \to 1.
\]
\end{exercise}

Putting together Exercises~\ref{ex:var} and~\ref{ex:chebyshev} we see that 
$\TV \left( G(n,p), G(n,p,d) \right) \to 1$ 
if 
$n^{3} / \sqrt{d} \gg \sqrt{n^4}$, 
which is equivalent to $d \ll n^2$.

%%%%%%%%%%%%%%%%%%%%%%%%%%%%%%%%%%%%%%%%%%%%%%%%%%%%%%%%%%%%%%%%%%%%%%%%%%%%%%%
\subsection{Signed triangles are more powerful} \label{sec:signed_triangle} %%%
%%%%%%%%%%%%%%%%%%%%%%%%%%%%%%%%%%%%%%%%%%%%%%%%%%%%%%%%%%%%%%%%%%%%%%%%%%%%%%%

While triangles detect geometry up until $d \ll n^2$, are there even more powerful statistics that detect geometry for larger dimensions? 
One can check that longer cycles also only work when $d \ll n^2$, as do several other natural statistics. 
Yet it turns out that the underlying geometry can be detected even when $d \ll n^{3}$.

The simple idea that leads to this improvement is to consider \emph{signed triangles}. 
We have already noticed that triangles are more likely in the geometric setting than in the purely random setting. 
This also means that induced wedges (i.e., when there are exactly two edges among the three possible ones) are less likely in the geometric setting. 
Similarly, induced single edges are more likely, and induced independent sets on three vertices are less likely in the geometric setting. 
Figure~\ref{fig:triangles_signed} summarizes these observations. 
\begin{figure}[h!]
 \centering
 \includegraphics[width=0.55\textwidth]{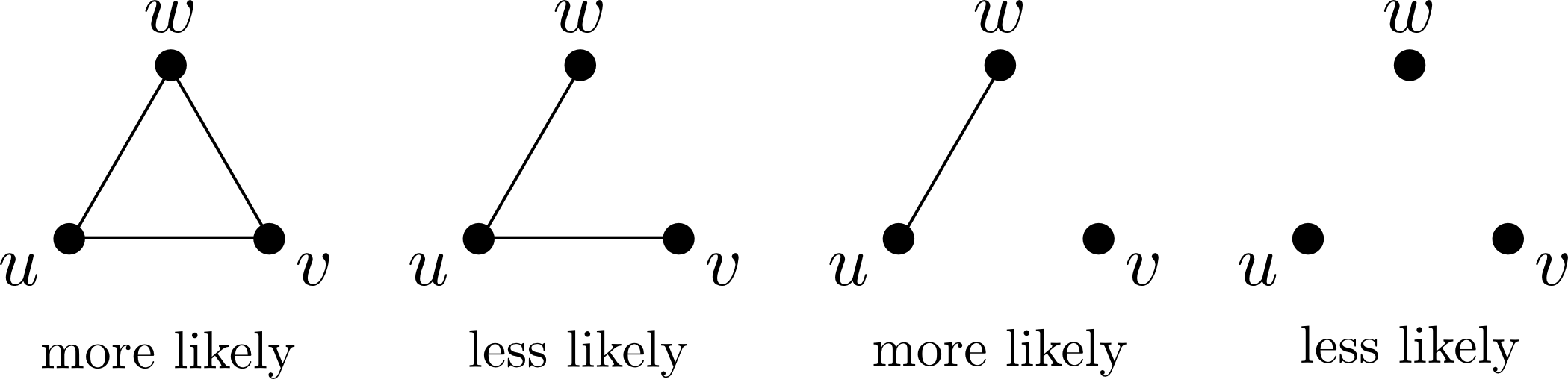}
 \caption{This figure summarizes which patterns are more or less likely in the geometric setting than in the purely random setting. The signed triangles statistic reweights the different patterns with positive and negative weights.}
 \label{fig:triangles_signed}
\end{figure}

The signed triangles statistic incorporates these observations by giving the different patterns positive or negative weights. More precisely, we define 
\[
 \tau \left( G \right) := \sum_{\{i,j,k\} \in \binom{[n]}{3}} \left( A_{i,j} - p \right) \left( A_{i,k} - p \right) \left( A_{j,k} - p \right).
\]
The key insight motivating this definition is that the variance of signed triangles is \emph{much smaller} than the variance of triangles, 
due to the cancellations introduced by the centering of the adjacency matrix: the $\Theta \left( n^{4} \right)$ term vanishes, leaving only the $\Theta \left( n^{3} \right)$ term. 
\begin{exercise}\label{ex:var_signed}
Show that 
\[
 \E \left[ \tau \left( G(n,p) \right) \right] = 0
\]
and 
\[
 \Var \left( \tau \left( G(n,p) \right) \right) = \binom{n}{3} p^{3} \left( 1 - p \right)^{3}.
\]
\end{exercise}
On the other hand it can be shown that 
\begin{equation}\label{eq:tau_gnpd}
\E \left[ \tau \left( G(n,p,d) \right) \right] 
\geq c_p n^{3} / \sqrt{d}, 
\end{equation}
so the gap between the expectations remains.  
Furthermore, it can also be shown that the variance also decreases for $G(n,p,d)$ and we have 
\begin{equation}\label{eq:tau_gnpd_var}
 \Var \left( \tau \left( G(n,p,d) \right) \right) \leq n^{3} + \frac{3 n^{4}}{d}.
\end{equation}
Putting everything together and using Exercise~\ref{ex:chebyshev} for the signed triangles statistic $\tau$, 
we get that 
$\TV \left( G(n,p), G(n,p,d) \right) \to 1$ 
if 
$n^{3} / \sqrt{d} \gg \sqrt{n^3 + n^{4} / d}$, 
which is equivalent to $d \ll n^3$. 
This concludes the proof of~\eqref{eq:possible} from Theorem~\ref{thm:gnpd_n3}.

%%%%%%%%%%%%%%%%%%%%%%%%%%%%%%%%%%%%%%%%%%%%%%%%%%%%%%%%%%%%%%%%%%%%%%%%%%%%%%%%%%%%%%%
\subsection{Barrier to detecting geometry: when Wishart becomes GOE} \label{sec:LB} %%%
%%%%%%%%%%%%%%%%%%%%%%%%%%%%%%%%%%%%%%%%%%%%%%%%%%%%%%%%%%%%%%%%%%%%%%%%%%%%%%%%%%%%%%%

We now turn to proving~\eqref{eq:impossible}, which, together with~\eqref{eq:possible}, 
shows that the threshold dimension for detecting geometry is $n^{3}$. 
This also shows that the signed triangle statistic is near-optimal, since it can detect geometry whenever $d \ll n^{3}$.

There are essentially three main ways to bound the total variation of two distributions from above: 
(i) if the distributions have nice formulas associated with them, then exact computation is possible; 
(ii) through \emph{coupling} the distributions; 
or (iii) by using inequalities between probability metrics to switch the problem to bounding a different notion of distance between the distributions. 
Here, while the distribution of $G(n,p,d)$ does not have a nice formula associated with it, 
the main idea is to view this random geometric graph as a function of an 
$n \times n$ Wishart matrix with $d$ degrees of freedom---i.e., a matrix of inner products of $n$ $d$-dimensional Gaussian vectors---denoted by $W(n,d)$. 
It turns out that one can view $G(n,p)$ as (essentially) the same function of an $n \times n$ GOE random matrix---i.e., a symmetric matrix with i.i.d.\ Gaussian entries on and above the diagonal---denoted by $M(n)$. 
The upside of this is that both of these random matrix ensembles have explicit densities that allow for explicit computation. 
We explain this connection here in the special case of $p = 1/2$ for simplicity; see~\cite{BDER16} for the case of general $p$.

Recall that if $Y_1$ is a standard normal random variable in $\R^d$, then $Y_1 / \left\| Y_1 \right\|$ is uniformly distributed on the sphere $\mathbb{S}^{d-1}$. 
Consequently we can view $G\left( n, 1/2, d \right)$ as a function of an appropriate Wishart matrix, as follows. 
Let $Y$ be an $n \times d$ matrix where the entries are i.i.d.\ standard normal random variables, and let $W \equiv W (n,d) = YY^T$  be the corresponding $n \times n$ Wishart matrix. 
Note that $W_{ii} = \left\langle Y_i, Y_i \right\rangle = \left\| Y_i \right\|^2$ and so 
$\left\langle Y_i / \left\| Y_i \right\|, Y_j / \left\| Y_j \right\| \right\rangle = W_{ij} / \sqrt{W_{ii} W_{jj}}$. 
Thus the $n \times n$ matrix $A$ defined as
\[
 A_{i,j} =
\begin{cases}
 1 & \text{if } W_{ij} \geq 0 \text{ and } i \neq j\\
 0 & \text{otherwise}
\end{cases}
\]
has the same law as the adjacency matrix of $G\left(n,1/2,d\right)$. 
Denote the map that takes $W$ to $A$ by $H$, i.e., $A = H \left( W \right)$.

In a similar way we can view $G \left( n, 1/2 \right)$ as a function of an $n \times n$ matrix drawn from the Gaussian Orthogonal Ensemble (GOE). 
Let $M\left( n \right)$ be a symmetric $n \times n$ random matrix where the diagonal entries are i.i.d.\ normal random variables with mean zero and variance 2, and the entries above the diagonal are i.i.d.\ standard normal random variables, with the entries on and above the diagonal all independent. 
Then $B = H \left( M(n) \right)$ has the same law as the adjacency matrix of $G(n,p)$. 
Note that $B$ only depends on the sign of the off-diagonal elements of $M\left(n \right)$, so in the definition of $B$ we can replace $M\left( n \right)$ with $M \left( n, d \right) := \sqrt{d} M \left( n \right) + d I_n$, where $I_n$ is the $n \times n$ identity matrix. 

We can thus conclude that 
\begin{align*}
 \TV \left( G(n,1/2,d), G(n,1/2) \right) 
&= \TV \left( H \left( W(n,d) \right), H \left( M(n,d) \right) \right) \\
&\leq \TV \left(  W(n,d),  M(n,d) \right).
\end{align*}
The densities of these two random matrix ensembles are well known. 
Let $\mathcal{P} \subset \R^{n^2}$ denote the cone of positive semidefinite matrices. 
When $d \geq n$, $W(n,d)$ has the following density with respect to the Lebesgue measure on $\mathcal{P}$:
\[
 f_{n,d} \left( A \right) := \frac{\left( \det \left( A \right) \right)^{\frac{1}{2} \left( d - n - 1 \right)} \exp \left( - \frac{1}{2} \Tr \left( A \right) \right)}{2^{\frac{1}{2}dn} \pi^{\frac{1}{4} n \left( n-1\right)} \prod_{i=1}^n \Gamma \left( \frac{1}{2} \left( d+1-i \right) \right)},
\]
where $\Tr\left( A \right)$ denotes the trace of the matrix $A$. 
It is also known that the density of a GOE random matrix with respect to the Lebesgue measure on $\R^{n^2}$ is
$A \mapsto \left( 2 \pi \right)^{-\frac{1}{4} n \left( n + 1 \right)} 2^{-\frac{n}{2}} \exp \left( - \frac{1}{4} \Tr \left( A^2 \right) \right)$ 
and so the density of $M \left( n, d \right)$ with respect to the Lebesgue measure on $\R^{n^2}$ is
\[
 g_{n,d} \left( A \right) := \frac{\exp \left( - \frac{1}{4d} \Tr \left( \left( A - d I_n \right)^2 \right) \right)}{\left( 2\pi d \right)^{\frac{1}{4} n \left( n + 1 \right)} 2^{\frac{n}{2}}}.
\]

These explicit formulas allow for explicit calculations. 
In particular, one can show that the log-ratio of the densities is $o(1)$ with probability $1-o(1)$ according to the measure induced by $M(n,d)$. 
This follows from writing out the Taylor expansion of the log-ratio of the densities and using known results about the empirical spectral distribution of Wigner matrices (in particular that it converges to a semi-circle law). 
The outcome of the calculation is the following result, proven independently and simultaneously by Bubeck~et~al.\ and Jiang and Li. 

\begin{theorem}[Bubeck, Ding, Eldan, R\'acz~\cite{BDER16}; Jiang, Li~\cite{jiang2013approximation}]\label{thm:Wishart_GOE}
Define the random matrix ensembles $W\left( n, d \right)$ and $M \left( n, d \right)$ as above. If $d / n^3 \to \infty$, then
\begin{equation*}\label{eq:Wishart_GOE}
  \TV \left( W \left( n, d \right), M \left( n, d \right) \right) \to 0.
\end{equation*}
\end{theorem}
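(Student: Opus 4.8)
\noindent\emph{Proof strategy.}
The plan is to work directly with the explicit density formulas for $W(n,d)$ and $M(n,d)$ recorded above and to show that their likelihood ratio tends to $1$ in probability once $d\gg n^3$. Since $W(n,d)$ is positive semidefinite it is supported on the cone $\mathcal P$, whereas $M(n,d)$ is supported on all symmetric matrices, so
\[
2\,\TV\!\left(W(n,d), M(n,d)\right) \;=\; \int_{\mathcal P}\left| f_{n,d} - g_{n,d} \right| \;+\; \P\!\left(M(n,d)\notin\mathcal P\right).
\]
The first thing I would dispose of is the second (``leakage'') term: writing $M(n,d)=\sqrt d\,M(n)+dI_n$, the least eigenvalue of the GOE matrix $M(n)$ is $-2\sqrt n\,(1+o(1))$ with high probability (convergence of its spectrum to the semicircle law together with a standard concentration bound at the edge), so the least eigenvalue of $M(n,d)$ is $d-2\sqrt{nd}\,(1+o(1))>0$ with high probability as soon as $d\gg n$; hence $\P(M(n,d)\notin\mathcal P)=o(1)$.

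On $\mathcal P$ both densities are positive. Plugging the formulas into $L(A):=\log\bigl(f_{n,d}(A)/g_{n,d}(A)\bigr)$ and expanding $\Tr\!\left((A-dI_n)^2\right)=\Tr(A^2)-2d\Tr(A)+nd^2$, the two terms linear in $\Tr(A)$ partially cancel, leaving
\[
L(A)\;=\;\frac{d-n-1}{2}\,\log\det(A)\;+\;\frac{1}{4d}\,\Tr(A^2)\;-\;\Tr(A)\;+\;\kappa_{n,d},
\]
where $\kappa_{n,d}$ is an explicit constant assembled from the two normalizing constants (in particular it contains $\sum_{i=1}^n\log\Gamma(\tfrac{d+1-i}{2})$). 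I would then diagonalize $A$ and write its eigenvalues as $\mu_i=d(1+x_i)$. Under $W(n,d)$ the extreme eigenvalues of a Wishart matrix of aspect ratio $n/d\to 0$ lie in $d\bigl(1\pm 2\sqrt{n/d}\,(1+o(1))\bigr)$, so with high probability $\max_i|x_i|=O(\sqrt{n/d})=o(1)$ and one may use $\log\det(A)=n\log d+\sum_i\bigl(x_i-\tfrac12 x_i^2+\tfrac13 x_i^3-\cdots\bigr)$.

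The crux is what happens upon substituting this expansion. Grouping terms by the power of the $x_i$'s, the $\Theta(d)$-weighted pieces coming from $\log\det(A)$ cancel, order by order, against the matching pieces of $\tfrac1{4d}\Tr(A^2)-\Tr(A)$ (for instance the $\Theta(d)\sum_i x_i$ contributions sum to $-\tfrac{n+1}{2}\sum_i x_i$, and the $\Theta(d)\sum_i x_i^2$ contributions to $\tfrac{n+1}{4}\sum_i x_i^2$), while the remaining large deterministic terms --- of orders $dn\log d$, $dn$, $n^2\log n$, etc.\ --- are absorbed by the Stirling expansion of $\kappa_{n,d}$; verifying these cancellations carefully is the bulk of the work. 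What survives is a short combination of the rescaled spectral moments $\sum_i x_i^k=\Tr\bigl((W(n,d)/d-I_n)^k\bigr)$: a mean-zero linear term with fluctuations of order $n\cdot\sqrt{n/d}$; a quadratic term whose expectation is $\asymp n\cdot(n/d)$ (the low moments of the centered Wishart are suppressed, $\tfrac1n\E\,\Tr((W(n,d)/d-I_n)^k)=O\bigl((n/d)^{\lceil k/2\rceil}\bigr)$); a cubic term of expected size $O(n^3/d)$ with fluctuations of order $d\cdot(n/d)^{3/2}$; and higher-order remainders that are smaller still. Using concentration of these linear eigenvalue statistics around their means --- which is exactly the point at which ``Wishart is close to the semicircle law'' enters --- one gets $L(A)=O\!\left(n^{3/2}/\sqrt d+n^3/d\right)$ with high probability, hence $o(1)$ precisely when $d\gg n^3$. (This is also where the threshold $n^3$ is forced, by the linear, quadratic, and cubic terms simultaneously.) Therefore $f_{n,d}/g_{n,d}\to 1$ in probability under $W(n,d)$.

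Finally I would convert this into the total-variation bound. The previous step furnishes an event $E\subseteq\mathcal P$ with $\P(W(n,d)\in E)=1-o(1)$ on which $\bigl|f_{n,d}/g_{n,d}-1\bigr|\le\delta_n=o(1)$; on $E$ one has $g_{n,d}\ge f_{n,d}/(1+\delta_n)$, so $\P(M(n,d)\in E)=\int_E g_{n,d}\ge(1-o(1))/(1+o(1))=1-o(1)$ as well, and then
\[
\int_{\mathcal P}\left| f_{n,d}-g_{n,d}\right| \;\le\; \delta_n\int_E g_{n,d} \;+\; \int_{\mathcal P\setminus E}\!\left(f_{n,d}+g_{n,d}\right) \;\le\; \delta_n + \P(W(n,d)\notin E) + \P(M(n,d)\notin E) \;=\; o(1).
\]
Combining with the leakage bound gives $\TV(W(n,d),M(n,d))\to 0$, which is Theorem~\ref{thm:Wishart_GOE}. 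I expect the cancellation bookkeeping of the third paragraph, together with the quantitative spectral estimates (extreme eigenvalues and concentration of low-degree linear eigenvalue statistics of the Wishart matrix) needed to make it rigorous, to be the real obstacle; the support-truncation and the final Scheffé-type step are soft.
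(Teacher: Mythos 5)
Your proposal is correct and follows essentially the same route as the paper: write down the explicit Wishart and shifted/scaled GOE densities, Taylor-expand the log-likelihood ratio $L(A)$ in the rescaled eigenvalues, invoke spectral concentration to show $L=o(1)$ with high probability, and convert this to a total-variation bound via a Scheff\'e-type argument after disposing of the support-leakage term. The only minor variation is that you carry out the spectral analysis under $W(n,d)$, using Wishart (Marchenko--Pastur/Bai--Yin-type) estimates, whereas the paper's sketch conditions on $M(n,d)$ and appeals to the Wigner semicircle law; the two are interchangeable here, since both ensembles concentrate at the same scale and either choice produces the $O\!\left(n^{3/2}/\sqrt{d}+n^{3}/d\right)$ bound that forces the $d\asymp n^{3}$ threshold.
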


We conclude that it is impossible to detect underlying geometry whenever $d \gg n^{3}$.

%%%%%%%%%%%%%%%%%%%%%%%%%%%%%%%%%%%%%%%%%%%%%%%%%%%%%%%%%%%%%%%
\subsection{Estimating the dimension} \label{sec:dimension} %%%
%%%%%%%%%%%%%%%%%%%%%%%%%%%%%%%%%%%%%%%%%%%%%%%%%%%%%%%%%%%%%%%

Until now we discussed \emph{detecting} geometry. 
However, the insights gained above allow us to also touch upon the more subtle problem of \emph{estimating} the underlying dimension $d$.

Dimension estimation can also be done by counting the ``number'' of signed triangles as in Section~\ref{sec:signed_triangle}. 
However, here it is necessary to have a bound on the difference of the expected number of signed triangles between consecutive dimensions; 
the lower bound of~\eqref{eq:tau_gnpd} is not enough. 
Still, we believe that the right hand side of~\eqref{eq:tau_gnpd} should give the true value of the expected value for an appropriate constant $c_p$, and hence we expect to have that 
\begin{equation}\label{eq:exp_gap_d}
 \E \left[ \tau \left( G(n,p,d) \right) \right] - \E \left[ \tau \left( G(n,p,d+1) \right) \right] = \Theta \left( \frac{n^{3}}{d^{3/2}} \right).
\end{equation}
Thus, using the variance bound in~\eqref{eq:tau_gnpd_var}, 
we get that dimension estimation should be possible using signed triangles whenever 
$n^{3} / d^{3/2} \gg \sqrt{n^3 + n^{4} / d}$, 
which is equivalent to $d \ll n$.

Showing~\eqref{eq:exp_gap_d} for general $p$ seems involved; Bubeck~et~al.\ showed that it holds for $p = 1/2$, which can be considered as a proof of concept. We thus have the following. 
\begin{theorem}[Bubeck, Ding, Eldan, R\'acz~\cite{BDER16}]\label{thm:dim_est}
 There exists a universal constant $C > 0$ such that for all integers $n$ and $d_1 < d_2$, one has
\[
 \TV \left( G (n, 1/2, d_1), G(n,1/2, d_2) \right) \geq 1 - C \left( \frac{d_1}{n} \right)^{2}.
\]
\end{theorem}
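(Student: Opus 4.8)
The plan is to find a test statistic whose law separates $G(n,1/2,d_1)$ from $G(n,1/2,d_2)$, and the natural candidate — following Section~\ref{sec:signed_triangle} — is the signed-triangle count $\tau$. Write $\mu_d := \E[\tau(G(n,1/2,d))]$ and $\sigma_d^2 := \Var(\tau(G(n,1/2,d)))$. Exactly as in Exercise~\ref{ex:chebyshev}, but keeping track of constants, a threshold test at $\theta = (\mu_{d_1} + \mu_{d_2})/2$ followed by Chebyshev's inequality yields, whenever $\mu_{d_1}\neq\mu_{d_2}$,
\[
\TV\big(G(n,1/2,d_1), G(n,1/2,d_2)\big) \;\geq\; 1 - \frac{4\big(\sigma_{d_1}^2 + \sigma_{d_2}^2\big)}{\big(\mu_{d_1} - \mu_{d_2}\big)^2}.
\]
So the whole statement reduces to a lower bound on the expectation gap $|\mu_{d_1}-\mu_{d_2}|$ together with an upper bound on the two variances. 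Since the asserted inequality is vacuous as soon as $C(d_1/n)^2 \ge 1$, I may assume $d_1 \le n$ throughout.

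For the variances I would just invoke \eqref{eq:tau_gnpd_var}: $\sigma_d^2 \le n^3 + 3n^4/d$, and because $d_2 > d_1$ this gives $\sigma_{d_1}^2 + \sigma_{d_2}^2 \le 2n^3 + 6n^4/d_1$. The substance of the proof is the expectation gap, where the one-sided estimate \eqref{eq:tau_gnpd} on $\mu_d$ is insufficient — one needs the next-order behavior in $d$, i.e.\ \eqref{eq:exp_gap_d} specialized to $p=1/2$. Concretely, since $A_{ij}-\tfrac12 = \tfrac12\operatorname{sign}\langle X_i,X_j\rangle$ when $p=1/2$, linearity of expectation gives $\mu_d = \tfrac18\binom{n}{3}\,\E\big[\operatorname{sign}\langle X_1,X_2\rangle\operatorname{sign}\langle X_1,X_3\rangle\operatorname{sign}\langle X_2,X_3\rangle\big]$ for $X_1,X_2,X_3$ i.i.d.\ uniform on $\mathbb{S}^{d-1}$. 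The job is to expand this spherical quantity in $d$: the concentration-of-measure heuristic sketched around Figure~\ref{fig:triangle_gnpd} shows the leading correction is of order $1/\sqrt d$, and one must push the expansion one step further, pinning down the constant and the sign, to conclude $\mu_d - \mu_{d+1} = \Theta\big(n^3 d^{-3/2}\big)$ with a positive sign. I would carry this out by conditioning on $X_1$, so that the three inner products become coordinates of $X_2,X_3$ with explicit marginal densities $\propto (1-x^2)^{(d-3)/2}$, and then applying a uniform Laplace/Taylor analysis (or the exact integral formulas for orthant-type spherical probabilities), extracting the consecutive-dimension difference from the resulting expansion. Granting this, telescoping $\mu_{d_1}-\mu_{d_2} = \sum_{d=d_1}^{d_2-1}(\mu_d-\mu_{d+1})$ over positive terms and keeping only the $d=d_1$ term gives $\mu_{d_1}-\mu_{d_2} \ge c\,n^3 d_1^{-3/2}$ for a universal $c>0$; in particular $\mu_{d_1}>\mu_{d_2}$.

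Assembling the pieces, $(\mu_{d_1}-\mu_{d_2})^2 \ge c^2 n^6 d_1^{-3}$, so
\[
\frac{4\big(\sigma_{d_1}^2+\sigma_{d_2}^2\big)}{\big(\mu_{d_1}-\mu_{d_2}\big)^2} \;\le\; \frac{4\big(2n^3 + 6n^4/d_1\big)}{c^2 n^6 d_1^{-3}} \;=\; \frac{1}{c^2}\Big(8\,\frac{d_1^3}{n^3} + 24\,\frac{d_1^2}{n^2}\Big) \;\le\; \frac{32}{c^2}\Big(\frac{d_1}{n}\Big)^2,
\]
where the last step uses $d_1 \le n$. Taking $C = \max\{1, 32/c^2\}$ completes the argument (the case $d_1 > n$ being trivial with this choice of $C$). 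The one genuinely hard step is the middle one: establishing \eqref{eq:exp_gap_d} at $p=1/2$, i.e.\ identifying the $d^{-3/2}$-order increment of the signed-triangle expectation together with its constant and sign, uniformly over all $d$ — a concrete but delicate piece of high-dimensional spherical geometry; everything else is bookkeeping on top of facts already recorded in this lecture.
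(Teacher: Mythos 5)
Your route is exactly the one the paper sketches: the signed-triangle statistic $\tau$, the variance bound \eqref{eq:tau_gnpd_var}, and --- as the crux --- the consecutive-dimension expectation gap \eqref{eq:exp_gap_d} specialized to $p=1/2$, assembled via a Chebyshev threshold test. The paper likewise identifies the expectation gap as the genuinely hard step and defers its proof to~\cite{BDER16}; your telescoping, the use of $d_1\le n$, and the constant-chasing to reach the $(d_1/n)^2$ form are all correct, while your Laplace/Taylor sketch of the spherical-integral expansion is a plausible outline of the cited estimate but not a complete proof of it, which matches the level of detail the lecture notes themselves provide.
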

This result is tight, as demonstrated by a result of Eldan~\cite{Eldan11}, 
which states that when $d \gg n$, 
the Wishart matrices $W(n,d)$ and $W(n,d+1)$ are indistinguishable. 
By the discussion in Section~\ref{sec:LB}, this directly implies that 
$G(n,1/2,d)$ and $G(n,1/2,d+1)$ are indistinguishable. 
\begin{theorem}[Eldan~\cite{Eldan11}]\label{eq:Ronen_LB}
 There exists a universal constant $C > 0$ such that for all integers $n < d$, 
\[
\TV \left( G(n,1/2,d), G(n,1/2,d+1) \right) 
\leq \TV \left( W(n,d), W(n,d+1) \right) 
\leq C \sqrt{\left( \frac{d+1}{d-n} \right)^{2} - 1}.
\]
\end{theorem}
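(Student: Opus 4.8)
The first inequality is free: by the construction in Section~\ref{sec:LB}, $G(n,1/2,d)$ and $G(n,1/2,d+1)$ are distributed as $H(W(n,d))$ and $H(W(n,d+1))$ for one and the same deterministic measurable map $H$, and applying a fixed map cannot increase total variation distance. So the real task is the second inequality, and since the hypothesis $n<d$ forces both $W(n,d)$ and $W(n,d+1)$ to admit densities with respect to Lebesgue measure on the cone $\mathcal{P}$ of positive semidefinite matrices, the plan is not to couple the ensembles but to route through the $\chi^{2}$-divergence. Writing $f_{n,m}$ for the Wishart density recorded in Section~\ref{sec:LB}, Cauchy--Schwarz gives
\[ 2\,\TV\!\left(W(n,d+1),W(n,d)\right) = \int_{\mathcal{P}} \left| f_{n,d+1} - f_{n,d} \right| \;\le\; \left( \int_{\mathcal{P}} \frac{f_{n,d+1}^{2}}{f_{n,d}} - 1 \right)^{1/2} = \sqrt{ \chi^{2}\!\left( W(n,d+1) \,\|\, W(n,d) \right) }. \]

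The one \emph{structural} observation that makes this tractable is that the likelihood ratio is, up to a deterministic constant, a single power of the determinant. From the explicit formula for $f_{n,m}$ the factors $\pi^{\frac14 n(n-1)}$ cancel and $f_{n,d+1}(A)/f_{n,d}(A) = c_{n,d}\,(\det A)^{1/2}$, where $c_{n,d} = 2^{-n/2}\prod_{i=1}^{n}\Gamma\!\left(\tfrac{d+1-i}{2}\right)/\Gamma\!\left(\tfrac{d+2-i}{2}\right)$. Hence $\chi^{2}(W(n,d+1)\,\|\,W(n,d)) + 1 = c_{n,d}\,\E\!\left[(\det W(n,d+1))^{1/2}\right]$, and I would compute the remaining moment with the classical identity (via the Bartlett decomposition, which presents $\det W(n,m)$ as a product of independent $\chi^{2}$ variables with $m, m-1, \dots, m-n+1$ degrees of freedom)
\[ \E\!\left[(\det W(n,m))^{s}\right] = 2^{ns}\prod_{i=1}^{n}\frac{\Gamma\!\left(\tfrac{m+1-i}{2}+s\right)}{\Gamma\!\left(\tfrac{m+1-i}{2}\right)}, \qquad s \ge 0. \]
Taking $m = d+1$ and $s = 1/2$, all powers of $2$ cancel against $c_{n,d}$ and the expression collapses to
\[ \chi^{2}\!\left( W(n,d+1) \,\|\, W(n,d) \right) + 1 = \prod_{i=1}^{n}\frac{\Gamma(a_i)\,\Gamma(a_i+1)}{\Gamma\!\left(a_i + \tfrac12\right)^{2}}, \qquad a_i := \frac{d+1-i}{2}. \]

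The finish is two elementary moves. Using $\Gamma(a_i+1) = a_i\Gamma(a_i)$, each factor equals $a_i\left(\Gamma(a_i)/\Gamma(a_i+\tfrac12)\right)^{2}$; and Gautschi's inequality $\Gamma(x+1)/\Gamma(x+\tfrac12) \ge x^{1/2}$, applied with $x = a_i - \tfrac12$ (which is $>0$ precisely because $i \le n < d$), gives $\Gamma(a_i)/\Gamma(a_i+\tfrac12) \le (a_i-\tfrac12)^{-1/2}$, so each factor is at most $a_i/(a_i-\tfrac12) = (d+1-i)/(d-i)$. The product over $i=1,\dots,n$ telescopes to $d/(d-n)$, so $\chi^{2}(W(n,d+1)\,\|\,W(n,d)) \le n/(d-n)$ and hence $\TV(W(n,d),W(n,d+1)) \le \tfrac12\sqrt{n/(d-n)}$. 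Finally $(\tfrac{d+1}{d-n})^{2} - 1 = (n+1)(2d+1-n)/(d-n)^{2} \ge n/(d-n)$ whenever $d > n$, so the stated bound holds (with $C = \tfrac12$).

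The only genuinely inventive step is choosing the $\chi^{2}$-divergence in the direction of $W(n,d+1)$ relative to $W(n,d)$: this is what turns the likelihood ratio into $(\det A)^{1/2}$ and reduces the whole problem to a classical Wishart determinant-moment computation, after which nothing remains but bookkeeping with Gamma functions. The points to watch are that $\chi^{2}$ is indeed finite here ($(\det A)^{1/2}$ is integrable against $f_{n,d+1}$, equivalently all the Gamma arguments above are positive), and that the hypothesis $n < d$ is exactly what legitimizes both the density formula and the application of Gautschi's inequality for every $i \le n$.
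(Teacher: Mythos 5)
Your proof is correct, and it is worth noting at the outset that the paper does not actually reproduce a proof of this theorem---it simply quotes the result from Eldan~\cite{Eldan11}---so there is nothing in the source to compare against line by line. All the constituent pieces check out: the data-processing step for the first inequality is exactly right, and for the second, the Cauchy--Schwarz passage from $L^{1}$ to $\chi^{2}$ is the standard one, since
$\int |f_{n,d+1}-f_{n,d}| = \int \left| \tfrac{f_{n,d+1}}{f_{n,d}} - 1 \right| f_{n,d} \le \left( \int \tfrac{f_{n,d+1}^{2}}{f_{n,d}} - 1 \right)^{1/2}$.
Your computation of the likelihood ratio as $c_{n,d}(\det A)^{1/2}$ with $c_{n,d} = 2^{-n/2}\prod_{i=1}^{n}\Gamma\bigl(\tfrac{d+1-i}{2}\bigr)/\Gamma\bigl(\tfrac{d+2-i}{2}\bigr)$ follows directly from the density formula quoted in Section~\ref{sec:LB}, the half-moment $\E[(\det W(n,d+1))^{1/2}] = 2^{n/2}\prod_{i=1}^{n}\Gamma(a_i+1)/\Gamma(a_i+\tfrac12)$ is correct via the Bartlett decomposition into independent $\chi^2$ factors of $d+1, d, \dots, d+2-n$ degrees of freedom, and the powers of two do cancel cleanly against $c_{n,d}$. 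The Gautschi step is applied with the argument $x=a_i-\tfrac12 = \tfrac{d-i}{2} > 0$, which holds for all $i\le n$ precisely because $n<d$, and the telescoping $\prod_{i=1}^n \tfrac{d+1-i}{d-i} = \tfrac{d}{d-n}$ gives $\chi^2 \le \tfrac{n}{d-n}$, hence $\TV \le \tfrac12\sqrt{n/(d-n)}$, which is in fact slightly sharper than the bound in the statement (since $(n+1)(2d+1-n) \ge n(d-n)$ for $d>n$). The one genuine choice you made---taking the $\chi^2$ divergence of the higher-degree ensemble relative to the lower one, so that the likelihood ratio is a \emph{positive} power of the determinant and the moment computation needs no integrability caveats near the boundary of the cone---is the right one, and the alternative direction (which would require a negative half-moment of $\det W(n,d)$) would also work under the hypothesis $n<d$ but is messier. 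This is a clean, self-contained derivation.
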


%%%%%%%%%%%%%%%%%%%%%%%%%%%%%%%%%%%%%%%%%%%%%%%%%%%%%%%%%%%%%%%%
\subsection{The mysterious sparse regime} \label{sec:sparse} %%%
%%%%%%%%%%%%%%%%%%%%%%%%%%%%%%%%%%%%%%%%%%%%%%%%%%%%%%%%%%%%%%%%

The discussion so far has focused on dense graphs, i.e., assuming $p \in (0,1)$ is constant, 
where Theorem~\ref{thm:gnpd_n3} tightly characterizes when the underlying geometry can be detected. 
The same questions are interesting for \emph{sparse graphs} as well, where the average degree is constant or slowly growing with $n$. 
However, since there are so few edges, this regime is much more challenging.

It is again natural to consider the number of triangles as a way to distinguish between $G(n,c/n)$ and $G(n, c/n,d)$. 
A calculation shows that this statistic works whenever 
$d \ll \log^{3} \left( n \right)$. 
\begin{theorem}[Bubeck, Ding, Eldan, R\'acz~\cite{BDER16}]\label{thm:sparse}
Let $c > 0$ be fixed and assume $d / \log^{3} \left( n \right) \to 0$. 
Then 
\[
\TV \left( G\left( n, \frac{c}{n} \right),  G\left( n, \frac{c}{n}, d \right) \right) \to 1.
\]
\end{theorem}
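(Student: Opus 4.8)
The plan is to use the (unsigned) triangle count $T(G)=\sum_{\{i,j,k\}\in\binom{[n]}{3}}A_{ij}A_{ik}A_{jk}$ from Section~\ref{sec:triangle} as the test statistic and to separate the two distributions through its first two moments, invoking Exercise~\ref{ex:chebyshev}. On the Erd\H{o}s-R\'enyi side everything is immediate: $\E\!\left[T(G(n,c/n))\right]=\binom{n}{3}(c/n)^3\to c^3/6$, and Exercise~\ref{ex:var} with $p=c/n$ gives $\Var\!\left(T(G(n,c/n))\right)=\binom{n}{3}\big((c/n)^3-(c/n)^6\big)+6\binom{n}{4}\big((c/n)^5-(c/n)^6\big)\to c^3/6$; both are $\Theta(1)$. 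The whole content of the theorem is therefore the claim that, under the geometric model, $\E\!\left[T(G(n,c/n,d))\right]\to\infty$ whenever $d\ll\log^3 n$. Given this, the gap in expectations tends to infinity while both standard deviations are $o$ of it (shown below), so Exercise~\ref{ex:chebyshev}, applied to $T$, yields $\TV\!\left(G(n,c/n),G(n,c/n,d)\right)\to1$.

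\emph{First moment on the geometric side --- the crux.} Since $\E\!\left[T(G(n,c/n,d))\right]=\binom{n}{3}\,\p(\mathrm{triangle})$ and $\binom{n}{3}\asymp n^3$, the task is to show that the triangle probability exceeds its Erd\H{o}s-R\'enyi value $(c/n)^3$ by a diverging factor. Write $t=t_{p,d}$ and condition on $X_1$. Then $a=\langle X_1,X_2\rangle$ and $b=\langle X_1,X_3\rangle$ are i.i.d.\ with density $f$ proportional to $(1-u^2)^{(d-3)/2}$ on $[-1,1]$, and given $a,b$ one has $\langle X_2,X_3\rangle = ab+\sqrt{(1-a^2)(1-b^2)}\,\langle U,V\rangle$ with $U,V$ independent and uniform on $\mathbb{S}^{d-2}$, so
\[
\p(\mathrm{triangle})\;=\;\iint_{\,a,\,b\,\ge\, t}f(a)\,f(b)\,\bar F\!\left(\frac{t-ab}{\sqrt{(1-a^2)(1-b^2)}}\right)da\,db,
\]
where $\bar F$ is the upper-tail function of the inner product of two independent uniform points of $\mathbb{S}^{d-2}$. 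Concentration of measure on the sphere forces $a,b$ to lie within $O(1/(dt))$ of $t$, and the normalization $\p(\langle X_1,X_2\rangle\ge t)=c/n$ fixes $t\sqrt d\approx\beta:=\sqrt{2\log n}$. Substituting $a\approx b\approx t$ collapses the argument of $\bar F$ to $\tfrac{t-t^2}{1-t^2}=\tfrac{t}{1+t}$, which is strictly below $t$; pushing the (non-Gaussian, large-deviation) asymptotics of $\bar F$ through the expansion, the correction is governed by a cubic term and one gets
\[
\frac{\p(\mathrm{triangle}\text{ in }G(n,c/n,d))}{(c/n)^3}\;=\;\exp\!\Big(\Theta\big((\log n)^{3/2}\,d^{-1/2}\big)\Big)\cdot(\log n)^{O(1)},
\]
which tends to $\infty$ exactly when $d\ll\log^3 n$. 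Making this rigorous --- localizing the double integral, controlling the fluctuations of $a,b$ about $t$, and tracking the lower-order terms in the tail of $\bar F$ that pin down the power $\log^3 n$ --- is the main obstacle; the rest of the proof is bookkeeping.

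\emph{Second moment on the geometric side.} Expand $\Var\!\left(T(G(n,c/n,d))\right)=\sum\Cov\!\big(T_G(i,j,k),T_G(i',j',k')\big)$ over ordered pairs of triples. Pairs sharing zero or one vertex contribute nothing: conditionally on the common vertex (if any) the two triangle indicators are independent, and by rotational invariance of the sphere the conditional triangle probability does not depend on that vertex's position, so it equals $\p(\mathrm{triangle})$ and the covariance vanishes. The only nonzero off-diagonal contributions come from the $O(n^4)$ pairs of triangles sharing an edge; for such a pair the two triangles are conditionally independent given the shared edge and the probability that a third vertex completes either one concentrates, so the joint probability is $\Theta\!\big(\p(\mathrm{triangle})^2/\p(\mathrm{edge})\big)=\Theta\!\big(n\,\p(\mathrm{triangle})^2\big)$, for a total of $O\!\big(n^5\,\p(\mathrm{triangle})^2\big)=O\!\big(\E[T(G(n,c/n,d))]^2/n\big)$. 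Adding the diagonal terms, which sum to at most $\E[T(G(n,c/n,d))]$, gives $\Var\!\left(T(G(n,c/n,d))\right)=O\!\big(\E[T(G(n,c/n,d))]+\E[T(G(n,c/n,d))]^2/n\big)$, hence $\sqrt{\Var\!\left(T(G(n,c/n,d))\right)}=o\!\big(\E[T(G(n,c/n,d))]\big)$ once the first moment diverges.

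\emph{Putting it together.} We get $\big|\E[T(G(n,c/n,d))]-\E[T(G(n,c/n))]\big|=(1-o(1))\E[T(G(n,c/n,d))]\to\infty$, which dominates both $\sqrt{\Var(T(G(n,c/n)))}=\Theta(1)$ and $\sqrt{\Var(T(G(n,c/n,d)))}=o(\E[T(G(n,c/n,d))])$. Exercise~\ref{ex:chebyshev} then gives $\TV(G(n,c/n),G(n,c/n,d))\to1$. The hypothesis $d\ll\log^3 n$ enters only through the first-moment estimate.
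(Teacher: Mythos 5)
Your proposal takes the same approach as the paper: the (unsigned) triangle count together with Exercise~\ref{ex:chebyshev}, with all the content concentrated in showing that $\E\!\left[T(G(n,c/n,d))\right]$ diverges whenever $d\ll\log^3 n$. The paper itself only remarks that ``a calculation shows'' the triangle statistic works in this range, deferring to~\cite{BDER16}; your sketch of that calculation --- the normalization $t\sqrt{d}\approx\sqrt{2\log n}$, the $t\mapsto t/(1+t)$ threshold shift on the third edge after conditioning, the resulting boost $\exp\!\bigl(\Theta((\log n)^{3/2}/\sqrt{d})\bigr)$, and the variance decomposition (exactly zero covariance for triangle pairs sharing at most one vertex, $O\!\bigl(n^5\,\p(\mathrm{triangle})^2\bigr)$ from edge-sharing pairs) --- reproduces the correct outline and the correct threshold, while honestly flagging the rigorization of the first-moment estimate (localizing the integral, tracking the tail of $\bar F$) as the remaining technical work.
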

In contrast with the dense regime, in the sparse regime the signed triangle statistic $\tau$ does not give significantly more power than the triangle statistic $T$. 
This is because in the sparse regime, with high probability, 
the graph does not contain any $4$-vertex subgraph with at least $5$ edges, 
which is where the improvement comes from in the dense regime.

The authors also conjecture that $\log^{3} \left( n \right)$ is the correct order where the transition happens. 
\begin{conjecture}[Bubeck, Ding, Eldan, R\'acz~\cite{BDER16}]\label{conj:sparse}
Let $c > 0$ be fixed and assume $d / \log^{3} \left( n \right) \to \infty$. 
Then 
\[
\TV \left( G\left( n, \frac{c}{n} \right),  G\left( n, \frac{c}{n}, d \right) \right) \to 0.
\]
\end{conjecture}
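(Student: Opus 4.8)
The plan is to control $\TV\big(G(n,c/n),G(n,c/n,d)\big)$ through a second moment ($\chi^{2}$) argument, exploiting that $G(n,c/n,d)$ is, conditionally on the latent configuration $X=(X_{1},\dots,X_{n})$ of points on $\mathbb{S}^{d-1}$, a \emph{deterministic}---hence (degenerately) edge-independent---graph. Write $p=c/n$ and $\mathbf{1}_{ij}(X)=\mathbf{1}\{\langle X_{i},X_{j}\rangle\ge t_{p,d}\}$. The standard identity for testing a mixture against an Erd\H{o}s--R\'enyi null then gives
\[
\chi^{2}\big(G(n,c/n,d)\,\big\|\,G(n,c/n)\big)+1
=\E_{X,X'}\prod_{\{i,j\}\in\binom{[n]}{2}}\left(1+\frac{\left(\mathbf{1}_{ij}(X)-p\right)\left(\mathbf{1}_{ij}(X')-p\right)}{p(1-p)}\right),
\]
where $X,X'$ are independent configurations. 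Since $\TV\le\tfrac12\sqrt{\chi^{2}}$, it suffices to show the right-hand side tends to $1$ whenever $d\gg\log^{3}n$. Expanding the product, and using independence of the pair events together with $\E\big[\mathbf{1}_{ij}(X)-p\big]=0$, the right-hand side equals $\sum_{S\subseteq\binom{[n]}{2}}\phi(S)^{2}/(p(1-p))^{|S|}$, where $\phi(S):=\E_{X}\prod_{e\in S}\left(\mathbf{1}_{e}(X)-p\right)$; moreover $\phi(S)=0$ unless every connected component of the graph $S$ has minimum degree at least $2$ (a leaf vertex integrates out for free), and $\phi$ is multiplicative over connected components. Grouping $S$ into its components gives $\chi^{2}+1\le\exp\!\big(\sum_{H}a_{H}\big)$, the sum over isomorphism types of connected graphs $H$ with $\delta(H)\ge 2$, where $a_{H}:=N_{H}\,\phi(H)^{2}/(p(1-p))^{e(H)}$ and $N_{H}=\Theta(n^{v(H)})$ counts labelled copies of $H$ in $K_{n}$. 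It thus suffices to prove $\sum_{H}a_{H}\to0$ when $d\gg\log^{3}n$.

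The decisive step is the triangle term $a_{\triangle}$, which is the largest and pins down the threshold. Here $\phi(\triangle)=\p\big(T_{G(n,p,d)}(1,2,3)=1\big)-p^{3}$, and the required input is a sharp estimate for the ``closing'' probability: for three independent uniform points on $\mathbb{S}^{d-1}$, conditioned on two of the pairwise inner products exceeding $t_{p,d}$, the probability that the third does as well is $p\cdot\exp\!\big(\Theta\big(d\,t_{p,d}^{3}\big)\big)$. Since $p=c/n$ forces $t_{p,d}=\Theta\big(\sqrt{(\log n)/d}\,\big)$, this yields $\phi(\triangle)=p^{3}\big(\exp\big(\Theta\big((\log n)^{3/2}/\sqrt d\,\big)\big)-1\big)$, and hence $a_{\triangle}=\Theta(n^{3})\,\phi(\triangle)^{2}/(p(1-p))^{3}=\Theta\!\big(c^{3}\big(\exp\big(\Theta((\log n)^{3/2}/\sqrt d)\big)-1\big)^{2}\big)$. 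For $d\gg\log^{3}n$ the exponent is $o(1)$ and $a_{\triangle}=\Theta\big(c^{3}(\log n)^{3}/d\big)\to0$, exactly in the conjectured regime; for $d\ll\log^{3}n$ it blows up, consistently with Theorem~\ref{thm:sparse}. This estimate should be obtainable either from the explicit density $\propto(\det G)^{(d-4)/2}$ of the Gram matrix of three uniform points on $\mathbb{S}^{d-1}$, or by a recursive ``condition on a path, then close it up'' decomposition, in which the path contributes exactly $p^{e-1}$ and all the gain resides in the closing edge.

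The second step is to dominate every other connected $H$ with $\delta(H)\ge2$. For a cycle $C_{k}$ with $k\ge4$ the closing edge joins two path endpoints that are essentially decorrelated, so there is no boost of the triangle type, and a finer look at the same recursion should give that $\phi(C_{k})$ is of strictly smaller order (heuristically $\Theta(p^{k}\log^{2}n/d)$, decaying in $k$), so $\sum_{k\ge4}a_{C_{k}}$ stays under control. For $H$ with $e(H)>v(H)$, each surplus edge supplies an extra factor $n^{\,v(H)-e(H)}\le n^{-1}$ once $N_{H}\asymp n^{v(H)}$ is weighed against $(p(1-p))^{-e(H)}\asymp(n/c)^{e(H)}$, which easily beats the at most $\exp\big(O_{v(H)}(\poly\log n/\sqrt d)\big)$ growth of the boost appearing in $\phi(H)^{2}$. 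Completing the sum over shapes requires a uniform large-deviation bound---that realizing a prescribed pattern of short edges among uniform points on $\mathbb{S}^{d-1}$ is more likely than its independent value $p^{e(H)}$ by at most a factor $\exp\big(O_{v(H)}(\poly\log n/\sqrt d)\big)$---together with a count of the number of graphs $H$ of each order; both should be routine once the triangle computation is in place.

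The hard part is twofold. First, the large-deviation estimates must be simultaneously sharp enough to fix the constant in the triangle exponent (hence the exact location of the threshold at $\log^{3}n$) and robust enough to hold uniformly over all shapes $H$. Second---and this is the real obstruction for $c\ge1$---the bound $\chi^{2}+1\le\exp(\sum_{H}a_{H})$ is vacuous if $\sum_{H}a_{H}$ diverges, which threatens in the supercritical regime, where the graph carries a giant component with $\Theta(n)$ independent cycles; there one expects to need a \emph{truncated} second moment: first restrict both laws to the high-probability event that the graph is locally tree-like, has only $O(1)$ short cycles, and contains no $k$-vertex subgraph with $\ge k+1$ edges for bounded $k$ (the $4$-vertex case of which the excerpt already records), incur only $o(1)$ in total variation by this restriction, and then handle the long-range structure of the giant component by a separate coupling. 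An alternative route that meets the same wall is a directly coupled breadth-first exploration of the two graphs, which needs correction only when the exploration closes a cycle: short cycles are rare and only mildly boosted once $d\gg\log^{3}n$, but accounting for all the cycles of the giant component in the supercritical phase is the same essential difficulty.
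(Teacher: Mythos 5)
The statement you set out to prove is a \emph{conjecture} in the paper (Conjecture~\ref{conj:sparse}), and the authors explicitly say that ``proving or disproving this conjecture remains a challenging open problem.'' There is therefore no proof in the paper to compare against; all the paper offers is the heuristic that for $d \gg \log^3 n$ the two models are locally equivalent (both have asymptotically the same Poisson number of triangles), so any distinguishing statistic would have to be an emergent global one. Your sketch fleshes out one natural line of attack, the $\chi^2$ / second--moment bound, and the bookkeeping you do is correct: the mixture identity for $\chi^2+1$, the vanishing of $\phi(S)$ whenever $S$ has a degree-one vertex, multiplicativity over connected components, the grouping $\chi^2+1\le\exp(\sum_H a_H)$, and the triangle calculation. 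In particular, your identification of $t_{p,d}=\Theta\bigl(\sqrt{\log n/d}\bigr)$ when $p=c/n$ and the resulting boost $\exp\bigl(\Theta(d\,t_{p,d}^3)\bigr)=\exp\bigl(\Theta((\log n)^{3/2}/\sqrt d)\bigr)$ to the triangle probability correctly reproduce, from the upper-bound side, the $\log^3 n$ threshold that Theorem~\ref{thm:sparse} pins down from below.

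That said, what you have is a research program, not a proof, and you are candid about exactly where the gaps are. All three are genuine. First, the wedge-closing estimate $\p(\langle X_2,X_3\rangle\ge t\mid\langle X_1,X_2\rangle\ge t,\,\langle X_1,X_3\rangle\ge t)=p\cdot\exp(\Theta(dt^3))$ is a two-sided large-deviation statement at scale $t\asymp\sqrt{\log n/d}$, which is not the typical-fluctuation scale $1/\sqrt d$ on which the dense-regime Gaussian/concentration-of-measure arguments of~\cite{BDER16} operate; it has to be established with explicit constants from the Beta density of the inner product or the Gram-matrix density, and nothing of that sort is carried out. Second, the uniform bound over connected $H$ with $\delta(H)\ge 2$ requires $\phi(C_k)/p^k$ to decay geometrically in $k$ fast enough to beat the $c^k$ factor arising from $N_{C_k}/(p(1-p))^{k}$; the heuristic you offer for $\phi(C_k)$ is not even consistent with your own triangle estimate at $k=3$, and no decorrelation-along-paths estimate is supplied. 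Third---and this is the central obstruction, which you yourself name---for $c>1$ the giant component carries $\Theta(n)$ surplus edges and the vanilla $\chi^2$ bound is expected to be vacuous: $\chi^2$ dominates $\TV$, and for sparse planted models it routinely diverges even when $\TV\to0$. The truncated-second-moment and coupled-exploration fallbacks you gesture at are precisely where the real difficulty lives, and neither is developed. So the proposal is a plausible and internally consistent plan whose first step recovers the conjectured threshold, but it does not prove the conjecture, which remains open.
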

The main reason for this conjecture is that, 
when $d \gg \log^{3} \left( n \right)$, 
$G(n, c/n)$ and $G(n, c/n, d)$ seem to be locally equivalent; in particular, they both have the same Poisson number of triangles asymptotically. 
Thus the only way to distinguish between them would be to find an emergent global property which is significantly different under the two models, 
but this seems unlikely to exist. 
Proving or disproving this conjecture remains a challenging open problem. 
The best known bound is $n^{3}$ from~\eqref{eq:impossible} (which holds uniformly over $p$).

% %%%%%%%%%%%%%%%%%%%%%%%%%%%%%%%%%%%%%%%%%%%%%%%%%%%%%%%%%%%%%%
% \subsection{Further notes and comments} \label{sec:notes2} %%%
% %%%%%%%%%%%%%%%%%%%%%%%%%%%%%%%%%%%%%%%%%%%%%%%%%%%%%%%%%%%%%%
% 

\newpage

%%%%%%%%%%%%%%%%%%%%%%%%%%%%%%%%%%%%%%%%%%%%%%%%%%%%%%%%%%%%%%%%%%%%%%%%%%%%%%%%%%%%%%%%%%%%%%%%%%%%%%%%%%%%%%%%%%%%%%%%%%%%%%%%%%%%%%%%%%%%%%%%%%%%%%%%%%%%%%%%%%%%%%%%%%%%%%%%%%
\section{Lecture 3: Introduction to entropic central limit theorems and a proof of the fundamental limits of dimension estimation in random geometric graphs} \label{sec:lec3} %%%
%%%%%%%%%%%%%%%%%%%%%%%%%%%%%%%%%%%%%%%%%%%%%%%%%%%%%%%%%%%%%%%%%%%%%%%%%%%%%%%%%%%%%%%%%%%%%%%%%%%%%%%%%%%%%%%%%%%%%%%%%%%%%%%%%%%%%%%%%%%%%%%%%%%%%%%%%%%%%%%%%%%%%%%%%%%%%%%%%%

Recall from the previous lecture that the dimension threshold for detecting geometry in $G(n,p,d)$ for constant $p \in (0,1)$ is $d = \Theta \left( n^{3} \right)$. 
What if the random geometric graph model is not $G(n,p,d)$? 
How robust are the results presented in the previous lecture? 
We have seen that the detection threshold is intimately connected to the threshold of when a Wishart matrix becomes GOE. 
Understanding the robustness of this result on random matrices is interesting in its own right, and this is what we will pursue in this lecture.\footnote{This lecture is based on~\cite{BubeckGanguly15}.} 
Doing so also gives us the opportunity to learn about the fascinating world of entropic central limit theorems.

\subsection{Setup and main result: the universality of the threshold dimension}

Let 
$\mathbb{X}$ 
be an $n \times d$ random matrix with i.i.d.\ entries from a distribution $\mu$ that has mean zero and variance $1$. 
The $n \times n$ matrix $\mathbb{X} \mathbb{X}^{T}$ is known as the Wishart matrix with $d$ degrees of freedom. 
As we have seen in the previous lecture, this arises naturally in geometry, where $\mathbb{X} \mathbb{X}^{T}$ is known as the Gram matrix of inner products of $n$ points in $\R^{d}$. 
The Wishart matrix also appears naturally in statistics as the sample covariance matrix, where $d$ is the number of samples and $n$ is the number of parameters.\footnote{In statistics the number of samples is usually denoted by $n$, and the number of parameters is usually denoted by $p$; here our notation is taken with the geometric perspective in mind.} 
We refer to~\cite{BubeckGanguly15} for further applications in quantum physics, wireless communications, and optimization.

We consider the Wishart matrix with the diagonal removed, and scaled appropriately: 
\[
 \mathcal{W}_{n,d} = \frac{1}{\sqrt{d}} \left( \mathbb{X} \mathbb{X}^{T} - \diag \left( \mathbb{X} \mathbb{X}^{T} \right) \right).
\]
In many applications---such as to random graphs, as we have seen in the previous lecture---the diagonal of the matrix is not relevant, so removing it does not lose information. 
Our goal is to understand how large does the dimension $d$ have to be so that $\mathcal{W}_{n,d}$ is approximately like $\mathcal{G}_{n}$, 
which is defined as the $n \times n$ Wigner matrix with zeros on the diagonal and i.i.d.\ standard Gaussians above the diagonal. 
In other words, $\mathcal{G}_{n}$ is drawn from the Gaussian Orthogonal Ensemble (GOE) with the diagonal replaced with zeros.

A simple application of the multivariate central limit theorem gives that if $n$ is fixed and $d \to \infty$, then $\mathcal{W}_{n,d}$ converges to $\mathcal{G}_{n}$ in distribution. 
The main result of Bubeck and Ganguly~\cite{BubeckGanguly15} establishes that this holds as long as $d \, \widetilde{\gg} \, n^{3}$ under rather general conditions on the distribution $\mu$. 
\begin{theorem}[Bubeck and Ganguly~\cite{BubeckGanguly15}]
If the distribution $\mu$ is log-concave\footnote{A measure $\mu$ with density $f$ is said to be log-concave if $f(\cdot) = e^{-\varphi(\cdot)}$ for some convex function $\varphi$.} and $\frac{d}{n^{3} \log^{2} \left( d \right)} \to \infty$, then 
\begin{equation}\label{eq:TV_to_0}
\TV \left( \mathcal{W}_{n,d}, \mathcal{G}_{n} \right) \to 0.
\end{equation}
On the other hand, if $\mu$ has a finite fourth moment and $\frac{d}{n^{3}} \to 0$, then 
\begin{equation}\label{eq:general_signed_triangles}
\TV \left( \mathcal{W}_{n,d}, \mathcal{G}_{n} \right) \to 1.
\end{equation}
\end{theorem}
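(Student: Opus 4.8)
\emph{The regime $d\ll n^{3}$ ($\TV\to 1$).}
The plan is to run the signed-triangle argument of Lecture~2 directly on the matrix $\mathcal{W}_{n,d}$. Since the entries of $\mu$ already have mean zero, no recentering is needed, and the natural statistic is $\tau(\mathcal{W}_{n,d}):=\sum_{\{i,j,k\}\in\binom{[n]}{3}}\mathcal{W}_{ij}\mathcal{W}_{jk}\mathcal{W}_{ki}$ (equivalently $\tfrac{1}{6}\Tr(\mathcal{W}_{n,d}^{3})$, the diagonal being zero). First I would compute the means: under $\mathcal{G}_{n}$ the off-diagonal entries are independent and centered, so $\E[\tau(\mathcal{G}_{n})]=0$; under $\mathcal{W}_{n,d}$, expanding $\mathcal{W}_{ij}=d^{-1/2}\sum_{a}X_{ia}X_{ja}$ and noting that the only triples of column indices contributing in expectation are those with all three equal, one gets $\E[\mathcal{W}_{ij}\mathcal{W}_{jk}\mathcal{W}_{ki}]=d^{-3/2}\cdot d=d^{-1/2}$ for distinct $i,j,k$, hence $\E[\tau(\mathcal{W}_{n,d})]=\Theta(n^{3}/\sqrt{d})$. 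Next comes the variance: a moment expansion tracking which index patterns across two triangles contribute should give $\Var(\tau(\mathcal{G}_{n}))=\Theta(n^{3})$ and $\Var(\tau(\mathcal{W}_{n,d}))=O(n^{3}+n^{4}/d)$, the finite fourth moment of $\mu$ entering precisely to control the new terms produced by colliding pairs of column indices. The second-moment argument of Exercise~\ref{ex:chebyshev}, applied to $\tau$, then gives $\TV(\mathcal{W}_{n,d},\mathcal{G}_{n})\to1$ whenever $n^{3}/\sqrt{d}\gg\sqrt{n^{3}+n^{4}/d}$, i.e.\ whenever $d\ll n^{3}$. The only genuine work here is the variance bound for $\tau(\mathcal{W}_{n,d})$.

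\emph{The regime $d\,\widetilde{\gg}\,n^{3}$ ($\TV\to 0$).}
Here the plan is an entropic argument refining the explicit density computation behind Theorem~\ref{thm:Wishart_GOE} (which is the instance $\mu=\mathcal{N}(0,1)$). By Pinsker's inequality it suffices to show $D(\mathcal{W}_{n,d}\,\|\,\mathcal{G}_{n})\to0$, where $D$ denotes relative entropy. Write $R_{1},\dots,R_{n}\in\R^{d}$ for the rows of $\mathbb{X}$; processing them in order, the entries of $\mathcal{W}_{n,d}$ freshly determined by $R_{i}$ form the vector $\tfrac{1}{\sqrt{d}}V_{i}$ with $V_{i}:=(\langle R_{i},R_{j}\rangle)_{j<i}\in\R^{i-1}$. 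Since the rows of $\mathcal{G}_{n}$ are independent, the chain rule for relative entropy, followed by convexity of $D$ in its first argument, yields
\[
D(\mathcal{W}_{n,d}\,\|\,\mathcal{G}_{n})\;\le\;\sum_{i=2}^{n}\E_{R_{<i}}\Big[\,D\big(\mathcal{L}(\tfrac{1}{\sqrt{d}}V_{i}\mid R_{<i})\;\big\|\;\mathcal{N}(0,I_{i-1})\big)\Big].
\]
Conditionally on $R_{<i}$, $\tfrac{1}{\sqrt{d}}V_{i}=\tfrac{1}{\sqrt{d}}\sum_{k=1}^{d}X_{ik}w_{k}$ is a normalized sum of $d$ independent centered vectors, where $w_{k}\in\R^{i-1}$ collects the $k$-th coordinates of $R_{1},\dots,R_{i-1}$; its covariance is $\Sigma_{i}:=\tfrac{1}{d}G_{<i}$ with $G_{<i}$ the Gram matrix of $R_{1},\dots,R_{i-1}$, and — this is where log-concavity of $\mu$ enters — each summand is a log-concave measure, so by Pr\'ekopa's theorem the conditional law of $\tfrac{1}{\sqrt{d}}V_{i}$ is itself log-concave.

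I would then split each summand above through the exact identity $D(\nu\,\|\,\mathcal{N}(0,I))=D(\nu\,\|\,\mathcal{N}(0,\Sigma))+D(\mathcal{N}(0,\Sigma)\,\|\,\mathcal{N}(0,I))$, valid whenever $\nu$ is centered with covariance $\Sigma$. The second term equals the explicit $\tfrac{1}{2}(\Tr\Sigma_{i}-\log\det\Sigma_{i}-(i-1))$; since $\|R_{a}\|^{2}=d+O(\sqrt{d})$ and $\langle R_{a},R_{b}\rangle=O(\sqrt{d})$ by concentration, one has $\Sigma_{i}=I+O(1/\sqrt{d})$ entrywise, so this term is $\approx\tfrac{1}{4}\|I-\Sigma_{i}\|_{F}^{2}$ with expectation $O((i-1)^{2}/d)$; summed over $i$ it contributes $O(n^{3}/d)$, which is exactly where the hypothesis $d\gg n^{3}$ is used. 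The first term is a quantitative entropic central limit theorem in dimension $i-1\le n$: for a normalized sum of $d$ independent log-concave vectors, the relative entropy to the Gaussian of matching covariance should be $O(\poly(n)\log^{2}(d)/d)$, uniformly over typical $R_{<i}$ (the atypical $R_{<i}$, where some $\|R_{a}\|^{2}$ deviates grossly from $d$, occurring with probability small enough to be absorbed crudely). Summed over $i$ this again contributes $O(n^{3}\log^{2}(d)/d)\to0$, the $\log^{2}(d)$ coming from the size of the largest summand $\max_{k}\|X_{ik}w_{k}\|$ over the $d$ draws.

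\emph{Main obstacle.}
The heart of the matter is the high-dimensional quantitative entropic CLT needed for the first term: a Berry--Esseen-type estimate, measured in relative entropy, for sums of independent — here degenerate, rank-one — log-concave vectors, with error only polynomial in the dimension $n$. This is exactly where log-concavity is indispensable: it keeps the partial sums smooth and their densities and Fisher information controllable, whether one proceeds via heat-flow/Fisher-information monotonicity or via an Edgeworth-type expansion. It is also where the stray $\log^{2}(d)$ factor — the gap between the hypothesis $d\,\widetilde{\gg}\,n^{3}$ and the conjecturally sharp $d\gg n^{3}$ — is incurred.
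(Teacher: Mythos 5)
Your proposal is correct and takes essentially the same route as the paper: signed triangles ($\Tr(\mathcal{W}^3)$) plus a second-moment/Chebyshev argument for the lower bound on total variation, and Pinsker $\to$ chain rule over rows $\to$ convexity $\to$ isotropy correction $\to$ high-dimensional entropic CLT for the upper bound, with the entropic CLT for sums of independent degenerate log-concave vectors (proved in Bubeck--Ganguly via the Artstein--Ball--Barthe--Naor variational characterization of Fisher information) correctly identified as the crux. Your Pythagorean decomposition $D(\nu\,\|\,\mathcal{N}(0,I))=D(\nu\,\|\,\mathcal{N}(0,\Sigma))+D(\mathcal{N}(0,\Sigma)\,\|\,\mathcal{N}(0,I))$ is exactly equivalent to the paper's whitening lemma. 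Two small points worth flagging. (i) Your variance bound $O(n^3+n^4/d)$ for $\tau(\mathcal{W}_{n,d})$ differs from the paper's stated $\Theta(n^3+n^5/d^2)$; near the threshold $d\asymp n^3$ both collapse to $\Theta(n^3)$, so the conclusion is unaffected, but the bookkeeping of which pairs of triangles (edge-sharing vs.\ vertex-sharing) dominate is worth being careful about. (ii) The remark that atypical $R_{<i}$ can be ``absorbed crudely'' undersells a genuine step: $-\log\det\Sigma_i$ diverges as $\Sigma_i$ approaches singularity, so bounding its expectation requires a quantitative left-tail bound on the smallest singular value of $\tfrac{1}{\sqrt{d}}\mathbb{X}_{<i}$---log-concavity supplies this, and the paper explicitly flags it as a needed ingredient; small probability of a bad event alone does not control the expectation of an unbounded quantity.
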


This result extends Theorems~\ref{thm:gnpd_n3} and~\ref{thm:Wishart_GOE}, and establishes $n^{3}$ as the universal critical dimension (up to logarithmic factors) for sufficiently smooth measures $\mu$: $\mathcal{W}_{n,d}$ is approximately Gaussian if and only if $d$ is much larger than $n^{3}$. 
For random graphs, as seen in Lecture~2, this is the dimension barrier to extracting geometric information from a network: 
if the dimension is much greater than the cube of the number of vertices, then all geometry is lost. 
In the setting of statistics this means that the Gaussian approximation of a Wishart matrix is valid as long as the sample size is much greater than the cube of the number of parameters. 
Note that for some statistics of a Wishart matrix the Gaussian approximation is valid for much smaller sample sizes (e.g., the largest eigenvalue behaves as in the limit even when the number of parameters is on the same order as the sample size~\cite{johnstone2001distribution}).

To distinguish the random matrix ensembles, we have seen in Lecture~2 that signed triangles work up until the threshold dimension in the case when $\mu$ is standard normal. 
It turns out that the same statistic works in this more general setting; 
when the entries of the matrices are centered, this statistic can be written as $A \mapsto \Tr \left( A^{3} \right)$. 
Similarly to the calculations in Section~\ref{sec:signed_triangle}, one can show that under the two measures $\mathcal{W}_{n,d}$ and $\mathcal{G}_{n}$, 
the mean of $\Tr \left( A^{3} \right)$ is $0$ and $\Theta \left( n^{3} / \sqrt{d} \right)$, respectively, 
whereas the variances are $\Theta \left( n^{3} \right)$ and $\Theta \left( n^{3} + n^{5} / d^{2} \right)$, respectively. 
Then~\eqref{eq:general_signed_triangles} follows by an application of Chebyshev's inequality. 
We leave the details as an exercise for the reader.

We note that for~\eqref{eq:TV_to_0} to hold it is necessary to have some smoothness assumption on the distribution $\mu$. 
For instance, if $\mu$ is purely atomic, then so is the distribution of $\mathcal{W}_{n,d}$, and thus its total variation distance to $\mathcal{G}_{n}$ is $1$. 
The log-concave assumption gives this necessary smoothness, and it is an interesting open problem to understand how far this can be relaxed.

\subsection{Pinsker's inequality: from total variation to relative entropy}

Our goal is now to bound the total variation distance 
$\TV \left( \mathcal{W}_{n,d}, \mathcal{G}_{n} \right)$ 
from above. 
In the general setting considered here there is no nice formula for the density of the Wishart ensemble, 
so %the total variation distance 
$\TV \left( \mathcal{W}_{n,d}, \mathcal{G}_{n} \right)$ 
cannot be computed directly. 
Coupling these two random matrices also seems challenging. 

In light of these observations, it is natural to switch to a different metric on probability distributions that is easier to handle in this case. 
We refer the reader to the excellent paper~\cite{gibbs2002choosing} which gathers ten different probability metrics and many relations between then. 
Here we use Pinsker's inequality to switch to relative entropy: 
\begin{equation}\label{eq:pinsker}
\TV \left( \mathcal{W}_{n,d}, \mathcal{G}_{n} \right)^{2} 
\leq 
\frac{1}{2} \mathrm{Ent} \left( \mathcal{W}_{n,d} \, \| \, \mathcal{G}_{n} \right),
\end{equation}
where $\mathrm{Ent} \left( \mathcal{W}_{n,d} \, \| \, \mathcal{G}_{n} \right)$ denotes the relative entropy of $\mathcal{W}_{n,d}$ with respect to $\mathcal{G}_{n}$. 
In the following subsection we provide a brief introduction to entropy; the reader familiar with the basics can safely skip this. 
We then turn to entropic central limit theorems and techniques involved in their proof, before finally coming back to bounding the right hand side in~\eqref{eq:pinsker}.

\subsection{A brief introduction to entropy}

The \emph{entropy} of a discrete random variable $X$ taking values in $\mathcal{X}$ is defined as 
\begin{equation*}
 H(X) \equiv H(p) = - \sum_{x \in \mathcal{X}} p \left( x \right) \log \left( p \left( x \right) \right),
\end{equation*}
where $p$ denotes the probability mass function of $X$. 
The $\log$ is commonly taken to have base $2$, in which case entropy is measured in bits; 
if one considers the natural logarithm $\ln$ then it is measured in nats. 
Note that entropy is always nonnegative, since $p(x) \leq 1$ for every $x \in \mathcal{X}$.  
This is a measure of uncertainty of a random variable. It measures how much information is required on average to describe the random variable. 
Many properties of entropy agree with the intuition of what a measure of information should be. 
A useful way of thinking about entropy is the following: if we have an i.i.d.\ sequence of random variables and we know that the source distribution is $p$, then we can construct a code with average description length $H(p)$. 
\begin{example}
If $X$ is uniform on a finite space $\mathcal{X}$, then $H(X) = \log \left| \mathcal{X} \right|$. 
\end{example}

For continuous random variables the \emph{differential entropy} is defined as 
\begin{equation*}
h\left( X \right) \equiv h \left( f \right) = - \int f \left( x \right) \log f \left( x \right) dx,
\end{equation*}
where $f$ is the density of the random variable $X$. 
\begin{example}
If $X$ is uniform on the interval $[0,a]$, then $h(X) = \log \left( a \right)$. 
If $X$ is Gaussian with mean zero and variance $\sigma^{2}$, then $h(X) = \tfrac{1}{2} \log \left( 2 \pi e \sigma^{2} \right)$. 
\end{example}
Note that these examples show that differential entropy can be negative. 
One way to think of differential entropy is to think of $2^{h(X)}$ as ``the volume of the support''.

The \emph{relative entropy} of two distributions $P$ and $Q$ on a discrete space $\mathcal{X}$ is defined as 
\[
 D \left( P \, \| \, Q \right) = \sum_{x \in \mathcal{X}} P(x) \log \frac{P(x)}{Q(x)}.
\]
For two distributions with densities $f$ and $g$ the relative entropy is defined as 
\[
 D \left( f \, \| \, g \right) = \int_{x \in \mathcal{X}} f(x) \log \frac{f(x)}{g(x)}.
\]
Relative entropy is always nonnegative; this follows from Jensen's inequality. 
Relative entropy can be interpreted as a measure of distance between two distributions, although it is not a metric: it is not symmetric and it does not obey the triangle inequality. 
It can be thought of as a measure of inefficiency of assuming that the source distribution is $q$ when it is really $p$. 
If we use a code for distribution $q$ but the source is really from $p$, then we need 
$H(p) + D(p \, \| \, q)$ 
bits on average to describe the random variable.

In the following we use $\mathrm{Ent}$ to denote all notions of entropy and relative entropy. 
We also slightly abuse notation and interchangeably use a random variable or its law in the argument of entropy and relative entropy. 

Entropy and relative entropy satisfy useful chain rules; we leave the proof of the following identities as an exercise for the reader. 
For entropy we have:
\[
 \mathrm{Ent} \left( X_{1}, X_{2} \right) = \mathrm{Ent} \left( X_{1} \right) + \mathrm{Ent} \left( X_{2} \, \middle| \, X_{1} \right).
\]
For relative entropy we have:  
\begin{equation}\label{eq:chain_rel_entropy}
 \mathrm{Ent} \left( \left( Y_{1}, Y_{2} \right) \, \| \, \left( Z_{1}, Z_{2} \right) \right) 
= \mathrm{Ent} \left( Y_{1} \, \| \, Z_{1} \right) 
+ \E_{y \sim \lambda_{1}} \mathrm{Ent} \left( Y_{2} \, | \, Y_{1} = y \, \| \, Z_{2} \, | \, Z_{1} = y  \right), 
\end{equation}
where $\lambda_{1}$ is the marginal distribution of $Y_{1}$ 
and $Y_{2} \, | \, Y_{1} = y$ denotes the distribution of $Y_{2}$ conditionally on the event $\left\{ Y_{1} = y \right\}$.

Let $\phi$ denote the density of $\gamma_{n}$, the $n$-dimensional standard Gaussian distribution, 
and let $f$ be an isotropic density with mean zero, i.e., a density for which the covariance matrix is the identity $I_{n}$. 
Then 
\begin{align*}
0 \leq \mathrm{Ent} \left( f \, \| \, \phi \right) &= \int f \log f - \int f \log \phi \\
&= \int f \log f - \int \phi \log \phi = \mathrm{Ent} \left( \phi \right) - \mathrm{Ent} \left( f \right),
\end{align*}
where the second equality follows from the fact that $\log \phi \left( x \right)$ is quadratic in $x$, and the first two moments of $f$ and $\phi$ are the same by assumption. 
We thus see that the standard Gaussian maximizes entropy among isotropic densities.

\subsection{An introduction to entropic CLTs}

At this point we are ready to state the entropic central limit theorem. 
The central limit theorem states that if $Z_{1}, Z_{2}, \dots$ are i.i.d.\ real-valued random variables with zero mean and unit variance, 
then $S_{m} := \left( Z_{1} + \dots + Z_{m} \right) / \sqrt{m}$ converges in distribution to a standard Gaussian random variable as $m \to \infty$. 
There are many other senses in which $S_{m}$ converges to a standard Gaussian, the entropic CLT being one of them. 
\begin{theorem}[Entropic CLT]
Let $Z_{1}, Z_{2}, \dots$ be i.i.d.\ real-valued random variables with zero mean and unit variance, 
and let $S_{m} := \left( Z_{1} + \dots + Z_{m} \right) / \sqrt{m}$. 
If $\mathrm{Ent} \left( Z_{1} \, \| \, \phi \right) < \infty$, then 
\[
 \mathrm{Ent} \left( S_{m} \right) \nearrow \mathrm{Ent} \left( \phi \right)
\]
as $m \to \infty$. 
Moreover, the entropy of $S_{m}$ increases monotonically, i.e., 
$\mathrm{Ent} \left( S_{m} \right) \leq \mathrm{Ent} \left( S_{m+1} \right)$ 
for every $m \geq 1$. 
\end{theorem}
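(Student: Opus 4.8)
The plan is to transfer everything to the language of \emph{Fisher information} and exploit its behaviour under convolution. For a real random variable $X$ with a smooth density $f$, let $I(X) := \int (f'/f)^2 f$ denote its Fisher information; recall Stam's inequality, which here says $I(X) \ge 1$ whenever $\Var(X) = 1$, with equality iff $X$ is standard Gaussian. The bridge between entropy and Fisher information is de Bruijn's identity: if $G \sim \cN(0,1)$ is independent of $X$ and $Y_t := X + \sqrt{t}\,G$, then $\frac{d}{dt}\,\mathrm{Ent}(Y_t) = \frac12 I(Y_t)$. Comparing with the Gaussian heat flow $\phi_t = \cN(0,1+t)$ and integrating over $t \in (0,\infty)$ (the boundary terms cancel, since $\mathrm{Ent}(Y_t) - \mathrm{Ent}(\phi_t) \to 0$ as $t \to \infty$) gives, for any $X$ with mean zero and unit variance, the integral representation
\[
 \mathrm{Ent}(\phi) - \mathrm{Ent}(X) = \mathrm{Ent}(X \, \| \, \phi) = \frac12 \int_0^\infty \left( I(X + \sqrt{t}\,G) - \frac{1}{1+t} \right) dt ,
\]
in which the integrand is nonnegative by Stam's inequality. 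So it is enough to understand $I(S_m + \sqrt{t}\,G)$ for each fixed $t$.

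First I would use a self-similarity trick that simultaneously produces densities for free. Since the $Z_i$ are i.i.d., $S_m + \sqrt{t}\,G$ has the same law as $\frac{1}{\sqrt m}\sum_{i=1}^m (Z_i + \sqrt{t}\,G_i)$ with $G_1,\dots,G_m$ i.i.d.\ standard Gaussian; rescaling by $\sqrt{1+t}$ this equals $\sqrt{1+t}$ times the normalized sum $T_m := \frac{1}{\sqrt m}\sum_{i=1}^m W_i$ of $m$ i.i.d.\ copies of the unit-variance variable $W := (Z_1 + \sqrt{t}\,G)/\sqrt{1+t}$, which is smooth and has finite Fisher information for every $t > 0$. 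Using the scaling $I(cX) = c^{-2}I(X)$ with $c = \sqrt{1+t}$, the integrand becomes $\frac{1}{2(1+t)}(I(T_m) - 1)$. Thus both claims follow from two facts about normalized sums of i.i.d.\ smooth summands: \textbf{(a)} $I(T_{m+1}) \le I(T_m)$ for all $m$ (monotonicity of Fisher information along the CLT), and \textbf{(b)} $I(T_m) \to 1$. Indeed (a) makes the integrand pointwise non-increasing in $m$, hence $\mathrm{Ent}(S_{m+1}\|\phi) \le \mathrm{Ent}(S_m\|\phi)$, i.e.\ $\mathrm{Ent}(S_m)$ is non-decreasing; while (b), together with dominated convergence (the hypothesis $\mathrm{Ent}(Z_1\|\phi) < \infty$ is exactly the statement that the $m=1$ integrand is integrable, providing the dominating function through (a)), gives $\mathrm{Ent}(S_m\|\phi) \to 0$.

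The hard part is (a), and here I would follow the argument of Artstein, Ball, Barthe and Naor. The key structural fact is a \emph{projection formula} for the score function of a sum: writing $V = \sum_{i=1}^{n} W_i$ and $V^{(j)} = \sum_{i \neq j} W_i$, the score $\rho_V(V) = (\log f_V)'(V)$ equals the conditional expectation, given $V$, of a symmetric linear combination of the scores $\rho_{V^{(j)}}(V^{(j)})$ of the leave-one-out subsums. Feeding this into a conditional Cauchy--Schwarz / ``variance-drop'' estimate (the ANOVA-type inequality for the $L^2$ norm of an average of projections of statistics of overlapping variable sets) converts the identity into the sharp inequality
\[
 I\!\left( \sum_{i=1}^{n} W_i \right) \le \frac{n-1}{n^{2}} \sum_{j=1}^{n} I\!\left( \sum_{i \neq j} W_i \right),
\]
which is an equality for Gaussians; specializing to i.i.d.\ $W_i$ with $n = m+1$ and rescaling each sum to unit variance, this rearranges exactly to $I(T_{m+1}) \le I(T_m)$. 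Establishing the projection lemma and getting the combinatorial constant in the Cauchy--Schwarz step to land on the sharp value $\frac{n-1}{n^2}$ is where essentially all the work lies; I expect this to be the main obstacle.

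For (b): by (a) the non-increasing sequence $I(T_m)$ has a limit $\ell \ge 1$, and the ordinary CLT gives $T_m \Rightarrow \cN(0,1)$, so lower semicontinuity of Fisher information under weak convergence already yields $\ell \ge 1$. To rule out $\ell > 1$ I would invoke a quantitative ``Fisher information jump'' inequality: there is $\delta = \delta(W) > 0$, with $\delta > 0$ unless $W$ is Gaussian, such that $I(T_{2m}) \le (1 - \delta)\,I(T_m) + \delta$ (a stability strengthening of Stam's inequality along doubling). Iterating this forces $\ell = 1$, hence $I(T_m) \to 1$. Plugging (a) and (b) back into the integral representation completes the proof that $\mathrm{Ent}(S_m)$ increases monotonically to $\mathrm{Ent}(\phi)$.
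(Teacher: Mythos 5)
Your proposal tracks the route the paper outlines: de Bruijn's identity (you use additive Gaussian noise where the paper uses the Ornstein--Uhlenbeck semigroup, but these are equivalent reparametrizations) reduces entropy monotonicity to Fisher-information monotonicity, and the key inequality $I(T_{m+1}) \le I(T_m)$ is obtained from the Artstein--Ball--Barthe--Naor circle of ideas --- the paper via the variational vector-field characterization of Fisher information (Theorem~\ref{thm:variational_char}), you via the score-projection formula for sums plus the ANOVA variance-drop lemma, which are two standard presentations of the same mechanism and yield the same sharp $\tfrac{n-1}{n^2}$ constant. The one soft spot is step~(b): the proposed ``Fisher information jump'' inequality $I(T_{2m}) \le (1-\delta)\,I(T_m) + \delta$ with $\delta = \delta(W)$ depending only on $W$ is not an off-the-shelf lemma and, as stated, would need justification (as $T_m$ approaches Gaussian one has to argue that the per-step gain does not degenerate); the standard way to finish is Barron's uniform-integrability argument that for each fixed $t>0$ the densities of the smoothed normalized sums and their derivatives converge in $L^1$ to the Gaussian ones, giving $I(T_m)\to 1$ directly, or, in the presence of a spectral gap for $W$, the explicit $O(1/m)$ rate provided by Theorem~\ref{thm:ABBN}.
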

The condition $\mathrm{Ent} \left( Z_{1} \, \| \, \phi \right) < \infty$ is necessary for an entropic CLT to hold;  
for instance, if the $Z_{i}$ are discrete, then $h \left( S_{m} \right) = - \infty$ for all $m$.

The entropic CLT originates with Shannon in the 1940s and was first proven by Linnik~\cite{Linnik59} in 1959 (without the monotonicity part of the statement). 
The first proofs that gave explicit convergence rates were given independently and at roughly the same time by Artstein, Ball, Barthe, and Naor~\cite{BBN03,ABBN04,ABBN_PTRF04}, and Johnson and Barron~\cite{johnson2004fisher} in the early 2000s, using two different techniques.

The fact that 
$\mathrm{Ent} \left( S_{1} \right) \leq \mathrm{Ent} \left( S_{2} \right)$ 
follows from the entropy power inequality, which goes back to Shannon~\cite{Shannon48} in 1948. 
This implies that 
$\mathrm{Ent} \left( S_{m} \right) \leq \mathrm{Ent} \left( S_{2m} \right)$ 
for all $m \geq 0$, and so it was naturally conjectured that 
$\mathrm{Ent} \left( S_{m} \right)$ 
increases monotonically. 
However, proving this turned out to be challenging. 
Even the inequality 
$\mathrm{Ent} \left( S_{2} \right) \leq \mathrm{Ent} \left( S_{3} \right)$ 
was unknown for over fifty years, 
until Artstein, Ball, Barthe, and Naor~\cite{ABBN04} proved in general that 
$\mathrm{Ent} \left( S_{m} \right) \leq \mathrm{Ent} \left( S_{m+1} \right)$  
for all $m \geq 1$.

In the following we sketch some of the main ideas that go into the proof of these results, in particular following the techniques of Artstein, Ball, Barthe, and Naor~\cite{BBN03,ABBN04,ABBN_PTRF04}.

\subsection{From relative entropy to Fisher information}

Our goal is to show that some random variable $Z$, which is a convolution of many i.i.d.\ random variables, is close to a Gaussian $G$. 
One way to approach this is to \emph{interpolate} between the two. 
There are several ways of doing this; 
for our purposes interpolation along the Ornstein-Uhlenbeck semigroup is most useful. 
Define 
\[
 P_{t} Z := e^{-t} Z + \sqrt{1 - e^{-2t}} G
\]
for $t \in [0,\infty)$, and let $f_{t}$ denote the density of $P_{t} Z$. We have $P_{0} Z = Z$ and $P_{\infty} Z = G$. 
This semigroup has several desirable properties. 
For instance, if the density of $Z$ is isotropic, then so is $f_{t}$. 
Before we can state the next desirable property that we will use, we need to introduce a few more useful quantities.

For a density function $f : \R^{n} \to \R_{+}$, let 
\[
 \mathcal{I} \left( f \right) := \int \frac{\nabla f (\nabla f)^{T}}{f} = \E \left[ \left( \nabla \log f \right) \left( \nabla \log f \right)^{T} \right] 
\]
be the \emph{Fisher information matrix}. % (for the location family given by $f$). 
The Cram\'{e}r-Rao bound states that 
\[
 \Cov \left( f \right) \succeq \mathcal{I} \left( f \right)^{-1}.
\]
More generally this holds for the covariance of any unbiased estimator of the mean. 
The \emph{Fisher information} is defined as 
\[
 I \left( f \right) := \Tr \left( \mathcal{I} \left( f \right) \right). 
\]
It is sometimes more convenient to work with the Fisher information distance, defined as 
$J(f) := I(f) - I(\phi) = I(f) - n$. 
Similarly to the discussion above, one can show that the standard Gaussian minimizes the Fisher information among isotropic densities, and hence the Fisher information distance is always nonnegative.

Now we are ready to state the De Bruijn identity~\cite{Stam59}, which characterizes the change of entropy along the Ornstein-Uhlenbeck semigroup via the Fisher information distance: 
\[
 \partial_{t} \mathrm{Ent} \left( f_{t} \right) = J \left( f_{t} \right).
\]
This implies that the relative entropy between $f$ and $\phi$---which is our quantity of interest---can be expressed as follows: 
\begin{equation}\label{eq:entropy_Fisher}
 \mathrm{Ent} \left( f \, \| \, \phi \right) 
= \mathrm{Ent} \left( \phi \right) - \mathrm{Ent} \left( f \right)
= \int_{0}^{\infty} J \left( f_{t} \right) dt.
\end{equation}
Thus our goal is to bound the Fisher information distance $J(f_{t})$.

\subsection{Bounding the Fisher information distance}

We first recall a classical result by Blachman~\cite{Blachman65} and Stam~\cite{Stam59} that shows that Fisher information decreases under convolution. 
\begin{theorem}[Blachman~\cite{Blachman65}; Stam~\cite{Stam59}]\label{thm:Blachman-Stam}
Let $Y_{1}, \dots, Y_{d}$ be independent random variables taking values in $\R$, 
and let $a \in \R^{d}$ be such that $\left\| a \right\|_{2} = 1$. 
Then 
\[
 I \left( \sum_{i=1}^{d} a_{i} Y_{i} \right) 
\leq \sum_{i=1}^{d} a_{i}^{2} I \left( Y_{i} \right). 
\]
In the i.i.d.\ case, this bound becomes $\left\| a \right\|_{2}^{2} I \left( Y_{1} \right)$. 
\end{theorem}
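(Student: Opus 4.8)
The plan is to run the classical \emph{score function} argument of Blachman and Stam. For a random variable $Y$ on $\R$ with a sufficiently regular density $f_Y$, write $\rho_Y := (\log f_Y)' = f_Y'/f_Y$ for its score function, so that $I(Y) = \E\left[ \rho_Y(Y)^2 \right]$ and, by integrating $f_Y'$, the score has mean zero: $\E\left[ \rho_Y(Y) \right] = 0$. Since the claimed inequality is trivial whenever some $I(Y_i) = \infty$, I may assume all $I(Y_i) < \infty$. I would also first carry out the argument assuming smooth, rapidly decaying densities, and then remove this assumption at the end by a standard smoothing reduction: replace each $Y_i$ by $Y_i + \eps G_i$ with $G_i$ an independent standard Gaussian and $\eps \downarrow 0$, using that both sides of the inequality behave continuously (the left side lower semicontinuously) under this operation.

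First I would record the two elementary facts needed. \emph{(a) Scaling:} $I(cY) = c^{-2} I(Y)$ for $c \neq 0$, which is immediate from the change of variables $f_{cY}(z) = |c|^{-1} f_Y(z/c)$ in the definition. \emph{(b) Convolution identity for scores:} if $W_1, \dots, W_d$ are independent with densities and $Z = \sum_i W_i$, then for every $i$,
\[
 \rho_Z(z) = \E\left[ \rho_{W_i}(W_i) \,\middle|\, Z = z \right].
\]
This follows by writing $f_Z(z) = \int f_{W_i}(w) f_{Z - W_i}(z - w)\, dw$, differentiating in $z$ with the derivative landing on the $f_{W_i}$ factor (a change of variables plus differentiation under the integral sign), and then dividing by $f_Z(z)$; one recognizes $f_{W_i}(w) f_{Z-W_i}(z-w) / f_Z(z)$ as the conditional density of $W_i$ given $Z = z$. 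Justifying the differentiation under the integral is the main technical point, and it is exactly what the regularity/smoothing reduction is there to handle.

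The heart of the argument is then a single invocation of Jensen's inequality in the form that conditional expectation is an $L^2$-contraction. Apply the scaling fact with $W_i := a_i Y_i$, so $Z = \sum_i a_i Y_i = \sum_i W_i$ and $I(W_i) = a_i^{-2} I(Y_i)$. For any weights $\mu_1, \dots, \mu_d \geq 0$ with $\sum_i \mu_i = 1$, averaging the convolution identity over $i$ gives $\rho_Z(z) = \E\left[ \sum_i \mu_i\, \rho_{W_i}(W_i) \,\middle|\, Z = z \right]$, and hence, by conditional Jensen followed by expanding the square and using independence together with $\E[\rho_{W_i}(W_i)] = 0$ to annihilate every cross term,
\[
 I(Z) = \E\left[ \rho_Z(Z)^2 \right] \leq \E\left[ \Big( \sum_{i=1}^d \mu_i \rho_{W_i}(W_i) \Big)^2 \right] = \sum_{i=1}^d \mu_i^2\, I(W_i) = \sum_{i=1}^d \frac{\mu_i^2}{a_i^2}\, I(Y_i).
\]
Finally I would choose $\mu_i = a_i^2$, which is a valid probability vector precisely because $\|a\|_2^2 = 1$; this gives $I(Z) \leq \sum_i a_i^2 I(Y_i)$, and the i.i.d.\ case follows at once since then the bound equals $\|a\|_2^2\, I(Y_1)$. (Equivalently, for $d = 2$ one optimizes over $\mu$ to recover the familiar reciprocal form $1/I(W_1 + W_2) \geq 1/I(W_1) + 1/I(W_2)$.) The only real obstacle is the analytic bookkeeping underneath fact (b) — existence and absolute continuity of the densities, finiteness of the integrals, and legitimacy of differentiating under the integral — all of which I would dispose of once and for all via the Gaussian-smoothing reduction before running the clean computation above.
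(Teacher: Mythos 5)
Your proof is correct, but it takes a genuinely different route from the paper. The paper derives Blachman--Stam as an immediate corollary of the Artstein--Ball--Barthe--Naor variational characterization of Fisher information (Theorem~\ref{thm:variational_char}): one takes the constant vector field $p \equiv a$, observes that $\mathrm{div}(pw) = \langle \nabla w, a\rangle$ turns the right-hand side of~\eqref{eq:variational_char} into $a^{T}\mathcal{I}(w)a$, and then uses that the Fisher information matrix of the product density $w(x) = \prod_i f_i(x_i)$ is diagonal, so $a^{T}\mathcal{I}(w)a = \sum_i a_i^2 I(f_i)$. You instead give the original Blachman--Stam argument: the convolution identity $\rho_Z(z) = \E[\rho_{W_i}(W_i)\,|\,Z=z]$ for each $i$, an averaging over $i$ with weights $\mu_i$, the $L^2$-contraction property of conditional expectation (conditional Jensen), and the vanishing of cross terms via independence plus $\E[\rho_{W_i}(W_i)]=0$, followed by the choice $\mu_i = a_i^2$ and the scaling $I(a_iY_i) = a_i^{-2}I(Y_i)$. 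Both are sound. The paper's route has the virtue of unifying the exposition: Theorem~\ref{thm:variational_char} is needed anyway for the quantitative improvement in Theorem~\ref{thm:ABBN}, so deriving Blachman--Stam from it with a trivial choice of vector field is essentially free, and it makes transparent that the quantitative gain comes precisely from optimizing over non-constant $p$. Your route is more elementary and self-contained --- it does not require stating or black-boxing the variational lemma --- and it directly exhibits the score-function-as-conditional-expectation mechanism, which is the classical heart of the result; it also cleanly flags where the real analytic work lies (justifying differentiation under the integral) and dispatches it by the standard Gaussian-smoothing reduction, a point the paper elides under the phrase ``sufficiently smooth.''
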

Artstein, Ball, Barthe, and Naor~\cite{BBN03,ABBN04} gave the following variational characterization of the Fisher information, which gives a particularly simple proof of Theorem~\ref{thm:Blachman-Stam}. 
\begin{theorem}[Variational characterization of Fisher information~\cite{BBN03,ABBN04}]\label{thm:variational_char}
Let $w : \mathbb{R}^{d} \to \left( 0, \infty \right)$ be a sufficiently smooth\footnote{It is enough that $w$ is continuously twice differentiable and satisfies $\int \left\| \nabla w \right\|^{2} / w < \infty$ and $\int \left\| \mathrm{Hess} \left( w \right) \right\| < \infty$.} density on $\mathbb{R}^{d}$, let $a \in \mathbb{R}^{d}$ be a unit vector, and let $h$ be the marginal of $w$ in direction $a$. Then we have 
\begin{equation}\label{eq:variational_char}
I \left( h \right) \leq \int_{\mathbb{R}^{d}} \left( \frac{\mathrm{div} \left( pw \right)}{w} \right)^{2} w
\end{equation}
for any continuously differentiable vector field $p : \mathbb{R}^{d} \to \mathbb{R}^{d}$ with the property that for every $x$, $\left\langle p \left( x \right), a \right\rangle = 1$. 
Moreover, if $w$ satisfies $\int \left\| x \right\|^{2} w \left( x \right) < \infty$, then there is equality for some suitable vector field $p$. 
\end{theorem}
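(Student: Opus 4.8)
The plan is to prove the inequality by a slicing argument together with Cauchy--Schwarz, and to settle the equality case by reverse-engineering the optimal vector field. By rotational invariance of all the quantities involved (the marginal of $w$ in direction $a$, the constraint $\langle p,a\rangle = 1$, and the integral on the right of~\eqref{eq:variational_char} are all preserved under an orthogonal change of coordinates) we may assume $a = e_{1}$. Write $x = (s,y)$ with $s \in \R$, $y \in \R^{d-1}$, let $h(s) = \int_{\R^{d-1}} w(s,y)\, dy$ be the marginal, and recall that $I(h) = \int_{\R} h'(s)^{2}/h(s)\, ds$.

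The key step is the slice identity: for any admissible $p$ (so $p_{1} \equiv 1$),
\[
 \int_{\R^{d-1}} \frac{\mathrm{div}\left( pw \right)}{w}\left( s,y \right) w\left( s,y \right) dy = \int_{\R^{d-1}} \mathrm{div}\left( pw \right)\left( s,y \right) dy = h'\left( s \right).
\]
Indeed, in $\mathrm{div}(pw) = \sum_{i=1}^{d} \partial_{i}\left( p_{i} w \right)$ the $i=1$ term equals $\partial_{1} w$ since $p_{1} \equiv 1$, and its $y$-integral is $h'(s)$ by differentiating under the integral sign; each term with $i \ge 2$ integrates to zero over $\R^{d-1}$ by the fundamental theorem of calculus, since $p_{i} w$ vanishes at infinity in the $x_{i}$ direction --- this is precisely where the smoothness and tail hypotheses on $w$ (and the regularity of $p$) enter. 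Given this identity, Cauchy--Schwarz on each slice with respect to the probability measure $w(s,y)\,dy/h(s)$ gives $h'(s)^{2}/h(s) \le \int_{\R^{d-1}} \left( \mathrm{div}(pw)/w \right)^{2}(s,y)\, w(s,y)\, dy$, and integrating over $s \in \R$ yields $I(h) \le \int_{\R^{d}} \left( \mathrm{div}(pw)/w \right)^{2} w$, which is~\eqref{eq:variational_char}.

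For the equality claim I would start from the Cauchy--Schwarz equality condition: we need $\mathrm{div}(pw)(s,y)/w(s,y)$ to be independent of $y$ for a.e.\ $s$, and the slice identity then forces it to equal $h'(s)/h(s)$. So it suffices to build a continuously differentiable $p$ with $p_{1} \equiv 1$ and $\mathrm{div}(pw) = \left( h'(x_{1})/h(x_{1}) \right) w$. Writing $p = (1, q_{2}, \dots, q_{d})$, this reduces to solving $\mathrm{div}_{y}\left( q w \right) = \left( h'(s)/h(s) \right) w - \partial_{1} w$ in the $y$-variables on each hyperplane $\{x_{1} = s\}$; the right-hand side integrates to zero over the slice, by the very definition of $h'$, which is exactly the compatibility condition making such an equation solvable (e.g.\ take $q = \nabla_{y}\psi$ with $\psi$ solving a weighted elliptic equation with weight $w(s,\cdot)$, or construct $q$ by iterated integration). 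The hypothesis $\int \left\| x \right\|^{2} w(x)\, dx < \infty$ is what guarantees that the resulting $p$ is integrable enough for $\int_{\R^{d}}\left( \mathrm{div}(pw)/w \right)^{2} w$ to be finite and for the boundary terms in the integration by parts to vanish.

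The main obstacle is not the inequality --- once the slice identity is set up it is a one-line Cauchy--Schwarz --- but the analytic bookkeeping around it: rigorously justifying that the boundary terms in the integration by parts vanish under the stated smoothness and tail conditions, and, in the equality case, producing the optimal vector field $p$ (via the solvability argument above) while controlling its integrability using only the second-moment assumption. These are the points at which the precise regularity hypotheses recorded in the footnote are genuinely needed.
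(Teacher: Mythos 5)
The paper does not prove Theorem~\ref{thm:variational_char}; it is quoted as an external result from~\cite{BBN03,ABBN04} and only \emph{used} (the text that follows shows how the Blachman--Stam inequality drops out by taking $p \equiv a$). So there is no in-paper proof to compare against. Your argument is, however, correct and is essentially the original Ball--Barthe--Naor proof: reduce to $a = e_{1}$, establish the slice identity $\int_{\R^{d-1}} \mathrm{div}(pw)(s,y)\,dy = h'(s)$ (the $i=1$ term gives $h'(s)$ by differentiation under the integral, the $i\ge 2$ terms vanish by integrating the divergence over the hyperplane with decay at infinity), and then apply Cauchy--Schwarz slice by slice against the weight $w(s,\cdot)$. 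Your treatment of the equality case is also the right one: Cauchy--Schwarz is saturated iff $\mathrm{div}(pw)/w$ is constant in $y$ on a.e.\ slice, which together with the slice identity forces $\mathrm{div}(pw) = (h'/h)\,w$, and the remaining content is the solvability of $\mathrm{div}_y(qw) = (h'/h)w - \partial_1 w$ on each slice (compatibility is exactly $\int_{\R^{d-1}}[\cdot]\,dy = 0$, which you correctly verify) together with enough integrability for the construction to make $\int (\mathrm{div}(pw)/w)^2 w$ finite; that is precisely where the second-moment hypothesis is spent. The one thing worth being slightly more careful about if you wrote this out in full is the integrability and boundary-term justification for the $i\ge 2$ terms vanishing slice by slice --- the footnote's conditions $\int \|\nabla w\|^2/w < \infty$ and $\int \|\mathrm{Hess}(w)\| < \infty$ are what make this kosher, and a complete write-up would need to invoke them explicitly rather than just asserting decay.
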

The Blachman-Stam theorem follows from this characterization by taking the constant vector field $p \equiv a$. 
Then we have 
$\mathrm{div} \left( pw \right) = \left\langle \nabla w, a \right\rangle$, 
and so the right hand side of~\eqref{eq:variational_char} becomes 
$a^{T} \mathcal{I} \left( w \right) a$, 
where recall that $\mathcal{I}$ is the Fisher information matrix. 
In the setting of Theorem~\ref{thm:Blachman-Stam} the density $w$ of $\left( Y_{1}, \dots, Y_{d} \right)$ is a product density: 
$w \left( x_{1}, \dots, x_{d} \right) = f_{1} \left( x_{1} \right) \times \dots \times f_{d} \left( x_{d} \right)$, 
where $f_{i}$ is the density of $Y_{i}$. 
Consequently the Fisher information matrix is a diagonal matrix, 
$\mathcal{I} \left( w \right) = \mathrm{diag} \left( I \left( f_{1} \right), \dots, I \left( f_{d} \right) \right)$, 
and thus 
$a^{T} \mathcal{I} \left( w \right) a = \sum_{i=1}^{d} a_{i}^{2} I \left( f_{i} \right)$, 
concluding the proof of Theorem~\ref{thm:Blachman-Stam} using Theorem~\ref{thm:variational_char}.

Given the characterization of Theorem~\ref{thm:variational_char}, one need not take the vector field to be constant; 
one can obtain more by optimizing over the vector field. 
Doing this leads to the following theorem, which gives a rate of decrease of the Fisher information distance under convolutions. 
\begin{theorem}[Artstein, Ball, Barthe, and Naor~\cite{BBN03,ABBN04,ABBN_PTRF04}]\label{thm:ABBN}
Let $Y_{1}, \dots, Y_{d}$ be i.i.d.\ random variables with a density having a positive spectral gap $c$.\footnote{We say that a random variable has spectral gap $c$ if for every sufficiently smooth $g$, we have $\mathrm{Var} \left( g \right) \leq \tfrac{1}{c} \E g'^{2}$. In particular, log-concave random variables have a positive spectral gap, see~\cite{Bobkov99}.} 
Then for any $a \in \mathbb{R}^{d}$ with $ \left\| a \right\|_{2} = 1$ we have that 
\[
 J \left( \sum_{i=1}^{d} a_{i} Y_{i} \right) 
\leq 
\frac{2 \left\| a \right\|_{4}^{4}}{c + (2-c) \left\| a \right\|_{4}^{4}} J \left( Y_{1} \right).
\]
\end{theorem}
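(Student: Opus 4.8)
The strategy is to apply the variational characterization of Theorem~\ref{thm:variational_char} to the product density $w(x_1,\dots,x_d)=\prod_{i=1}^d f(x_i)$, where $f$ is the common density of the $Y_i$; its marginal in the direction $a$ is precisely the density $h$ of $S:=\sum_{i=1}^d a_iY_i$, so that $J\big(\sum a_iY_i\big)=I(h)-1$. We may assume $I(f)<\infty$, as otherwise there is nothing to prove; the spectral-gap hypothesis will be used both to supply the Poincar\'e inequality $\Var(g(Y_1))\le\tfrac1c\,\E[g'(Y_1)^2]$ and, together with routine regularity (obtainable by a smoothing argument), to justify the integrations by parts and the equality clause of Theorem~\ref{thm:variational_char}. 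It is convenient to record at the outset that the Fisher information distance is the second moment of the \emph{centered score}: writing $\rho:=(\log f)'$ and using $\E[\rho(Y_1)]=0$, $\E[\rho(Y_1)Y_1]=-1$, $\Var(Y_1)=1$, one has $J(Y_1)=I(f)-1=\E\big[(\rho(Y_1)+Y_1)^2\big]$, and likewise $J(S)=\E\big[(\rho_h(S)+S)^2\big]$. Put $r(y):=\rho(y)+y$, so $\E[r(Y_1)]=\E[r(Y_1)Y_1]=0$, $\E[r(Y_1)^2]=J(Y_1)$, and $\E[r'(Y_1)]=-J(Y_1)$.

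Since $w$ is a product density, $\mathrm{div}(pw)/w=\sum_i\partial_ip_i+\sum_ip_i(x)\rho(x_i)$, so Theorem~\ref{thm:variational_char} gives
\[
I(h)\le\E\Big[\Big(\sum_i\partial_ip_i(Y)+\sum_ip_i(Y)\rho(Y_i)\Big)^2\Big]
\]
for every vector field $p$ with $\langle p(x),a\rangle\equiv1$. The constant choice $p\equiv a$ kills the divergence term and, by independence and $\E[\rho(Y_i)]=0$, recovers exactly the Blachman--Stam bound $I(h)\le\sum_ia_i^2I(f)=I(f)$ of Theorem~\ref{thm:Blachman-Stam}; the whole point of Theorem~\ref{thm:variational_char} is that one may do better by perturbing. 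I would take $p=a+q$ with $\langle q(x),a\rangle\equiv0$ and build $q$ from the coordinatewise centered scores $r(x_j)$ with a single free scalar $\alpha$ to be optimized at the end --- up to normalization something like $q_i(x)=\alpha\big(a_i\sum_j a_jr(x_j)-r(x_i)\big)$, which indeed satisfies $\langle q(x),a\rangle\equiv0$ --- so that the leading term $\sum_ia_i\rho(Y_i)$ is partially cancelled against a linear combination of the $r(Y_j)$. Expanding the square: the diagonal ($i=j$) contributions factor through independence into explicit multiples of $I(f)$ and $J(Y_1)$, while the off-diagonal contributions, after integrating by parts in $Y_i$ (legitimate because $q_i$ enters the variable $Y_i$ only through the explicit term $r(Y_i)$), carry the combinatorial weight $\sum_{i\neq j}a_i^2a_j^2=1-\|a\|_4^4$; this is where the factor $\|a\|_4^4$ in the bound comes from.

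The divergence piece $\sum_i\partial_iq_i$ and the cross terms produced by the expansion yield, besides quantities that are again directly $J(Y_1)$ (such as $\Var\big(\sum_j a_jr(Y_j)\big)=J(Y_1)$), auxiliary quantities of the form $\E[\psi(Y_1)^2]$ or $\Var(\psi(Y_1))$ for functions $\psi$ built out of $r$; these have to be fed back into Fisher-information-type quantities, which is precisely the role of the Poincar\'e inequality furnished by the spectral gap $c$ --- and this is what makes the coefficients $c$ and $2-c$ appear. Collecting everything, the right-hand side becomes a quadratic in $\alpha$ whose coefficients are explicit linear combinations of $J(Y_1)$ weighted by $c$, $2-c$ and $\|a\|_4^4$; minimizing it over $\alpha$ gives
\[
J(S)=I(h)-1\le\frac{2\|a\|_4^4}{c+(2-c)\|a\|_4^4}\,J(Y_1),
\]
as claimed. (Sanity checks: $c\to0$ gives only the trivial $J(S)\le J(Y_1)$; for $a$ a standard basis vector, $S=Y_1$ and $\|a\|_4^4=1$ and the bound is an equality; for the i.i.d.-CLT scaling $a_i\equiv d^{-1/2}$ one has $\|a\|_4^4=1/d$ and the bound is $O(1/d)$.)

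\textbf{Main obstacle.} Everything after the choice of $p$ --- the integration-by-parts bookkeeping, the Poincar\'e estimates, and the one-dimensional optimization over $\alpha$ --- is routine. The genuinely delicate point is to guess the vector field (equivalently, the weights in the score-projection representation of $\rho_h$) so that it is at once flexible enough to beat Blachman--Stam and rigid enough that, after integration by parts, the resulting expectation collapses into a clean quadratic with \emph{exactly} the coefficients $c$, $2-c$, $\|a\|_4^4$ rather than messier constants. An alternative, more structural route to the same estimate is to decompose $\sum_i\lambda_ir(Y_i)$ into its Hoeffding (ANOVA) components and combine the fact that conditioning on $S$ contracts the higher-order components with a tensorized Poincar\'e inequality; this too rests on the spectral-gap hypothesis.
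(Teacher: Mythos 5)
The paper does not actually prove Theorem~\ref{thm:ABBN}: it states it as a result of Artstein, Ball, Barthe, and Naor, and offers only the one-sentence remark that it follows from ``optimizing over the vector field'' in Theorem~\ref{thm:variational_char}. Your high-level plan agrees both with that hint and with the strategy in the cited references: apply the variational characterization to the product density, perturb the constant field $a$ by a tangential field built from the scores, and minimize over a free scalar. Your bookkeeping identities for the centered score $r = \rho + \mathrm{id}$ (namely $\E r = \E[rY_1]=0$, $\E r^2 = J(Y_1)$, $\E r' = -J(Y_1)$) are correct and are indeed the right currency for the computation.

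What you have, though, is an outline rather than a proof, and the missing piece is exactly the part you label ``routine.'' You hedge on the vector field (``up to normalization something like $q_i(x)=\alpha\bigl(a_i\sum_j a_j r(x_j)-r(x_i)\bigr)$'') and never verify that this particular choice yields the quadratic in $\alpha$ whose minimum is $\frac{2\|a\|_4^4}{c+(2-c)\|a\|_4^4}J(Y_1)$, as opposed to some nearby but different rational expression. This is not a cosmetic gap: with your $q$, the term $\sum_i q_i\rho(Y_i)$ is \emph{bilinear} in the coordinate scores, so $(\mathrm{div}(pw)/w)^2$ contains quartic products of scores across different coordinates; the diagonal contributions are then things like $\E\bigl[r(Y_1)^2\rho(Y_1)^2\bigr]$, which are \emph{not} multiples of $I(f)$ or $J(Y_1)$ and have to be tamed by a careful combination of integration by parts and the Poincar\'e inequality before anything collapses to a clean quadratic. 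Whether your guessed $q$ is the one that makes all of this cancel to give precisely the coefficients $c$, $2-c$, $\|a\|_4^4$ cannot be decided without carrying out the computation, and your boundary sanity checks ($\|a\|_4^4=1$, $c\to0$, $a_i\equiv d^{-1/2}$) are too coarse to distinguish the stated bound from near-miss alternatives. In short: right approach, right preparatory identities, but the decisive calculation --- which is the content of the theorem --- is asserted, not done.
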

When $a = \frac{1}{\sqrt{d}} \mathbf{1}$, then $\frac{2 \left\| a \right\|_{4}^{4}}{c + (2-c) \left\| a \right\|_{4}^{4}} = O \left( 1 / d \right)$, 
and thus using~\eqref{eq:entropy_Fisher} we obtain a rate of convergence of $O \left( 1 / d \right)$ in the entropic CLT. 

A result similar to Theorem~\ref{thm:ABBN} was proven independently and roughly at the same time by Johnson and Barron~\cite{johnson2004fisher} using a different approach involving score functions.

\subsection{A high-dimensional entropic CLT}

The techniques of Artstein, Ball, Barthe, and Naor~\cite{BBN03,ABBN04,ABBN_PTRF04} generalize to higher dimensions, as was recently shown by Bubeck and Ganguly~\cite{BubeckGanguly15}. 
A result similar to Theorem~\ref{thm:ABBN} can be proven, from which a high-dimensional entropic CLT follows, together with a rate of convergence, by using~\eqref{eq:entropy_Fisher} again. 
\begin{theorem}[Bubeck and Ganguly~\cite{BubeckGanguly15}]\label{thm:high_dim_CLT}
Let $Y \in \mathbb{R}^{d}$ be a random vector with i.i.d.\ entries from a distribution $\nu$ with zero mean, unit variance, and spectral gap $c \in (0,1]$. 
Let $A \in \mathbb{R}^{n \times d}$ be a matrix such that $AA^{T} = I_{n}$, the $n \times n$ identity matrix. 
Let $\varepsilon = \max_{i \in [d]} \left( A^T A \right)_{i,i}$ 
and $\zeta = \max_{i,j \in [d], i \neq j} \left| \left( A^T A \right)_{i,j} \right|$. 
Then we have that
\[
\mathrm{Ent} \left( A Y \, \| \, \gamma_{n} \right) 
\leq 
n \min \left\{ 2 \left( \varepsilon + \zeta^{2} d \right) / c, 1 \right\} 
\mathrm{Ent} \left( \nu \, \| \, \gamma_{1} \right), 
\]
where $\gamma_{n}$ denotes the standard Gaussian measure in $\mathrm{R}^{n}$.
\end{theorem}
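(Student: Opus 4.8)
The plan is to adapt the Artstein--Ball--Barthe--Naor route via the Fisher information, lifted from scalar marginals to the linear image $AY \in \R^n$, and to interpolate along the Ornstein--Uhlenbeck semigroup exactly as in the one-dimensional entropic CLT. First I would record that $AY$ is isotropic: $\Cov(AY) = A\,\Cov(Y)\,A^T = AA^T = I_n$, so \eqref{eq:entropy_Fisher} applies and $\mathrm{Ent}(AY \,\|\, \gamma_n) = \int_0^\infty J(g_t)\,dt$, where $g_t$ is the density of $P_t(AY)$. The crucial point is that the semigroup commutes with the linear map: if $\tilde G$ is an $\R^d$ standard Gaussian independent of $Y$, then $P_t(AY) \stackrel{d}{=} A Y_t$ with $Y_t := e^{-t} Y + \sqrt{1-e^{-2t}}\,\tilde G$, since $A\tilde G \sim \gamma_n$. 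Thus $Y_t$ still has i.i.d.\ coordinates, now distributed as the OU-evolved law $\nu_t$, which has mean zero, unit variance, and spectral gap at least $c$ (the Poincar\'e constant is subadditive under convolution, so it stays below $e^{-2t}/c + (1-e^{-2t}) \le 1/c$). The whole problem is thereby reduced to bounding $J(AY_t)$ in terms of $J(\nu_t)$, uniformly in $t$.

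Second, I would establish a matrix-valued version of the variational characterization in Theorem~\ref{thm:variational_char}: for a sufficiently smooth density $w$ on $\R^d$ and any smooth matrix field $P : \R^d \to \R^{d \times n}$ with $A P(x) = I_n$ for all $x$,
\[
 \mathcal{I}(AY) \preceq \E_{w}\!\left[ \left( \frac{\mathrm{div}(Pw)}{w} \right)\!\left( \frac{\mathrm{div}(Pw)}{w} \right)^{\!T}\right],
\]
where $\mathrm{div}$ acts column-by-column (so that $\mathrm{div}(Pw) \in \R^n$), with equality for a suitable $P$ as in the scalar case; the proof is the same integration-by-parts and Cauchy--Schwarz argument applied to each quadratic form $v^T \mathcal{I}(AY) v$. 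Taking the constant field $P \equiv A^T$ (legitimate since $AA^T = I_n$) and using that $w_t$ is a product density, so $\mathcal{I}(w_t) = I(\nu_t) I_d$, already gives $\mathcal{I}(AY_t) \preceq I(\nu_t) AA^T = I(\nu_t) I_n$, hence $J(AY_t) \le n\,J(\nu_t)$; integrating in $t$ yields the crude bound $\mathrm{Ent}(AY \,\|\, \gamma_n) \le n\,\mathrm{Ent}(\nu \,\|\, \gamma_1)$ --- this is the ``$\min\{\cdot,1\}$'' branch, and it is already tight for $A = (I_n \mid 0)$.

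Third, to obtain the refined factor $2(\varepsilon + \zeta^2 d)/c$ I would optimize the matrix field rather than take it constant, in the spirit of Theorem~\ref{thm:ABBN}: write the $j$-th column of $P$ as $A^T e_j + u_j(x)$ with $u_j(x) \in \ker A$, and choose $u_j$ to solve the Euler--Lagrange equation minimizing the right-hand side of the variational bound. The spectral gap of $\nu_t$ (log-concave densities have one, and it survives the OU flow) feeds a Poincar\'e inequality controlling the size of these corrections by a factor $1/c$. Expanding the resulting quadratic form, the self-interaction of the $d$ input coordinates produces the diagonal of $A^TA$, bounded by $\varepsilon$, while their mutual interference produces the off-diagonal entries of $A^TA$ (each of size at most $\zeta$, at most $d$ of them in any row), contributing $\zeta^2 d$; tracking constants yields $J(AY_t) \le 2n\,(\varepsilon + \zeta^2 d)\,J(\nu_t)/c$ for every $t$.

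Finally, combining the two pointwise bounds on $J(AY_t)$ and integrating over $t \in [0,\infty)$, while invoking \eqref{eq:entropy_Fisher} in dimension one to identify $\int_0^\infty J(\nu_t)\,dt = \mathrm{Ent}(\nu \,\|\, \gamma_1)$, delivers the claimed inequality. The main obstacle is the third step: making the matrix variational principle genuinely useful requires the right ansatz for the perturbed field $P$ and, above all, a careful accounting of the cross terms between the $n$ output coordinates --- precisely the place where $\varepsilon$ (from $\diag(A^TA)$) and $\zeta^2 d$ (from its off-diagonal) enter with the correct constants --- while simultaneously absorbing the correction terms through the spectral gap. By comparison, the reduction in Step~1, the matrix variational identity in Step~2, and the integration in Step~4 are routine upgrades of the one-dimensional theory.
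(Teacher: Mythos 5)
Your proposal follows essentially the same route as Bubeck and Ganguly: reduce to Fisher information via the De Bruijn identity, exploit that the Ornstein--Uhlenbeck semigroup commutes with $A$ (since $AA^T = I_n$), generalize the Artstein--Ball--Barthe--Naor variational characterization of Fisher information to matrix-valued fields $P$ with $AP = I_n$, take $P \equiv A^T$ for the trivial branch and optimize over $P$ (using the Poincar\'e inequality from the spectral gap) for the $2(\varepsilon + \zeta^2 d)/c$ branch, and integrate in $t$. You correctly flag that the genuine work lies in the optimization step where $\varepsilon$ and $\zeta^2 d$ emerge from the diagonal and off-diagonal of $A^TA$, and the rest of your outline (the commutation $P_t(AY) \stackrel{d}{=} AY_t$, the spectral-gap stability of $\nu_t$ under the OU flow, and the matrix variational bound $\mathcal{I}(AY) \preceq A\mathcal{I}(w)A^T$ for the constant field) matches the paper's argument.
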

To interpret this result, consider the case where the matrix $A$ is built by picking rows one after the other uniformly at random on the Euclidean sphere in $\mathbb{R}^{d}$, conditionally on being orthogonal to previous rows (to satisfy the isotropicity condition $AA^{T} = I_{n}$). 
We then expect to have $\varepsilon \simeq n/d$ and $\zeta \simeq \sqrt{n} / d$ (we leave the details as an exercise for the reader), 
and so Theorem~\ref{thm:high_dim_CLT} tells us that 
$\mathrm{Ent} \left( A Y \, \| \, \gamma_{n} \right) \lesssim n^{2}/d$.

\subsection{Back to Wishart and GOE}

We now turn our attention back to bounding the relative entropy 
$\mathrm{Ent} \left( \mathcal{W}_{n,d} \, \| \, \mathcal{G}_{n} \right)$ 
between the $n \times n$ Wishart matrix with $d$ degrees of freedom (with the diagonal removed), $\mathcal{W}_{n,d}$, 
and the $n \times n$ GOE matrix (with the diagonal removed), $\mathcal{G}_{n}$; 
recall~\eqref{eq:pinsker}. 
Since the Wishart matrix contains the (scaled) inner products of $n$ vectors in $\mathbb{R}^{d}$, 
it is natural to relate $\mathcal{W}_{n+1,d}$ and $\mathcal{W}_{n,d}$, since the former comes from the latter by adding an additional $d$-dimensional vector to the $n$ vectors already present. 
Specifically, we have the following:
\[
\mathcal{W}_{n+1,d} = 
\begin{pmatrix}
\mathcal{W}_{n,d} & \frac{1}{\sqrt{d}} \mathbb{X} X \\ 
\frac{1}{\sqrt{d}} \left( \mathbb{X} X \right)^{T} & 0 
\end{pmatrix},
\]
where $X$ is a $d$-dimensional random vector with i.i.d.\ entries from $\mu$, which are also independent from $\mathbb{X}$. 
Similarly we can write the matrix $\mathcal{G}_{n+1}$ using $\mathcal{G}_{n}$: 
\[
\mathcal{G}_{n+1} = 
\begin{pmatrix}
\mathcal{G}_{n} & \gamma_{n} \\ 
\gamma_{n}^{T} & 0 
\end{pmatrix}.
\]

This naturally suggests to use the chain rule for relative entropy and bound 
$\mathrm{Ent} \left( \mathcal{W}_{n,d} \, \| \, \mathcal{G}_{n} \right)$ 
by induction on $n$. 
By~\eqref{eq:chain_rel_entropy} we get that 
\[
\mathrm{Ent} \left( \mathcal{W}_{n+1,d} \, \| \, \mathcal{G}_{n+1} \right) 
= 
\mathrm{Ent} \left( \mathcal{W}_{n,d} \, \| \, \mathcal{G}_{n} \right) 
+ \mathbb{E}_{W_{n,d}} \left[ \mathrm{Ent} \left( \tfrac{1}{\sqrt{d}} \mathbb{X} X \, | \, \mathcal{W}_{n,d} \, \| \, \gamma_{n} \right) \right].
\]
By convexity of the relative entropy we also have that 
\[
\mathbb{E}_{W_{n,d}} \left[ \mathrm{Ent} \left( \tfrac{1}{\sqrt{d}} \mathbb{X} X \, | \, \mathcal{W}_{n,d} \, \| \, \gamma_{n} \right) \right] 
\leq 
\mathbb{E}_{\mathbb{X}} \left[ \mathrm{Ent} \left( \tfrac{1}{\sqrt{d}} \mathbb{X} X \, | \, \mathbb{X} \, \| \, \gamma_{n} \right) \right]. 
\]
Thus our goal is to understand and bound 
$\mathrm{Ent} \left( A X \, \| \, \gamma_{n} \right)$ 
for $A \in \mathbb{R}^{n \times d}$, 
and then apply the bound to $A = \tfrac{1}{\sqrt{d}} \mathbb{X}$ (followed by taking expectation over $\mathbb{X}$). 
This is precisely what was done in Theorem~\ref{thm:high_dim_CLT}, the high-dimensional entropic CLT, for $A$ satisfying $AA^T = I_{n}$. 
Since $A = \tfrac{1}{\sqrt{d}} \mathbb{X}$ does not necessarily satisfy $AA^T = I_{n}$, we have to correct for the lack of isotropicity. 
This is the content of the following lemma, the proof of which we leave as an exercise for the reader. 
\begin{lemma}[\cite{BubeckGanguly15}] 
Let $A \in \mathbb{R}^{n \times d}$ and $Q \in \mathbb{R}^{n \times n}$ be such that 
$QA \left( QA \right)^{T} = I_{n}$. 
Then for any isotropic random variable $X$ taking values in $\mathbb{R}^{d}$ we have that 
\begin{equation}\label{eq:isotropicity}
 \mathrm{Ent} \left( A X \, \| \, \gamma_{n} \right) = \mathrm{Ent} \left( QA X \, \| \, \gamma_{n} \right) + \frac{1}{2} \mathrm{Tr} \left( A A^{T} \right) - \frac{n}{2} + \frac{1}{2} \log \left| \det \left( Q \right) \right|.
\end{equation}
\end{lemma}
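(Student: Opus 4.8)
The plan is to reduce the identity to elementary bookkeeping with Gaussian relative entropy. Writing $h(\cdot)$ for differential entropy and $\phi_n$ for the density of $\gamma_n$, the starting point is that, since $\log \phi_n(y) = -\tfrac{n}{2}\log(2\pi) - \tfrac{1}{2}\left\| y \right\|^{2}$ is quadratic in $y$, for \emph{any} random vector $Y$ in $\R^{n}$ admitting a density one has
\[
 \mathrm{Ent} \left( Y \, \| \, \gamma_n \right) = -h(Y) + \tfrac{n}{2}\log\left( 2\pi \right) + \tfrac{1}{2}\, \E\left[ \left\| Y \right\|^{2} \right].
\]
First I would apply this identity to $Y = AX$ and to $Y = QAX$. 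Since $X$ is isotropic, $\E\left[ \left\| AX \right\|^{2} \right] = \Tr\left( AA^{T} \right)$, while the hypothesis $QA(QA)^{T} = I_n$ gives $\E\left[ \left\| QAX \right\|^{2} \right] = \Tr\left( I_n \right) = n$. Subtracting the two instances of the display above therefore already produces the $\tfrac{1}{2}\Tr(AA^{T})$ and $-\tfrac{n}{2}$ terms.

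The second ingredient is the behaviour of differential entropy under an invertible linear map. The constraint $QA(QA)^{T} = I_n$ forces the $n\times n$ matrix $Q$ to have full rank $n$, hence to be invertible, and then $QAX$ is the image of $AX$ under $Q$, so the change-of-variables formula gives $h\left( QAX \right) = h\left( AX \right) + \log\left| \det Q \right|$. Substituting this relation together with the two second-moment computations into the two instances of the displayed identity and rearranging, the $\tfrac{n}{2}\log(2\pi)$ contributions cancel and one is left with the asserted relation between $\mathrm{Ent}(AX\,\|\,\gamma_n)$, $\mathrm{Ent}(QAX\,\|\,\gamma_n)$, the trace term, and the Jacobian term $\log\left|\det Q\right|$. (It may be convenient to record along the way that $\left|\det Q\right|^{2}\det\left(AA^{T}\right) = 1$, which follows by taking determinants in $QA(QA)^{T} = I_n$ and lets one trade the $\det Q$ term for a $\det(AA^{T})$ term if desired.)

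I do not expect a serious obstacle here; the one point requiring care is regularity. For the differential entropies and the change-of-variables formula to be legitimate one needs $AX$ (equivalently $QAX$) to possess a density on $\R^{n}$ --- this holds because $A$ has full row rank $n$ (again forced by $QA(QA)^{T} = I_n$) and $X$ is absolutely continuous --- and one needs all the entropies involved to be finite, which is exactly where the log-concavity (or at least smoothness) hypotheses on the entry distribution feed in when the lemma is invoked in the main induction on $n$. Granting these, the proof is a short computation.
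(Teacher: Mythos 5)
Your computation is correct and is exactly the right route: write $\mathrm{Ent}(Y\,\|\,\gamma_n) = -h(Y) + \tfrac{n}{2}\log(2\pi) + \tfrac{1}{2}\E\|Y\|^2$, evaluate the second moments using isotropy of $X$ and the constraint $QA(QA)^T = I_n$, and use the Jacobian formula for differential entropy. The paper leaves the proof as an exercise, so there is no written proof to diff against, but there is no doubt that this elementary bookkeeping is the intended argument. Your observations on regularity (full row rank of $A$ forces $AX$ to have a density; finiteness of the entropies is supplied by the hypotheses on $\mu$ where the lemma is invoked) are exactly the right caveats.

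One thing you should state explicitly rather than leave implicit: your derivation yields
\[
 \mathrm{Ent}\left( A X \,\|\, \gamma_{n} \right)
 = \mathrm{Ent}\left( QA X \,\|\, \gamma_{n} \right) + \tfrac{1}{2}\mathrm{Tr}\left( A A^{T} \right) - \tfrac{n}{2} + \log\left| \det Q \right|,
\]
with coefficient $1$ on the Jacobian term, whereas the lemma as printed in these notes has $\tfrac{1}{2}\log|\det Q|$. The printed statement is a transcription slip, and your version is the correct one. You can see this in two independent ways. First, a one-dimensional sanity check: take $n=d=1$, $A=2$, $X\sim\gamma_1$, $Q=\tfrac12$; the left side is $\mathrm{Ent}(\mathcal{N}(0,4)\,\|\,\mathcal{N}(0,1)) = \tfrac{1}{2}(3-\log 4)$, which matches $0 + 2 - \tfrac12 + \log\tfrac12 = \tfrac12(3-\log 4)$ but not the version with $\tfrac12\log\tfrac12$. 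Second, the notes themselves say the middle and last terms ``should be understood as the relative entropy between a centered Gaussian with covariance $\tfrac{1}{d}\mathbb{X}\mathbb{X}^T$ and a standard Gaussian,'' and that relative entropy is $\tfrac12\Tr(AA^T) - \tfrac{n}{2} - \tfrac12\log\det(AA^T)$; since $\det(Q)^2\det(AA^T)=1$, the last piece is $\log|\det Q|$, not $\tfrac12\log|\det Q|$. So your proof is not merely correct --- it catches an error in the statement, and a good write-up should say so.
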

We then apply this lemma with 
$A = \tfrac{1}{\sqrt{d}} \mathbb{X}$ 
and 
$Q = \left( \tfrac{1}{d} \mathbb{X} \mathbb{X}^{T} \right)^{-1/2}$. 
Observe that 
$\mathbb{E} \mathrm{Tr} \left( A A^{T} \right) = \tfrac{1}{d} \mathbb{E} \mathrm{Tr} \left( \mathbb{X} \mathbb{X}^{T} \right) = \tfrac{1}{d} \times n \times d = n$, 
and hence in expectation the middle two terms of the right hand side of~\eqref{eq:isotropicity} cancel each other out.

The last term in~\eqref{eq:isotropicity}, 
$- \tfrac{1}{4} \log \det \left( \tfrac{1}{d} \mathbb{X} \mathbb{X}^{T} \right)$, 
should be understood as the relative entropy between a centered Gaussian with covariance given by 
$\tfrac{1}{d} \mathbb{X} \mathbb{X}^{T}$ 
and a standard Gaussian in $\mathbb{R}^{n}$. 
Controlling the expectation of this term requires studying the probability that $\mathbb{X} \mathbb{X}^{T}$ is close to being non-invertible, 
which requires bounds on the left tail of the smallest singular of $\mathbb{X}$. 
Understanding the extreme singular values of random matrices is a fascinating topic, but it is outside of the scope of these notes, and so we refer the reader to~\cite{BubeckGanguly15} for more details on this point.

Finally, the high-dimensional entropic CLT can now be applied to see that 
$\mathrm{Ent} \left( QA X \, \| \, \gamma_{n} \right) \lesssim n^{2} / d$. 
From the induction on $n$ we get another factor of $n$, arriving at 
$\mathrm{Ent} \left( \mathcal{W}_{n,d} \, \| \, \mathcal{G}_{n} \right) \lesssim n^{3} / d$. 
We conclude  that the dimension threshold is $d \approx n^{3}$, 
and the information-theoretic proof that we have outlined sheds light on why this threshold is $n^{3}$.

% %%%%%%%%%%%%%%%%%%%%%%%%%%%%%%%%%%%%%%%%%%%%%%%%%%%%%%%%%%%%%%
% \subsection{Further notes and comments} \label{sec:notes3} %%%
% %%%%%%%%%%%%%%%%%%%%%%%%%%%%%%%%%%%%%%%%%%%%%%%%%%%%%%%%%%%%%%

\newpage

%%%%%%%%%%%%%%%%%%%%%%%%%%%%%%%%%%%%%%%%%%%%%%%%%%%%%%%%%%%%%%%%%%%%%%%%%%%%%%%%%%%%%%%%%%%%%%%%%%%%%%%%%%%%%%%%%%%%%%%%%%
\section{Lectures 4 \& 5: Confidence sets for the root in uniform and preferential attachment trees} \label{sec:lec45} %%%
%%%%%%%%%%%%%%%%%%%%%%%%%%%%%%%%%%%%%%%%%%%%%%%%%%%%%%%%%%%%%%%%%%%%%%%%%%%%%%%%%%%%%%%%%%%%%%%%%%%%%%%%%%%%%%%%%%%%%%%%%%

In the previous lectures we studied random graph models with community structure and also models with an underlying geometry. 
While these models are important and lead to fascinating problems, they are also static in time. 
Many real-world networks are constantly evolving, and their understanding requires models that reflect this. 
This point of view brings about a host of new interesting and challenging statistical inference questions that concern the temporal dynamics of these networks.

In the last two lectures we will study such questions: 
given the current state of a network, can one infer the state at some previous time? 
Does the initial \emph{seed} graph have an influence on how the network looks at large times? 
If so, is it possible to find the origin of a large growing network? 
We will focus in particular on this latter question. 
More precisely, given a model of a randomly growing graph starting from a single node, called the \emph{root}, we are interested in the following question. 
Given a large graph generated from the model, is it possible to find a small set of vertices for which we can guarantee that the root is in this set with high probability? 
Such \emph{root-finding algorithms} can have applications to finding the origin of an epidemic or a rumor.

\subsection{Models of growing graphs}

A natural general model of randomly growing graphs can be defined as follows. 
For $n \geq k \geq 1$ and a graph $S$ on $k$ vertices, define the random graph $G(n,S)$ by induction. 
First, set $G(k,S) = S$; we call $S$ the \emph{seed} of the graph evolution process. 
Then, given $G(n,S)$, $G(n+1, S)$ is formed from $G(n,S)$ by adding a new vertex and some new edges according to some adaptive rule. 
If $S$ is a single vertex, we write simply $G(n)$ instead of $G(n,S)$. 
There are several rules one can consider; here we study perhaps the two most natural rules: uniform attachment and preferential attachment. 
Moreover, for simplicity we focus on the case of growing \emph{trees}, where at every time step a single edge is added.

Uniform attachment trees are perhaps the simplest model of randomly growing graphs and are defined as follows. 
For $n \geq k \geq 1$ and a tree $S$ on $k$ vertices, the random tree $\UA(n,S)$ is defined as follows. 
First, let $\UA(k,S) = S$. 
Then, given $\UA(n,S)$, $\UA(n+1,S)$ is formed from $\UA(n,S)$ 
by adding a new vertex $u$ and 
adding a new edge $uv$ where the vertex $v$ is chosen uniformly at random among vertices of $\UA \left( n, S \right)$, independently of all past choices. 

\begin{wrapfigure}[13]{r}{0.5\textwidth}%[h!]
 \centering
 \includegraphics[width=0.35\textwidth]{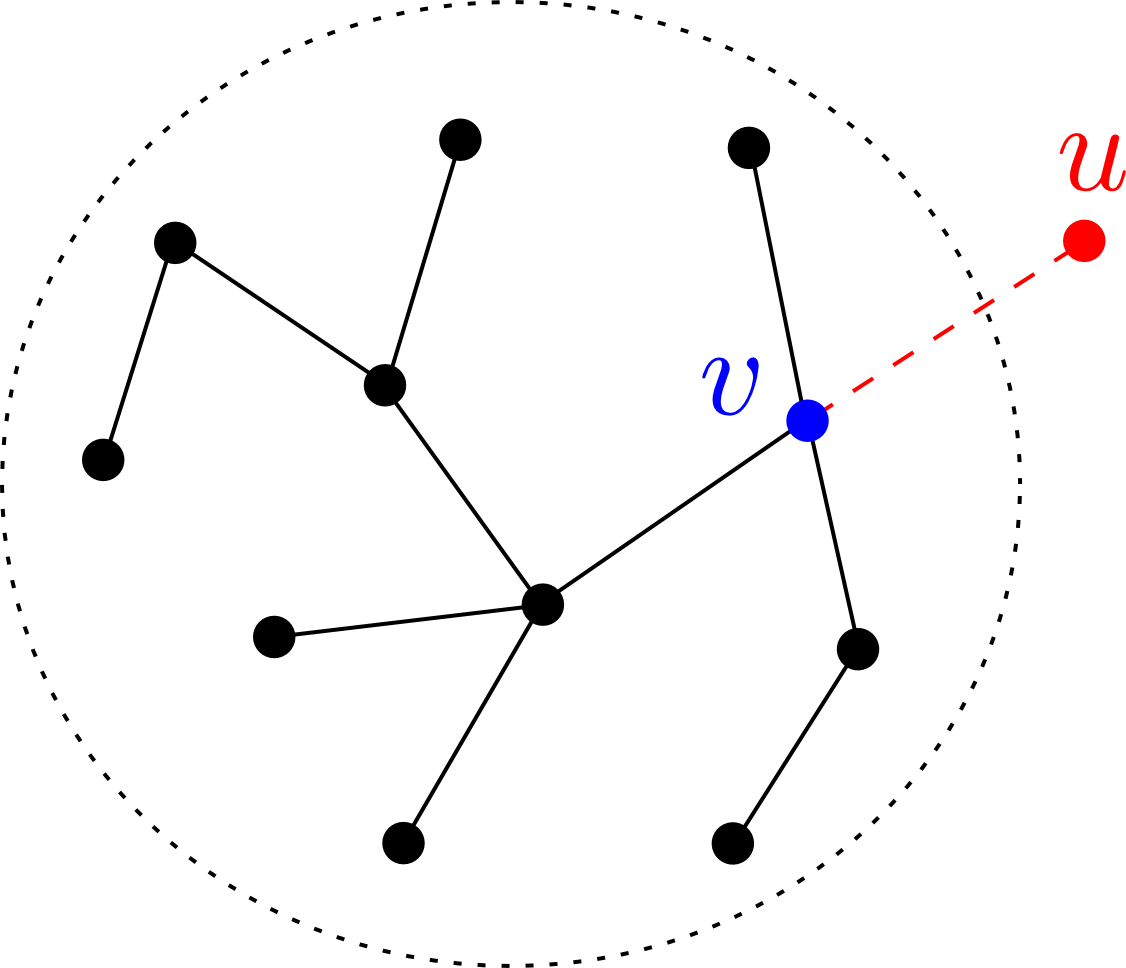}
 \caption{Growing trees: add a new vertex $u$ and attach it to an existing vertex $v$ according some adaptive probabilistic rule.}
 \label{fig:tree_growth}
\end{wrapfigure}
In preferential attachment the vertex is chosen with probability proportional to its degree~\cite{Mah92,BA99,BRST01}. 
For a tree $T$ denote by $d_{T} (u)$ the degree of vertex $u$ in $T$. 
For $n \geq k \geq 2$ and a tree $S$ on $k$ vertices we define the random tree $\PA(n, S)$ by induction. 
First, let $\PA(k, S)=S$. 
Then, given  $\PA(n,S)$, 
$\PA(n+1,S)$ is formed from $\PA(n,S)$ by adding a new vertex $u$ and a new edge $uv$ 
where $v$ is selected at random among vertices in $\PA(n,S)$ according to the following probability distribution: 
\[
\P\left( v = i \, \middle| \, \PA(n, S) \right) 
= \frac{d_{\PA(n, S)}(i)}{2 \left( n - 1 \right)}.
\]

\subsection{Questions: detection and estimation}

The most basic questions to consider are those of 
\emph{detection} and \emph{estimation}. 
Can one detect the influence of the initial seed graph? 
If so, is it possible to estimate the seed? 
Can one find the root if the process was started from a single node? 
We introduce these questions in the general model of randomly growing graphs described above, 
even though we study them in the special cases of uniform and preferential attachment trees later.

The detection question can be rephrased in the terminology of hypothesis testing. 
Given two potential seed graphs $S$ and $T$, and an observation $R$ which is a graph on $n$ vertices, 
one wishes to test whether 
$R \sim G(n, S)$ or 
$R \sim G(n, T)$. 
% A natural example to keep in mind is to consider $S$ being a star and $T$ being a path. 
The question then boils down to whether one can design a test with asymptotically (in $n$) nonnegligible power. 
This is equivalent to studying the total variation distance between $G(n, S)$ and $G(n, T)$, so we naturally define 
\begin{equation*}\label{def:delta}
\delta(S, T) 
:= \lim_{n \to \infty} \mathrm{TV}(G(n, S), G(n, T)),
\end{equation*}
where $G(n,S)$ and $G(n,T)$ are random elements in the finite space of unlabeled graphs with $n$ vertices. 
This limit is well-defined because 
$\mathrm{TV}(G(n, S), G(n, T))$ is nonincreasing in $n$ 
(since if $G(n,S) = G(n,T)$, then the evolution of the random graphs can be coupled such that $G(n', S) = G(n', T)$ for all $n' \geq n$) 
and always nonnegative.

If the seed has an influence, it is natural to ask whether one can estimate $S$ from $G(n,S)$ for large $n$. If so, can the subgraph corresponding to the seed be located in $G(n,S)$? 
We study this latter question in the simple case when the process starts from a single vertex called the \emph{root}.\footnote{In the case of preferential attachment, starting from a single vertex is not well-defined; in this case we start the process from a single edge and the goal is to find one of its endpoints.} 
A \emph{root-finding algorithm} is defined as follows. 
Given $G(n)$ and a target accuracy $\eps \in (0,1)$, 
a root-finding algorithm outputs a set $H\left( G(n), \eps \right)$ of $K(\eps)$ vertices such that 
the root is in $H\left( G(n), \eps \right)$ with probability at least $1-\eps$ (with respect to the random generation of $G(n)$).

An important aspect of this definition is that the size of the output set is allowed to depend on $\eps$, but not on the size $n$ of the input graph. 
Therefore it is not clear that root-finding algorithms exist at all. 
Indeed, there are examples when they do not exist: consider a path that grows by picking one of its two ends at random and extending it by a single edge. 
However, it turns out that in many interesting cases root-finding algorithms do exist. 
In such cases it is natural to ask for the best possible value of $K(\eps)$.

\subsection{The influence of the seed}

Consider distinguishing between 
a preferential attachment tree started from a star with $10$ vertices, $S_{10}$, 
and a preferential attachment tree started from a path with $10$ vertices, $P_{10}$. 
Since the preferential attachment mechanism incorporates the rich-get-richer phenomenon, 
one expects the degree of the center of the star in $\PA(n,S_{10})$ to be significantly larger 
than the degree of any of the initial vertices in the path in $\PA(n,P_{10})$. 
This intuition guided Bubeck, Mossel, and R\'acz~\cite{BMR15} when they initiated the theoretical study of the influence of the seed in preferential attachment trees. 
They showed that this intuition is correct: 
the limiting distribution of the maximum degree of the preferential attachment tree indeed depends on the seed. 
Using this they were able to show that for any two seeds $S$ and $T$ with at least $3$ vertices\footnote{This condition is necessary for a simple reason: the unique tree on $2$ vertices, $S_{2}$, is always followed by the unique tree on $3$ vertices, $S_{3}$, and hence $\delta (S_{2}, S_{3}) = 0$ for any model of randomly growing trees.} and different degree profiles we have 
$\delta_{\PA} (S,T) > 0$.

However, statistics based solely on degrees cannot distinguish all pairs of nonisomorphic seeds. 
This is because if $S$ and $T$ have the same degree profiles, 
then it is possible to couple $\PA(n,S)$ and $\PA(n,T)$ such that they have the same degree profiles for every $n$. 
In order to distinguish between such seeds, it is necessary to incorporate information about the graph structure into the statistics that are studied. 
This was done successfully by Curien, Duquesne, Kortchemski, and Manolescu~\cite{CDKM15}, 
who analyzed statistics that measure the \emph{geometry} of large degree nodes. 
These results can be summarized in the following theorem.

\begin{theorem}\label{thm:main_PA} 
The seed has an influence in preferential attachment trees in the following sense. 
For any trees $S$ and $T$ that are nonisomorphic and have at least $3$ vertices, 
we have $\delta_{\PA}(S,T) > 0$. 
\end{theorem}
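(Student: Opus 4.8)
The plan is to follow the two-step strategy behind the results of Bubeck, Mossel, and R\'acz~\cite{BMR15} and of Curien, Duquesne, Kortchemski, and Manolescu~\cite{CDKM15}, splitting the argument according to whether the seeds $S$ and $T$ have the same degree profile. Since $\delta_{\PA}(S,T) = \lim_{n\to\infty}\TV(\PA(n,S),\PA(n,T))$ and this limit is well defined, it suffices in each case to exhibit an isomorphism-invariant statistic $\Phi_n$ of the unlabeled tree whose laws under the two seeds stay bounded apart as $n\to\infty$; this can be done either by showing that $\Phi_n$ converges in distribution to two different limits, or, through a Chebyshev argument as in the earlier lectures, by showing that $|\E_S \Phi_n - \E_T \Phi_n|$ eventually dominates the standard deviations of $\Phi_n$ under both models.

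First I would treat the case where $S$ and $T$ have different degree profiles. The engine here is the P\'olya urn embedding of preferential attachment: the degrees of the $k$ seed vertices, together with the aggregate contribution of all later arrivals, evolve as a generalized P\'olya urn. Consequently the rescaled degree vector $n^{-1/2}\bigl(d_{\PA(n,S)}(v_1),\dots,d_{\PA(n,S)}(v_k)\bigr)$ converges almost surely to a positive random vector whose law depends only on the multiset of initial degrees $\{d_S(v_1),\dots,d_S(v_k)\}$ (it arises from the limiting composition of a P\'olya-type urn seeded by these degrees, and can be written through $\Ga$ variables). Taking $\Phi_n$ to be the vector of these rescaled degrees listed in decreasing order, its limiting law differs between $S$ and $T$ as soon as the degree profiles do---already the top coordinate, the rescaled maximum degree, has a strictly different distribution---so $\TV(\PA(n,S),\PA(n,T))\not\to 0$.

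Next I would treat the case where $S$ and $T$ are nonisomorphic but share the same degree profile. Here degree statistics are powerless: one can couple $\PA(n,S)$ and $\PA(n,T)$ so that they have identical degree profiles at every step, so $\delta_{\PA}$ cannot be certified by degrees alone. Instead, following~\cite{CDKM15}, I would track the \emph{geometry} of the large-degree vertices. With high probability the vertices of largest degree are exactly the seed vertices; one refines the urn embedding so that it records not only how much mass each old vertex accrues but also the tree structure joining the old vertices, and then shows that the subtree of $\PA(n,S)$ spanned by the highest-degree vertices---decorated, say, by the rescaled sizes of the large pendant subtrees hanging from them---converges to a limit random object that encodes the full combinatorial type of $S$, and not merely its degree sequence. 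Since $S\not\cong T$, these limits differ, which again keeps the two laws apart.

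The hard part will be the second case. In the first case the urn embedding and its Gamma/Dirichlet-type limit law are essentially classical; in the second case one must propagate through the embedding enough information about the \emph{distances} between the distinguished vertices, and this introduces correlations that are delicate to control. Establishing convergence of the geometric statistic typically proceeds by passing to a continuous-time branching-process representation together with martingale limit arguments, and the genuinely subtle point---which is the core of~\cite{CDKM15}---is to show that the limit object is \emph{faithful}: that no two nonisomorphic seeds with matching degree profiles can give rise to the same limit.
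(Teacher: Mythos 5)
Your proposal reproduces exactly the two-step strategy described in the paper: when $S$ and $T$ have different degree profiles, distinguish them via the limiting law of the rescaled degree vector following~\cite{BMR15}; when they share a degree profile, use the geometry of the high-degree vertices following~\cite{CDKM15}, since degree statistics alone fail by the coupling argument you correctly note. The paper itself provides no more detail than this---it states the theorem as a summary and cites the two references for the technical work---so your sketch matches its account of the proof at essentially the same level of granularity, including the identification of the ``faithfulness'' of the limit object in~\cite{CDKM15} as the genuinely delicate point.
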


In the case of uniform attachment, degrees do not play a special role, so initially one might even think that the seed has no influence in the limit. 
However, it turns out that the right perspective is not to look at degrees but rather the sizes of appropriate subtrees (we shall discuss such statistics later). 
By extending the approach of Curien~et~al.~\cite{CDKM15} to deal with such statistics, Bubeck, Eldan, Mossel, and R\'acz~\cite{BEMR16} showed that the seed has an influence in uniform attachment trees as well.

\begin{theorem}\label{thm:main_UA} 
The seed has an influence in uniform attachment trees in the following sense. 
For any trees $S$ and $T$ that are nonisomorphic and have at least $3$ vertices, 
we have $\delta_{\UA}(S,T) > 0$. 
\end{theorem}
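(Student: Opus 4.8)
The natural plan is to mimic the strategy behind Theorem~\ref{thm:main_PA} of Curien, Duquesne, Kortchemski and Manolescu, but to replace \emph{degree}-based statistics by \emph{subtree-size}-based ones: in uniform attachment all degrees are $O(\log n)$, so the macroscopic fingerprint of the seed must be read off from the sizes of the subtrees it spawns rather than from degrees.

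First I would reduce the statement to exhibiting a single distinguishing statistic. It suffices to construct a map $\Phi$ from finite unlabelled trees into some Polish space such that, for every seed $S$, $\Phi(\UA(n,S))$ converges in distribution to a law $\mu_S$ as $n\to\infty$, with $\mu_S\neq\mu_T$ whenever $S\not\cong T$. Indeed $\Phi(\UA(n,\cdot))$ is a (possibly randomized) function of the observed unlabelled tree, so by the data-processing inequality $\TV(\UA(n,S),\UA(n,T))\ge\TV(\Phi(\UA(n,S)),\Phi(\UA(n,T)))$; choosing a bounded continuous $g$ with $\int g\,d\mu_S\neq\int g\,d\mu_T$ and using weak convergence yields $\liminf_n \TV(\UA(n,S),\UA(n,T))\ge |\int g\,d\mu_S-\int g\,d\mu_T|/(2\|g\|_\infty)>0$. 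Since the total variation distance is nonincreasing in $n$, this liminf equals $\delta_{\UA}(S,T)$, which is therefore positive.

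The statistic I would use records a ``macroscopic subtree skeleton''. Fix a threshold $\epsilon>0$; for an unlabelled tree $t$ on $N$ vertices, keep only the edges $e$ for which both components of $t-e$ have more than $\epsilon N$ vertices (these form a connected core), and decorate each vertex $v$ of this core with the sorted vector of normalized sizes $|C|/N$ of those components $C$ of $t-v$ with $|C|>\epsilon N$. Call the resulting finite decorated combinatorial object $\Phi_\epsilon(t)$, and let $\Phi$ be the family $(\Phi_\epsilon)_{\epsilon}$. For $\UA(n,S)$ this converges: any edge present from a fixed finite time onwards (in particular every seed edge, and recursively every ``early'' edge inside a component) splits the tree into two sides whose sizes form a two-colour P\'olya urn, hence a bounded martingale, hence converge almost surely; iterating inside each component identifies the limit $\mu_{S,\epsilon}$ as the law of an $\epsilon$-truncation of the seed $S$ with its edges ``blown up'' by finitely many extra macroscopic pendant subtrees and decorated by Dirichlet-type weights. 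As $\epsilon\to0$ the $\mu_{S,\epsilon}$ are consistent and define $\mu_S$.

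The main obstacle is the injectivity of $S\mapsto\mu_S$, and this is where the real work lies. The combinatorial skeleton on its own is not enough — a ``blown-up'' path can contain high-degree vertices and a ``blown-up'' star can contain long paths — so one must exploit the \emph{joint} law of the skeleton together with its size-decorations: the distribution of the Dirichlet-type weights at a vertex encodes how many genuine seed-branches pass through it, which lets one reconstruct from $\mu_S$ the contraction of the skeleton onto the seed vertices, i.e.\ $S$ itself. Making this rigorous requires (a) showing that no spurious macroscopic subtrees corrupt the skeleton and that the almost-sure convergence above is uniform enough to survive the limit $\epsilon\to0$, (b) handling the case $|S|\neq|T|$, and (c) checking throughout that $\Phi$ is genuinely a function of the \emph{unlabelled} tree, since the seed vertices are not marked in the observation — which is exactly why the seed has to be characterized intrinsically as its macroscopic skeleton. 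Adapting the Curien--Duquesne--Kortchemski--Manolescu machinery, built for degrees, to this size-based setting is the technical heart of the proof.
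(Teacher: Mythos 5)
Your reduction is correct and your high-level intuition matches the paper's: the notes say explicitly that Bubeck--Eldan--Mossel--R\'acz extend the Curien--Duquesne--Kortchemski--Manolescu machinery by replacing degree-based statistics with subtree-size-based ones, which is exactly the shift you identify. The data-processing/Portmanteau argument that a weakly convergent statistic $\Phi$ with $\mu_S\neq\mu_T$ forces $\delta_{\UA}(S,T)>0$ is standard and valid, and the P\'olya-urn convergence of normalized subtree sizes is exactly the right primitive to build on.

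Where the proposal departs from the cited proof, and where it has a genuine gap, is in the choice of $\Phi$ and the injectivity step. BEMR16 (like CDKM) do not attempt to build a limiting ``macroscopic skeleton'' and then invert it to recover $S$. Instead they construct a family of \emph{real-valued observables} indexed by finite decorated trees---sums over embeddings of the decorated tree into $\UA(n,S)$, weighted by products of powers of normalized subtree sizes---and show these are (asymptotic) martingales whose limit laws have moments computable by a recursion in the size of the decoration; two nonisomorphic seeds are then separated by a moment mismatch for some finite decorated tree. This reduces the whole problem to a finite-dimensional moment computation and avoids ever analyzing the full limit object. Your $\epsilon$-skeleton, by contrast, is a complicated random combinatorial structure that at any fixed $\epsilon$ is dominated by non-seed vertices with their own Dirichlet decorations; the seed's imprint is delocalized across all of them, and the key claim---that the joint law of skeleton plus Dirichlet weights determines $S$, because ``the Dirichlet-type weights at a vertex encode how many genuine seed-branches pass through it''---is asserted, not argued. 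That claim \emph{is} the theorem, so as written the proposal restates the difficulty rather than resolving it. To repair it you would either need to carry out the inverse problem (distinguish seed vertices from later-arriving macroscopic vertices by their Dirichlet parameters, uniformly over $\epsilon$, which is substantially harder than what BEMR16 do), or switch to the observable/martingale route, where separation follows from an explicit computation rather than an identifiability argument about an infinite-dimensional limit.
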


These results, together with a lack of examples showing opposite behavior, suggest that for most models of randomly growing graphs the seed has influence.

\begin{question}\label{q:seed}
How common is the phenomenon observed in Theorems~\ref{thm:main_PA} and~\ref{thm:main_UA}? 
Is there a natural large class of randomly growing graphs for which the seed has an influence?  
That is, models where for any two seeds $S$ and $T$ (perhaps satisfying an extra condition), 
we have $\delta (S,T) > 0$. 
Is there a natural model where the seed has no influence?
\end{question}

The extra condition mentioned in the question could be model-dependent, but should not be too restrictive. 
It would be fascinating to find a natural model where the seed has no influence in a strong sense. 
Even for models where the seed does have an influence, proving the statement in full generality is challenging and interesting.

\subsection{Finding Adam}

These theorems about the influence of the seed open up the problem of \emph{finding} the seed. 
Here we present the results of Bubeck, Devroye, and Lugosi~\cite{BDL16} who first studied root-finding algorithms in the case of uniform attachment and preferential attachment trees. 

They showed that root-finding algorithms indeed exist for preferential attachment trees and that the size of the best confidence set is polynomial in $1/\eps$. 
\begin{theorem}\label{thm:root_PA}
There exists a polynomial time root-finding algorithm for preferential attachment trees with 
$K(\eps) \leq c \tfrac{\log^{2} (1/\eps)}{\eps^{4}}$ 
for some finite constant $c$. 
Furthermore, there exists a positive constant $c'$ such that 
any root-finding algorithm for preferential attachment trees must satisfy 
$K(\eps) \geq \tfrac{c'}{\eps}$. 
\end{theorem}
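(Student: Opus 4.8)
I would prove the upper and lower bounds separately, and I expect the upper bound to be the substantial part.

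\emph{The confidence set.} For a tree $G$ on $n$ vertices and a vertex $v$, let $\psi_G(v)$ denote the number of vertices in the largest connected component of $G\setminus\{v\}$. I propose to take $H(G(n),\eps)$ to be the set of the $K(\eps):=\lceil c\log^2(1/\eps)/\eps^4\rceil$ vertices with the smallest values of $\psi_{G(n)}$, ties broken arbitrarily; this is computable in polynomial time. Writing $v_1$ for the root (an endpoint of the seed edge; capturing $v_1$ suffices), it is enough to exhibit $a=a(\eps)=\Theta(\eps^2)$ for which (i) $\p\big(\psi_{\PA(n)}(v_1)\geq(1-a)n\big)\leq\eps/2$, and (ii) $\p\big(\#\{v:\psi_{\PA(n)}(v)<(1-a)n\}>K(\eps)\big)\leq\eps/2$: off both bad events $v_1$ lies strictly below the threshold and at most $K(\eps)$ vertices do, so $v_1\in H(\PA(n),\eps)$, and a union bound gives failure probability at most $\eps$. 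For (i), root $\PA(n)$ at $v_1$: the components of $\PA(n)\setminus\{v_1\}$ are the pending subtrees hanging off its neighbours, and the classical Pólya-urn representation of preferential attachment shows that the vector of their sizes, normalized by $n$, converges almost surely to a nondegenerate limit (of residual-allocation type) whose largest coordinate is almost surely strictly below $1$, with a Beta-like tail $\p(\cdot>1-a)=O(\sqrt a)$; taking $a=\Theta(\eps^2)$ and using a uniform-in-$n$ estimate from the martingale structure of the urn yields (i).

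\emph{Counting balanced vertices.} For (ii): if $\psi_{\PA(n)}(v)<(1-a)n$ for some $v\neq v_1$, then in particular the component of $\PA(n)\setminus\{v\}$ containing $v_1$ has fewer than $(1-a)n$ vertices, so the pending subtree rooted at $v$ (on the side away from $v_1$) has at least $an$ vertices. Hence it suffices to bound the number $N_a$ of vertices whose pending subtree has size at least $an$. These vertices form a subtree containing $v_1$, and using the Pólya-urn control of individual pending-subtree sizes in a preferential attachment tree — a vertex born at time $t$ has a pending subtree whose normalized size has a tail decaying in $t$ — one shows $\E N_a=\poly(1/a)$, and then, by peeling off the $\poly(1/a)$ oldest vertices and controlling the contribution of the remaining (late-born) ones, that $N_a\leq c\log^2(1/\eps)/\eps^4$ with probability at least $1-\eps/2$; the concentration step is where the logarithmic factors enter.

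\emph{The lower bound.} Here the strategy is to show that the posterior law of the root given the observed unlabelled tree is, with probability bounded below, spread over $\Omega(1/\eps)$ vertices. Concretely, I would exhibit an event $E$ with $\p(E)$ bounded below uniformly in $n$, together with $m=\Theta(1/\eps)$ vertices $w_1,\dots,w_m$ of $\PA(n)$ and measure-preserving relabellings of the generating history that permute the $w_i$, so that conditionally on $E$ and on the unlabelled tree each $w_i$ is the root with the same probability, hence each with probability $\Omega(\eps)$. Any fixed root-finding set $H$ with $|H|<c'/\eps$ then omits a constant fraction of the $w_i$, so $\p(\mathrm{root}\notin H)\geq\p(E)\cdot\Omega(1)>\eps$ for $\eps$ small, which forces $K(\eps)\geq c'/\eps$. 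The mechanism behind such a symmetry is that, viewed from the root, the early structure of a preferential attachment tree converges to a universal random limit in which the identity of the oldest among the first $\Theta(1/\eps)$ vertices cannot be recovered.

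\emph{Main obstacle.} The crux is step (ii): controlling, with high probability, the number of vertices of a preferential attachment tree having a macroscopically large pending subtree. This is precisely where the ``bushiness'' of preferential attachment must be used quantitatively — for a growing path no such bound holds, and there root-finding is genuinely impossible — and it requires reasonably sharp simultaneous control of many dependent Pólya urns. By contrast, the Beta-type tail in (i) and the exchangeability underlying the lower bound are comparatively soft.
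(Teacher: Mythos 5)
Your upper bound proposal matches the paper's approach: the paper uses precisely the centroid-based estimator $H_\psi$ (the $K$ vertices minimizing $\psi$, the size of the largest component upon deletion), decomposes the failure event into ``the root has large $\psi$'' plus ``too many other vertices have small $\psi$,'' and controls both via the P\'olya urn representation of subtree sizes. The paper only carries out the calculation in full for uniform attachment (Theorem~\ref{thm:UA_centroid}) and states that the preferential attachment case mirrors it with a few additional steps, deferring to~\cite{BDL16}; your sketch with $a=\Theta(\eps^2)$ and the observation that the concentration step supplies the logarithmic factors is consistent with that.

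For the lower bound the overall idea (manufacture an event on which the root is exchangeable with $\Theta(1/\eps)$ other vertices) is right, but your formulation has a real gap: you ask for an event $E$ with $\p(E)$ bounded below \emph{uniformly in $n$}. No such persistent symmetry event exists in the form you need — for instance, the probability that the root is a leaf of $\PA(n)$ is $\Theta(1/\sqrt{n})\to 0$. The paper sidesteps this entirely by first observing that the error probability of the optimal procedure is non-decreasing in $n$ (given $\PA(n)$ one can simulate forward to $\PA(n')$ for any $n'>n$ and apply the time-$n'$ estimator, so the time-$n$ optimum is at least as good). This reduces the task to exhibiting a \emph{single} $n$ at which the optimal error is already $\geq\eps$, and the paper takes $n=\Theta(1/\eps^2)$. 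At that $n$, the root is a leaf with probability $\tfrac12\cdot\tfrac34\cdots(1-\tfrac{1}{2n})=\Theta(1/\sqrt n)=\Theta(\eps)$, and conditionally on this, by Theorem~\ref{thm:Polya_2011} vertex $2$ has $\Theta(\sqrt n)=\Theta(1/\eps)$ leaf children with constant probability; those leaves are graph-isomorphic to the root, which is exactly your exchangeable family $w_1,\dots,w_m$. You should replace the ``uniformly in $n$'' requirement with the monotonicity-in-$n$ reduction, and then your abstract symmetry argument instantiates cleanly as this concrete leaf-star configuration.
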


They also showed the existence of root-finding algorithms for uniform attachment trees. In this model, however, there are confidence sets whose size is \emph{subpolynomial} in $1/\eps$. Moreover, the size of any confidence set has to be at least \emph{superpolylogarithmic} in $1/\eps$. 
\begin{theorem}\label{thm:root_UA}
There exists a polynomial time root-finding algorithm for uniform attachment trees with 
$K(\eps) \leq \exp \left( c \tfrac{\log (1/\eps)}{\log \log (1/\eps)} \right)$ 
for some finite constant $c$. 
Furthermore, there exists a positive constant $c'$ such that 
any root-finding algorithm for uniform attachment trees must satisfy 
$K(\eps) \geq \exp \left( c' \sqrt{\log (1/\eps)} \right)$.
\end{theorem}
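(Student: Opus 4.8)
The plan is to recast the problem exactly. Writing $t$ for the observed unlabelled tree on $n$ vertices, and for $v \in V(t)$ letting $t_v(u)$ denote the subtree rooted at $u$ when $t$ is rooted at $v$, a direct computation with the uniform attachment rule together with the classical count of linear extensions of a rooted tree (the hook-length formula) gives
\[
\P(\,\text{root}=v \mid \UA(n)=t\,) \;\propto\; \prod_{u \neq v} \frac{1}{|t_v(u)|}.
\]
Hence the optimal confidence set of size $K$ is exactly the set of $K$ vertices of largest ``root-likelihood'' $L_t(v) := \prod_{u\neq v}|t_v(u)|^{-1}$, and Theorem~\ref{thm:root_UA} becomes a statement about how concentrated this posterior is. The upper bound amounts to showing that, with probability at least $1-\eps$ over $\UA(n)$, all but an $\eps$-fraction of the posterior mass lies on the top $\exp(c\log(1/\eps)/\log\log(1/\eps))$ vertices \emph{uniformly in $n$}; the lower bound amounts to exhibiting an event of probability at least $\eps$ on which the posterior spreads, roughly uniformly, over at least $\exp(c'\sqrt{\log(1/\eps)})$ vertices.

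\textbf{Upper bound.} The structural engine is that moving the root across an edge $\{v,w\}$ multiplies $L_t$ by $|t_v(w)| / (n - |t_v(w)|)$, the ratio of the two pieces into which that edge splits $t$. Consequently the maximiser $v^{*}$ of $L_t$ is a centroid-type vertex (every incident subtree has size $< n/2$), and along any path leaving $v^{*}$ the likelihood decays, at the $i$-th step, by a factor $a_i/(n-a_i) < 1$ where $a_i$ is a subtree size that strictly decreases along the path. The remaining task is to count, uniformly in $n$, how many vertices can satisfy $L_t(v) \ge \theta\, L_t(v^{*})$. First I would establish the probabilistic input: in uniform attachment the normalised subtree sizes converge almost surely (a Pólya-urn / martingale argument), so with high probability only $O_\theta(1)$ edges split $t$ into two pieces each of size $\gtrsim \theta n$, and more generally the large subtrees are few and hierarchically separated. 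Feeding this into the deterministic decay estimate controls how many branches one may follow from $v^{*}$ while keeping the running product of ratios above $\theta$; combining the number of admissible scales with the branching at each scale, and then choosing $\theta \asymp \eps/\poly$ and union-bounding, is what produces the bound $\exp(c\log(1/\eps)/\log\log(1/\eps))$. Turning the ``typical subtree sizes'' heuristics into an honest uniform-in-$n$ estimate is the delicate part.

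\textbf{Lower bound.} Fix $\eps$ and set $m := \exp(c'\sqrt{\log(1/\eps)})$. It suffices to produce an event $G$ with $\P(G) \ge 2\eps$ on which $\P(\cdot \mid \UA(n))$ puts mass at most $1/m$ on every vertex and in fact spreads nearly uniformly over at least $m$ vertices: then any output set $H$ with $|H| < m/2$ retains posterior mass at most $\tfrac12$ on $G$, so it misses the root with conditional probability at least $\tfrac12$, hence unconditionally with probability at least $\eps$, contradicting the definition of a root-finding algorithm with $K(\eps) < m/2$. The event $G$ is built by constraining the early evolution into a recursively balanced configuration across $\Theta(\log m)$ scales. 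The reason $m$ can be this large --- rather than merely $\log(1/\eps)/\log\log(1/\eps)$, which is what a naively forced degree-$d$ hub on the first $d + O(1)$ vertices gives, costing $\asymp 1/d!$ to produce and hence reaching only $d \asymp \log(1/\eps)/\log\log(1/\eps)$ within a budget of $\eps$ --- is that one constrains only the \emph{approximate sizes} of the pieces, leaving their internal shapes free, so that the $j$-th scale contributes a probability factor of order $2^{-\Theta(j)}$ and the total cost is $\exp(-\Theta((\log m)^2)) = \eps$; in precisely this regime $\Theta(\log m)$ balanced scales put $m$ vertices within a comparable window of $L_{\UA(n)}(v^{*})$, since across each scale the likelihood only drops by a bounded factor. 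Granting the construction, the two-point argument above closes the proof.

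\textbf{Main obstacle.} In my view the crux is the counting estimate behind the upper bound: bounding, uniformly in $n$ and with the correct $\eps$-dependence, the number of vertices whose root-likelihood is within a prescribed multiplicative window of the maximum, since this is where the branching geometry of the random tree has to be reconciled with the hierarchical decay of subtree sizes. On the lower-bound side the analogous difficulty is getting the scale-versus-probability trade-off exactly right, and the surviving gap between $\exp(c\log(1/\eps)/\log\log(1/\eps))$ and $\exp(c'\sqrt{\log(1/\eps)})$ is a sign that neither side is yet fully understood.
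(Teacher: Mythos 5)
Your starting point is exactly right and matches what the paper reports: the posterior over the root under uniform attachment is explicitly computable, and one has
$\P(\text{root}=v \mid \UA(n)=t) \propto \prod_{u \neq v} |t_v(u)|^{-1}$,
which is just the count of increasing labelings of $t$ rooted at $v$ (the hook-length formula). The paper itself does not prove Theorem~\ref{thm:root_UA}; it only sketches the structure and refers to~\cite{BDL16}, noting that the MLE can be written down explicitly, that the subpolynomial confidence set is obtained from a certain \emph{tighter relaxation} of the MLE (with the centroid estimator of Theorem~\ref{thm:UA_centroid} being a coarser relaxation), and that the lower bound comes from exhibiting a family of trees of not-too-small probability on which the MLE fails. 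Your outline is in the same spirit and identifies the same three ingredients: the explicit posterior, a counting estimate controlling how many vertices have near-maximal root-likelihood, and a balanced recursive construction for the lower bound whose probability cost $\exp(-\Theta((\log m)^2))$ trades off against the number $m$ of near-indistinguishable candidates.

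Two caveats worth flagging. First, you propose to analyze the MLE itself, whereas the paper (following~\cite{BDL16}) explicitly says the algorithm achieving $\exp(c\log(1/\eps)/\log\log(1/\eps))$ is a relaxation of the MLE, not the MLE; the relaxation is introduced precisely because the full product $\prod_u |t_v(u)|$ is hard to control directly, so bounding a truncated or surrogate functional is what makes the counting tractable. Your plan may still be salvageable, but it is not clear it will close without passing through some such relaxation. Second, the heart of the upper bound --- the uniform-in-$n$ counting estimate reconciling the hierarchical decay of subtree sizes with the branching of the random tree --- is precisely the part you leave as ``the delicate part,'' and the paper agrees: it calls this the most technical part of~\cite{BDL16}. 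So the proposal captures the correct architecture, including the two-point argument that turns a spread posterior into a lower bound on $K(\eps)$, but neither the counting estimate nor the explicit recursive construction are carried out, which is where essentially all the work lies.
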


These theorems show an interesting quantitative difference between the two models: 
finding the root is exponentially more difficult in preferential attachment than in uniform attachment. 
While this might seem counter-intuitive at first, the reason behind this can be traced back to the rich-get-richer phenomenon: 
the effect of a rare event where not many vertices attach to the root gets amplified by preferential attachment, making it harder to find the root.

In the remaining part of these lectures we explain the basic ideas that go into proving Theorems~\ref{thm:root_PA} and~\ref{thm:root_UA} and prove some simpler special cases. 
Before we do so, we give a primer on P\'olya urns, whose variants appear throughout the proofs. 
If the reader is familiar with P\'olya urns, then the following subsection can be safely skipped.

\subsection{P\'olya urns: the building blocks of growing graph models}\label{sec:polya}

While uniform attachment and preferential attachment are arguably the most basic models of randomly growing graphs, 
the evolution of various simple statistics, such as degrees or subtree sizes, can be described using even simpler building blocks: P\'olya urns. 
This subsection aims to give a brief introduction into the well-studied world of P\'olya urns, while simultaneously showing examples of how these urn models show up in uniform attachment and preferential attachment. 

\subsubsection{The classical P\'olya urn}\label{sec:polya_classical}

The classical P\'olya urn~\cite{Polya23} starts with an urn filled with $b$ blue balls and $r$ red balls. 
Then at every time step you put your hand in the urn, without looking at its contents, and take out a ball sampled uniformly at random. 
You observe the color of the ball, put it back into the urn, together with another ball of the same color. 
This process is illustrated in Figure~\ref{fig:Polya_urn}. 
\begin{figure}[h!]
 \centering
 \includegraphics[width=0.85\textwidth]{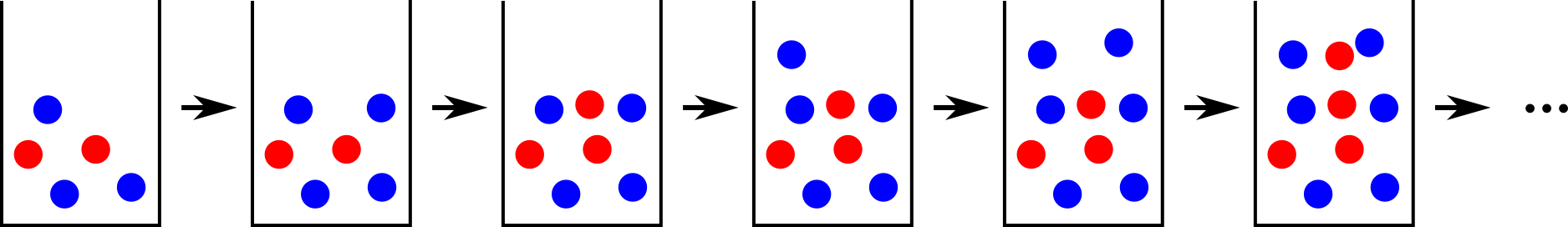}
 \caption{A realization of P\'olya's urn with $b=3$ blue balls and $r=2$ red balls initially.}
 \label{fig:Polya_urn}
\end{figure}

We are interested in the fraction of blue and red balls in the urn at large times. 
Let $X_{n}$ denote the number of blue balls in the urn when there are $n$ balls in the urn in total; 
initially we have $X_{b+r} = b$. 
Furthermore, let $x_{n} = X_{n} / n$ denote the fraction of blue balls when there are $n$ balls in total. 

Let us start by computing the expected increase in the number of blue balls at each time step: 
\[
 \E \left[ X_{n+1} \, \middle| \, X_{n} \right] 
= \left( X_{n} + 1 \right) \times \frac{X_{n}}{n} + X_{n} \times \left( 1 - \frac{X_{n}}{n} \right) 
= \left( 1 + \frac{1}{n} \right) X_{n}.
\]
Dividing this by $\left( n + 1 \right)$ we obtain that 
\[
 \E \left[ x_{n+1} \, \middle| \, \mathcal{F}_{n} \right] = x_{n},
\]
where $\mathcal{F}_{n}$ denotes the filtration of the process up until time $n$ (when there are $n$ balls in the urn); 
since $X_{n}$ is a Markov process, this is equivalent to conditioning on $X_{n}$. 
Thus the fraction of blue balls does not change in expectation; 
in other words, $x_{n}$ is a martingale. 
Since $x_{n}$ is also bounded ($x_{n} \in \left[ 0, 1 \right]$), it follows that $x_{n}$ converges almost surely to a limiting random variable. 
Readers not familiar with martingales should not be discouraged, as it is simple to see heuristically that $x_{n}$ converges: 
when there are $n$ balls in the urn, the change in $x_{n}$ is on the order of $1/n$, 
which converges to zero fast enough that one expects $x_{n}$ to converge.\footnote{The reader can convince themselves that 
$\E\left[ \left( x_{n+1} - x_{n} \right)^{2} \, \middle| \, \mathcal{F}_{n} \right] = \frac{x_{n} \left( 1 - x_{n} \right)}{\left( n + 1 \right)^{2}}$, 
and so the sum of the variances from time $N$ onwards is bounded by $\sum_{n \geq N} \left( n + 1 \right)^{-2} \leq 1/N$.}

Our next goal is to understand the limiting distribution of $x_{n}$. 
First, let us compute the probability of observing the first five draws as in Figure~\ref{fig:Polya_urn}, 
starting with a blue ball, then a red, then two blue ones, and lastly another red: 
this probability is 
$\tfrac{3}{5} \times \tfrac{2}{6} \times \tfrac{4}{7} \times \tfrac{5}{8} \times \tfrac{3}{9}$. 
Notice that the probability of obtaining $3$ blue balls and $2$ red ones in the first $5$ draws is the same regardless of the order in which we draw the balls. 
This property of the sequence $X_{n}$ is known as \emph{exchangeability} and has several useful consequences (most of which we will not explore here). 
It follows that the probability of seeing $k$ blue balls in the first $n$ draws takes on the following form: 
\[
 \p \left( X_{n+b+r} = b + k \right) 
= \binom{n}{k} \frac{b \left( b + 1 \right) \dots \left( b + k - 1 \right) \times r \left( r + 1 \right) \dots \left( r + n - k - 1 \right)}{\left( b + r \right) \left( b + r + 1 \right) \dots \left( b + r + n - 1 \right)}
\]
From this formula one can read off that $X_{n+b+r} - b$ is distributed according to the 
\emph{beta-binomial distribution} with parameters $\left( n, b, r \right)$. 
An alternative way of sampling from the beta-binomial distribution is to first sample a probability $p$ from the beta distribution with parameters $b$ and $r$ 
(having density $x \mapsto \tfrac{\Gamma \left( b + r \right)}{\Gamma \left( b \right) \Gamma \left( r \right)} x^{b-1} \left( 1 -x \right)^{r-1} \mathbf{1}_{\left\{ x \in \left[ 0, 1 \right] \right\}}$), 
and then conditionally on $p$, sample from the binomial distribution with $n$ trials and success probability $p$. 
Conditionally on $p$, the strong law of large numbers applied to the binomial distribution thus tells us that 
$\left( X_{n+b+r} - b \right) / n$ 
converges almost surely to $p$. 
Since $x_{n} = \left( X_{n+b+r} - b \right) / n + o(1)$, it follows that $x_{n} \to p$ almost surely. 
We have thus derived the following theorem. 
\begin{theorem}\label{thm:polya}
Let $x_{n}$ denote the fraction of blue balls at time $n$ (when there are $n$ balls in total) in a classical P\'olya urn which starts with $b$ blue balls and $r$ red balls. 
Then 
\[
 \lim_{n \to \infty} x_{n} = x
\]
almost surely, where $x \sim \Beta \left( b, r \right)$. 
\end{theorem}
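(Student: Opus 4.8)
The plan is to follow the two-step route outlined in the discussion: first derive an \emph{exact} formula for the law of the number of blue balls after $n$ draws by exploiting exchangeability, recognize it as a beta-binomial distribution, and then invoke the standard mixture representation of the beta-binomial together with the strong law of large numbers to pin down the almost sure limit.

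\textbf{Step 1: an exact formula via exchangeability.} First I would compute the probability of any prescribed sequence of draws. If in the first $n$ draws we see blue balls at $k$ specified positions and red balls at the remaining $n-k$, then at the moment of the $j$-th blue draw the urn holds $b+(j-1)$ blue balls and $b+r+(\text{draw index}-1)$ balls in total, and similarly for red draws; multiplying these conditional probabilities gives numerator $b(b+1)\dots(b+k-1)\cdot r(r+1)\dots(r+n-k-1)$ and denominator $(b+r)(b+r+1)\dots(b+r+n-1)$, \emph{independently of the order}. This is exchangeability. Summing over the $\binom{n}{k}$ orderings yields
\[
 \p \left( X_{n+b+r} = b + k \right)
= \binom{n}{k} \frac{b \left( b + 1 \right) \dots \left( b + k - 1 \right) \, r \left( r + 1 \right) \dots \left( r + n - k - 1 \right)}{\left( b + r \right) \left( b + r + 1 \right) \dots \left( b + r + n - 1 \right)},
\]
which (after rewriting rising factorials via $\Gamma$) exhibits $X_{n+b+r}-b$ as a beta-binomial random variable with parameters $(n,b,r)$. (Alternatively, this identity can be checked by a one-line induction on $n$.)

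\textbf{Step 2: mixture representation and SLLN.} Next I would recall that the beta-binomial law with parameters $(n,b,r)$ is exactly the law of $\Bin(n,p)$ when $p\sim\Beta(b,r)$; this follows by integrating the binomial pmf against the $\Beta(b,r)$ density and using the Beta integral. Crucially, one can realize the whole urn process jointly with a single latent variable $p\sim\Beta(b,r)$ so that, conditionally on $p$, the ``blue/red'' indicators of the successive draws are i.i.d.\ $\mathrm{Bernoulli}(p)$ (so that $X_{N+b+r}-b\mid p\sim\Bin(N,p)$ for \emph{every} $N$ simultaneously). Conditionally on $p$, the strong law of large numbers gives $(X_{N+b+r}-b)/N\to p$ a.s.; since this holds for Lebesgue-a.e.\ value of $p$, by Fubini it holds a.s.\ unconditionally. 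As $b,r$ are fixed, $x_n=X_n/n=(X_{N+b+r})/(N+b+r)$ differs from $(X_{N+b+r}-b)/N$ by $O(1/n)$, so $x_n\to p$ a.s.\ with $p\sim\Beta(b,r)$, as claimed.

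\textbf{Main obstacle.} There is no real analytic difficulty; the one point requiring care is making the de Finetti–type coupling honest, i.e.\ checking that the beta-binomial formula from Step 1 genuinely arises from \emph{one} latent $p\sim\Beta(b,r)$ driving all draws at once (not merely a match of one-dimensional marginals for each fixed $n$), and then upgrading the conditional a.s.\ convergence to unconditional a.s.\ convergence via Fubini. A clean alternative that sidesteps the coupling: note directly (as in the text) that $x_n$ is a bounded martingale, hence converges a.s.\ and in $L^q$ to some $x_\infty$, and then identify the law of $x_\infty$ by computing $\E[x_\infty^q]=\lim_n\E[x_n^q]$ from the beta-binomial formula and matching these moments to those of $\Beta(b,r)$, which (being determined by its moments, as it has bounded support) forces $x_\infty\sim\Beta(b,r)$.
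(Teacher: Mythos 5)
Your proof takes essentially the same route as the paper: exchangeability gives the closed-form beta-binomial law, and the mixture representation plus the SLLN identifies the limit. You are right to flag the de Finetti coupling as the one gap to fill. The paper's phrase ``conditionally on $p$, the SLLN tells us $(X_{n+b+r}-b)/n \to p$ a.s.'' tacitly assumes the entire urn trajectory can be realized, jointly over $n$, as i.i.d.\ $\mathrm{Bernoulli}(p)$ draws given a single latent $p \sim \Beta(b,r)$; matching one-dimensional marginals for each fixed $n$ does not by itself license a pathwise SLLN, so either one verifies the coupling directly (de Finetti for this exchangeable sequence) or one routes around it. Your ``clean alternative'' is precisely the logically complete version of what the paper sketches: the bounded-martingale step (which the paper does prove first) already yields an a.s.\ limit $x_\infty$, after which the beta-binomial formula need only be used to identify the \emph{distributional} limit of $x_n$ --- via moments as you suggest, or even more directly, since $\Bin(n,p)/n \to p$ in probability implies $x_n \Rightarrow \Beta(b,r)$, and a.s.\ convergence plus convergence in distribution forces $x_\infty \sim \Beta(b,r)$. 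So the proposal is correct, and your remark about the coupling is exactly the right way to tighten the paper's argument.
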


\begin{example}
The classical P\'olya urn shows up in uniform attachment trees as it describes the evolution of subtree sizes as follows. 
\begin{figure}[h!]
 \centering
 \includegraphics[width=0.55\textwidth]{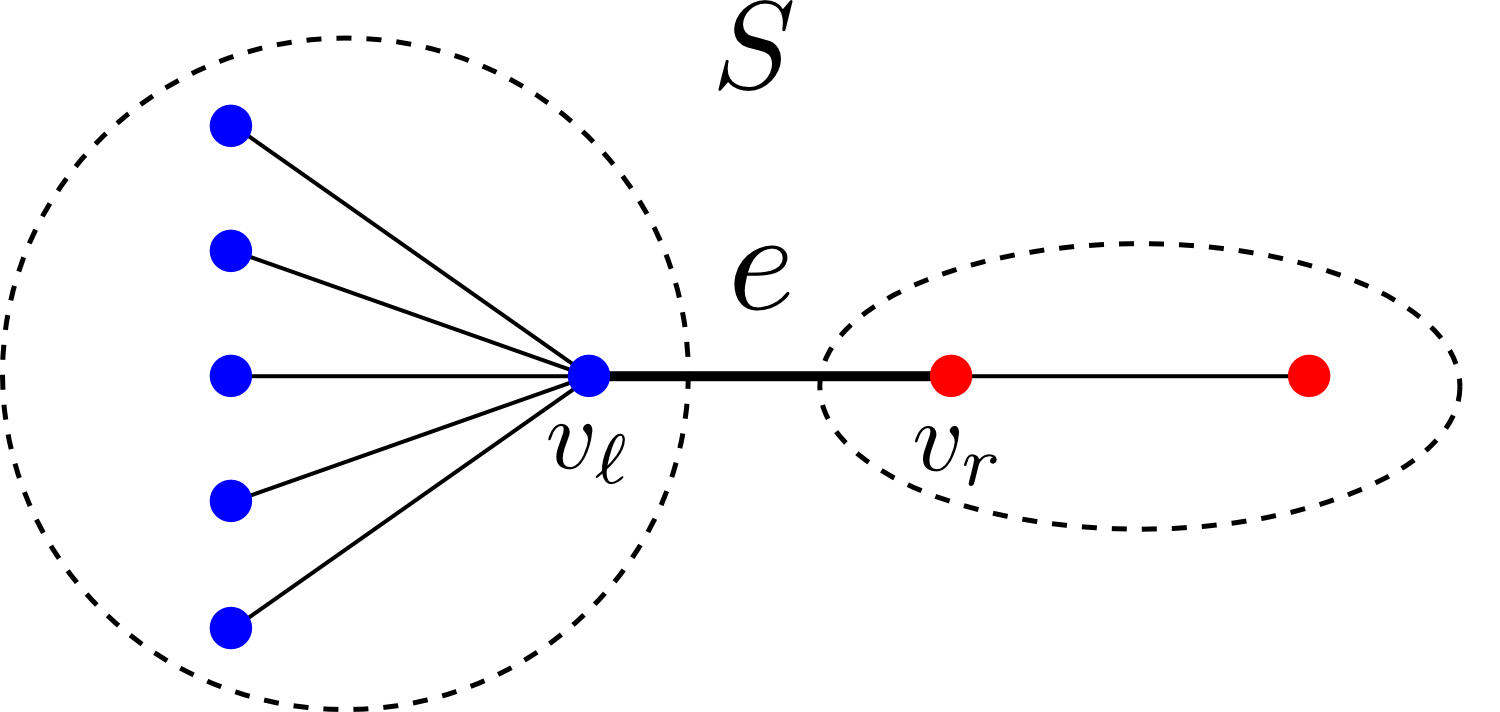}
 \caption{The subtree sizes in uniform attachment trees evolve according to the classical P\'olya urn.}
 \label{fig:Polya_UA}
\end{figure}
Pick an edge of a tree, such as edge $e$ in tree $S$ in Figure~\ref{fig:Polya_UA}, with endpoints $v_{\ell}$ and $v_{r}$. 
This edge partitions the tree into two parts on either side of the edge: a subtree under $v_{\ell}$ and a subtree under $v_{r}$. 
The sizes of these subtrees (i.e., the number of vertices they contain) evolve exactly like the classical P\'olya urn described above (in the example depicted in Figure~\ref{fig:Polya_UA} we have $b=6$ and $r=2$ initially). 
\end{example}

\subsubsection{Multiple colors}\label{sec:Polya_colors}

A natural generalization is to consider multiple colors instead of just two. 
Let $m$ be the number of colors, 
let $\underline{X}_{n} = \left( X_{n,1}, \dots, X_{n,m} \right)$ 
denote the number of balls of each color when there are $n$ balls in the urn in total, 
and let $\underline{x}_{n} = \underline{X}_{n} / n$. 
Assume that initially there are $r_{i}$ balls of color $i$. %  for each $i \in \left[ m \right]$. 

In this case the fraction of balls of each color converges to the natural multivariate generalization of the beta distribution: the Dirichlet distribution. 
The Dirichlet distribution with parameters $\left( r_{1}, \dots, r_{m} \right)$, 
denoted $\Dir \left( r_{1}, \dots, r_{m} \right)$, 
has density 
\[
\underline{x} = \left( x_{1}, \dots, x_{m} \right) \mapsto \tfrac{\Gamma \left( \sum_{i=1}^{m} r_{i} \right)}{\prod_{i=1}^{m} \Gamma \left( r_{i} \right)} x_{1}^{r_{1} - 1} \dots x_{m}^{r_{m}-1} \mathbf{1}_{\left\{ \forall i \, : \, x_{i} \in \left[ 0, 1 \right], \sum_{i=1}^{m} x_{i} = 1 \right\}}.
\]
It has several natural properties that one might expect, for instance the aggregation property, 
that if one groups coordinates $i$ and $j$ together, then the resulting distribution is still Dirichlet, with parameters $r_{i}$ and $r_{j}$ replaced by $r_{i} + r_{j}$. 
This also implies that the univariate marginals are beta distributions. 

The convergence result for multiple colors follows similarly to the one for two colors, so we simply state the result. 
\begin{theorem}\label{thm:polya_dirichlet}
Let $\underline{x}_{n}$ denote the fraction of balls of each color at time $n$ (when there are $n$ balls in total) in a classical P\'olya urn of $m$ colors which starts with $r_{i}$ balls of color $i$.  
Then 
\[
 \lim_{n \to \infty} \underline{x}_{n} = \underline{x}
\]
almost surely, where $\underline{x} \sim \Dir \left( r_{1}, \dots, r_{m} \right)$. 
\end{theorem}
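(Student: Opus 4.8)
The plan is to follow the same three-step route used for the two-color case in the proof of Theorem~\ref{thm:polya}, replacing the binomial by the multinomial and the beta distribution by the Dirichlet distribution.

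First I would establish that the limit exists. For each color $i \in [m]$, the one-step computation $\E\left[ X_{n+1,i} \,\middle|\, \cF_n \right] = \left( 1 + \tfrac{1}{n} \right) X_{n,i}$ goes through verbatim, so each coordinate $x_{n,i} = X_{n,i}/n$ is a bounded martingale; hence $\underline{x}_n$ converges almost surely to some random vector $\underline{x}$ supported on the probability simplex. (Alternatively, the variance-summation heuristic in the footnote to Theorem~\ref{thm:polya} applies coordinatewise.) It then remains only to identify the law of $\underline{x}$.

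Next comes the key algebraic step. As in the two-color case, the sequence of drawn colors is \emph{exchangeable}: the probability of any fixed ordered sequence of draws producing $k_i$ balls of color $i$ equals $\prod_{i=1}^{m} r_i (r_i+1) \cdots (r_i + k_i - 1) / \left( R(R+1) \cdots (R+n-1) \right)$, where $R = \sum_{i=1}^m r_i$ and $n = \sum_{i=1}^m k_i$, independent of the order. Multiplying by the multinomial coefficient $\binom{n}{k_1,\dots,k_m}$ gives the law of the vector of color counts after $n$ draws, namely the Dirichlet-multinomial (multivariate Pólya) distribution with parameters $(n, r_1, \dots, r_m)$. Using the Dirichlet integral identity $\int x_1^{a_1-1} \cdots x_m^{a_m-1} \, d\underline{x} = \prod_{i=1}^m \Gamma(a_i) / \Gamma\left( \sum_{i=1}^m a_i \right)$ over the simplex, one checks that this distribution is exactly the law obtained by first sampling $\underline{p} \sim \Dir(r_1,\dots,r_m)$ and then drawing a multinomial vector with $n$ trials and success probabilities $\underline{p}$; by exchangeability the same random vector $\underline{p}$ can be taken to govern all finite-$n$ marginals simultaneously.

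Finally, conditionally on $\underline{p}$, the strong law of large numbers applied coordinatewise shows that the empirical color frequencies in the first $n$ draws converge almost surely to $\underline{p}$; since $\underline{x}_n$ differs from these frequencies by $O(1/n)$ (the contribution of the initial $R$ balls), it follows that $\underline{x}_n \to \underline{p} \sim \Dir(r_1,\dots,r_m)$ almost surely, which is the claim. The only point requiring a little care — the main obstacle, such as it is — is using exchangeability to produce a single limiting random vector $\underline{p}$ consistent across all $n$, the multivariate de Finetti / Kolmogorov-extension content already implicit in the two-color argument; beyond this there is no genuine analytic difficulty, and the Dirichlet identity and its mixture interpretation are routine.
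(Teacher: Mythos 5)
Your proposal is correct and takes essentially the same approach the paper indicates: the paper proves the two-color case (Theorem~\ref{thm:polya}) via the martingale / exchangeability / beta-mixture-of-binomial / conditional SLLN route, and for the $m$-color case simply remarks that "the convergence result for multiple colors follows similarly," which is precisely the multinomial/Dirichlet generalization you carry out. Your explicit flagging of the de Finetti--type step needed to make the conditional SLLN rigorous is a point the paper leaves equally implicit in the two-color argument.
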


\begin{example}
A P\'olya urn with multiple colors shows up in uniform attachment trees when we partition the tree into multiple subtrees. 
\begin{figure}[h!]
 \centering
 \includegraphics[width=0.55\textwidth]{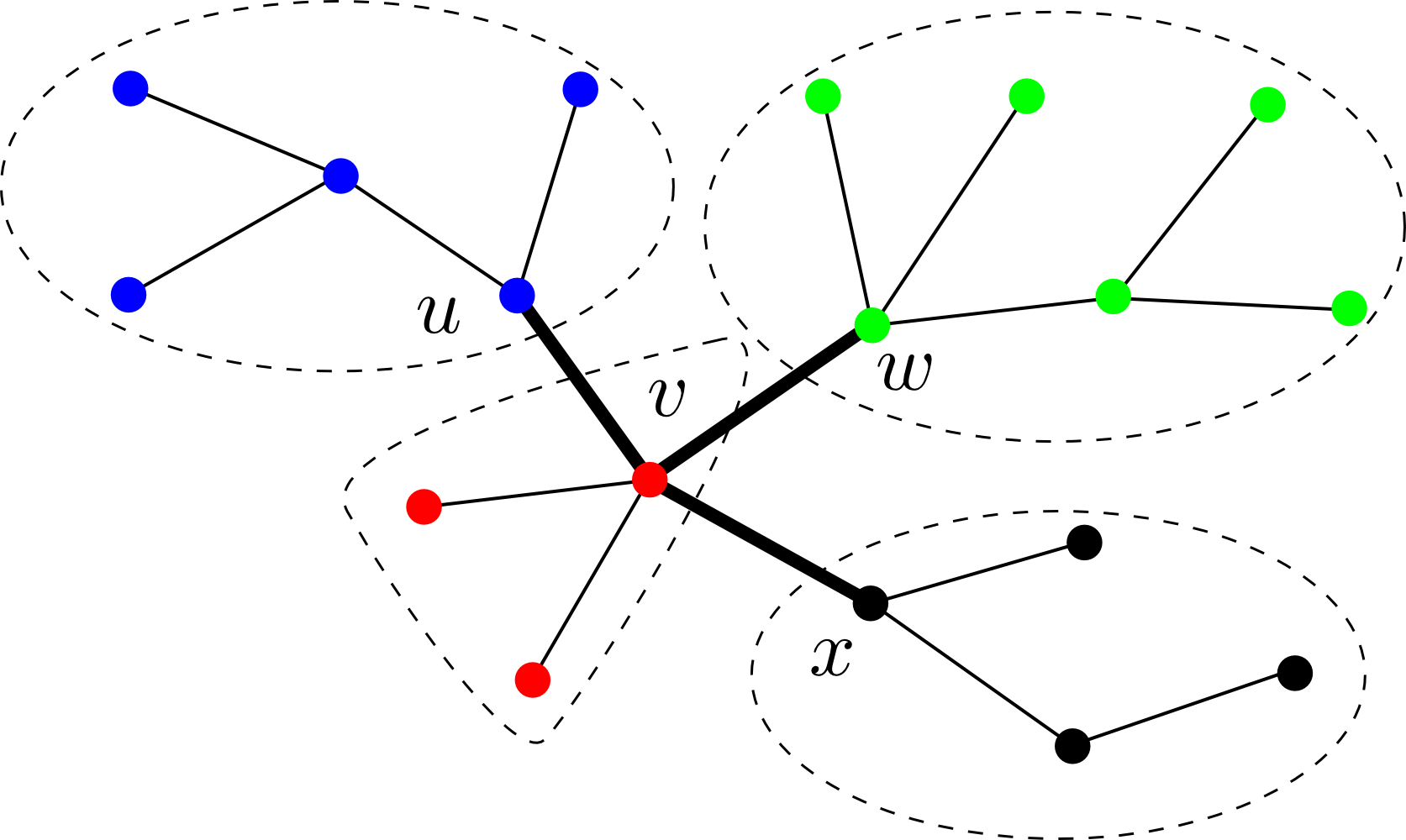}
 \caption{The sizes of multiple subtrees in uniform attachment trees evolve according to a P\'olya urn with multiple colors.}
 \label{fig:Polya_UA_m}
\end{figure}
Picking a subtree of $m$ vertices as highlighted in bold in Figure~\ref{fig:Polya_UA_m}, the tree is partitioned into $m$ subtrees. 
The sizes of these subtrees (i.e., the number of vertices they contain) evolve exactly like the classical P\'olya urn with $m$ colors described above. 
\end{example}

\subsubsection{Adding multiple balls at a time}

It is also natural to consider adding more than one extra ball at each time step. The effect of this is to change the parameter of the limiting Dirichlet distribution. 
\begin{theorem}\label{thm:polya_dirichlet_multiple}
Let $\underline{x}_{n}$ denote the fraction of balls of each color at time $n$ (when there are $n$ balls in total) in a P\'olya urn of $m$ colors which starts with $r_{i}$ balls of color $i$ and where $k$ balls of the same color are added at each time step.  
Then 
\[
 \lim_{n \to \infty} \underline{x}_{n} = \underline{x}
\]
almost surely, where $\underline{x} \sim \Dir \left( r_{1} / k, \dots, r_{m} / k \right)$. 
\end{theorem}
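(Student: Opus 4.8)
The plan is to mimic the exchangeability argument used above for the classical two-color urn (Theorem~\ref{thm:polya}), the only genuinely new ingredient being a bookkeeping computation with Gamma functions. Write $R = \sum_{i=1}^{m} r_i$ for the initial number of balls, so that after $t$ draws the urn contains $R + tk$ balls. Let $c_1, c_2, \dots \in [m]$ record the colors of the successive balls drawn, and for a fixed color sequence $(c_1, \dots, c_t)$ let $n_i = \left| \{ s \le t : c_s = i \} \right|$, so $\sum_{i} n_i = t$. When the $s$-th ball is drawn, out of the $R + (s-1)k$ balls present the urn contains $r_{c_s} + k m_s$ of color $c_s$, where $m_s$ is the number of earlier draws of that color; hence
\[
\p\left( c_1, \dots, c_t \right) = \frac{\prod_{i=1}^{m} r_i \left( r_i + k \right) \cdots \left( r_i + (n_i - 1) k \right)}{R \left( R + k \right) \cdots \left( R + (t-1) k \right)} = \frac{\Gamma\left( R/k \right)}{\Gamma\left( R/k + t \right)} \prod_{i=1}^{m} \frac{\Gamma\left( r_i/k + n_i \right)}{\Gamma\left( r_i/k \right)},
\]
where the second equality follows by pulling a factor of $k$ out of each of the $t$ terms in the numerator and each of the $t$ terms in the denominator (the powers of $k$ cancel since $\sum_i n_i = t$). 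In particular this probability depends on $(c_1, \dots, c_t)$ only through the counts $(n_1, \dots, n_m)$, so the color sequence is exchangeable, and multiplying by the multinomial coefficient $t! / (n_1! \cdots n_m!)$ shows that the vector of draw counts $\underline{N}_t := (N_{t,1}, \dots, N_{t,m})$ has the Dirichlet-multinomial distribution with parameters $\left( t; r_1/k, \dots, r_m/k \right)$.

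Next I would use the standard mixture representation of that distribution: drawing $\underline{N}_t$ in this way is the same as first sampling $\underline{x} \sim \Dir\left( r_1/k, \dots, r_m/k \right)$ and then, conditionally on $\underline{x}$, sampling $\underline{N}_t$ from the multinomial distribution with $t$ trials and success probabilities $\underline{x}$; equivalently, conditionally on $\underline{x}$ the draws $c_1, c_2, \dots$ are i.i.d.\ with law $\underline{x}$. This is verified exactly as the beta-binomial was unwound in the two-color case above --- integrating the multinomial pmf against the Dirichlet density and recognizing a Dirichlet normalizing constant reproduces the displayed formula --- and, crucially, nothing here requires the parameters $r_i/k$ to be integers, since the Dirichlet density and the Gamma identities make sense for arbitrary positive reals. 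Since the finite-dimensional laws of $(c_1, c_2, \dots)$ agree under the urn dynamics and under this mixture, the two processes have the same law, so I may assume the urn's color sequence is generated by the mixture; then the strong law of large numbers applied conditionally on $\underline{x}$ gives $N_{t,i}/t \to x_i$ almost surely for each $i \in [m]$.

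Finally I would pass back to ball fractions. After $t$ draws there are $r_i + k N_{t,i}$ balls of color $i$ and $n = R + tk$ balls in total, so
\[
x_{n,i} = \frac{r_i + k N_{t,i}}{R + tk} = \frac{kt}{R+tk} \cdot \frac{N_{t,i}}{t} + \frac{r_i}{R+tk} \longrightarrow x_i \quad \text{almost surely as } t \to \infty,
\]
which is the assertion of the theorem (note that the urn only ever contains $R + tk$ balls, so these are the only values of $n$ for which $x_n$ is defined). There is no serious obstacle in this argument; the one step that deserves a moment's care is the Gamma-function bookkeeping that converts a ratio of rising factorials with common difference $k$ into rising factorials with common difference $1$ and shifted parameters $r_i/k$, together with the observation that the exchangeability and mixture arguments go through verbatim for non-integer parameters.
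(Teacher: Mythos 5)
Your proof is correct, and it follows the same exchangeability-based template the paper sketches for the classical two-color urn (Theorem~\ref{thm:polya}): compute the probability of a fixed color sequence, observe exchangeability, identify the draw-count distribution as a Dirichlet mixture of multinomials, apply the conditional strong law of large numbers, and pass from draw counts to ball fractions. The paper itself gives no proof of this particular statement --- it only says the extension ``changes the parameter of the limiting Dirichlet distribution'' --- so your argument is precisely the filling-in that the text invites; the Gamma-function bookkeeping that rescales the common difference from $k$ to $1$ and produces the parameters $r_i/k$ is the only genuinely new step, and your remark that everything remains valid for non-integer $r_i/k$ is the right point to flag.
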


\begin{example}
P\'olya urns where two balls of the same color are added at each time step appear in preferential attachment trees as follows. 
Consider partitioning the tree into $m$ subtrees as in Figure~\ref{fig:Polya_UA_m}, 
but now define the size of a subtree to be the sum of the degrees of the vertices in it. 
Consider which subtree the new incoming vertex attaches to. 
In the preferential attachment process 
each subtree is picked with probability proportional to its size 
and whichever subtree is picked, the sum of the degrees (i.e., the size) increases by $2$ due to the new edge. 
Thus the subtree sizes evolve exactly according to a P\'olya urn described above with $k = 2$. 
\end{example}

\subsubsection{More general urn models}

More generally, one can add some number of balls of each color at each time step. 
The replacement rule is often described by a \emph{replacement matrix} of size $m \times m$, 
where the $i$th row of the matrix describes how many balls of each color to add to the urn if a ball of color $i$ is drawn. 
The urn models studied above correspond to replacement matrices that are a constant multiple of the identity. 
The literature on general replacement matrices is vast and we do not intend to discuss it here; 
our goal is just to describe the simple case when the replacement matrix is 
$\left(
\begin{smallmatrix} 
2 & 0 \\ 
1 & 1 
\end{smallmatrix}
\right)$. 
We refer to~\cite{Jan06} for detailed results on triangular replacement matrices, 
and to the references therein for more general replacement rules.

The urn model with replacement matrix 
$\left(
\begin{smallmatrix} 
2 & 0 \\ 
1 & 1 
\end{smallmatrix}
\right)$ 
can also be described as the classical P\'olya urn with two colors as described in Section~\ref{sec:polya_classical}, 
but in addition a blue ball is always added at each time step. 
It is thus natural to expect that there will be many more blue balls than red balls in the urn at large times. 
It turns out that the number of red balls at time $n$ scales as $\sqrt{n}$ instead of linearly in $n$. 
The following result is a special case of what is proved in~\cite{Jan06}. 
\begin{theorem}\label{thm:Polya_2011}
Let $\left( X_{n}, Y_{n} \right)$ denote the number of blue and red balls, respectively, at time $n$ (when there are $n$ balls in total) 
in a P\'olya urn with replacement matrix 
$\left(
\begin{smallmatrix} 
2 & 0 \\ 
1 & 1 
\end{smallmatrix}
\right)$. 
Assume that initially there are some red balls in the urn. 
Then $Y_{n} / \sqrt{n}$ converges in distribution to a nondegenerate random variable. 
\end{theorem}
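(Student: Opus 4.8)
The plan is to build an explicit nonnegative martingale out of $Y_n$, deduce almost sure (hence distributional) convergence of $Y_n/\sqrt{n}$ from the martingale convergence theorem, and then establish nondegeneracy of the limit via a second-moment bound. Throughout, condition on the parity of the initial total $n_0$ of balls, so that after $m$ draws the urn contains $n_0 + 2m$ balls; thus $n$ below ranges over $\{n_0, n_0+2, n_0+4, \dots\}$, and I write $\cF_n$ for the natural filtration and $Y_n$ for the number of red balls when there are $n$ in total.

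First I would record the one-step dynamics of the red count: when the urn has $n$ balls of which $Y_n$ are red, a red ball is drawn with probability $Y_n/n$, in which case $Y_{n+2} = Y_n + 1$, and otherwise $Y_{n+2} = Y_n$. Hence
\[
\E\left[ Y_{n+2} \,\middle|\, \cF_n \right] = Y_n \cdot \frac{n+1}{n}.
\]
This pins down the normalizing sequence: set $c_n := \Gamma\!\left(\tfrac{n}{2}+\tfrac12\right)\big/\Gamma\!\left(\tfrac{n}{2}\right)$, which satisfies $c_{n+2}/c_n = (n+1)/n$ and, by Stirling, $c_n \sim \sqrt{n/2}$. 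Then $M_n := Y_n / c_n$ is a nonnegative martingale, so it converges almost surely to a limit $M_\infty \ge 0$; consequently $Y_n/\sqrt{n} \to M_\infty/\sqrt{2}$ almost surely, and in particular in distribution.

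It remains to show that $M_\infty$ is not almost surely constant, for which I would obtain an $L^2$ bound. From the dynamics above, $M_{n+2} = (Y_n + B)/c_{n+2}$ with $B$ Bernoulli$(Y_n/n)$ given $\cF_n$, so
\[
\Var\left( M_{n+2} \,\middle|\, \cF_n \right) = \frac{(Y_n/n)\left( 1 - Y_n/n \right)}{c_{n+2}^{2}} \le \frac{Y_n}{n\, c_{n+2}^{2}} .
\]
Since $M_n$ is a martingale, $\E[Y_n] = \mu\, c_n$ with $\mu := Y_{n_0}/c_{n_0} > 0$ (using that there is at least one red ball initially), and as $c_n \sim \sqrt{n/2}$ the expected conditional variances are $O(n^{-3/2})$, hence summable; thus $\sup_n \E[M_n^2] < \infty$. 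Therefore $M_n \to M_\infty$ also in $L^2$, $\E M_\infty = \mu > 0$, and $\Var(M_\infty) = \lim_n \Var(M_n)$. Finally, the replacement matrix adds a blue ball at every step, so after the first draw the urn has at least one blue and (by hypothesis) at least one red ball thereafter, i.e.\ $0 < Y_n < n$ for all $n \ge n_0+2$, making the conditional variance above strictly positive; summing the variance increments gives $\Var(M_\infty) \ge \E\big[\Var\left(M_{n_0+4}\,\middle|\,\cF_{n_0+2}\right)\big] > 0$. Hence $M_\infty$, and so the distributional limit $M_\infty/\sqrt{2}$ of $Y_n/\sqrt n$, is nondegenerate.

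I expect the only genuinely delicate point to be this nondegeneracy step: the martingale and its almost sure limit are immediate, but ruling out a constant limit requires the $L^2$ control, which in turn rests on the fact that the lower-order linear term $Y_n/(n c_{n+2}^2)$ in the second-moment recursion is summable in expectation — and that hinges on the $\sqrt n$ growth of $\E[Y_n]$ that the martingale property itself supplies. Should one additionally want the explicit law of the limit, the alternative route is the Athreya--Karlin embedding of the urn into a continuous-time multitype branching process, but the martingale argument above already yields the stated qualitative claim.
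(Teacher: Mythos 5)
Your proof is correct, and it is worth saying right away that the paper itself does not prove Theorem~\ref{thm:Polya_2011}: it states the result as a special case of the general limit theorems for triangular urn schemes in Janson~\cite{Jan06} and leaves it at that. So you are not reproducing the paper's argument; you are supplying a self-contained elementary proof where the source offers only a citation.

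Your route is the classical one for a single ``small'' color in a triangular urn: identify the exact normalizer $c_n = \Gamma(n/2+1/2)/\Gamma(n/2)$ from the one-step drift $\E[Y_{n+2}\mid\cF_n] = \tfrac{n+1}{n}Y_n$, get a nonnegative martingale $M_n = Y_n/c_n$, invoke the martingale convergence theorem for the almost sure (hence distributional) limit, and then use the $L^2$ recursion $\E[M_{n+2}^2] = \E[M_n^2] + \E[\Var(M_{n+2}\mid\cF_n)]$ together with $\E[Y_n]=\mu c_n \asymp \sqrt n$ to show the variance increments are $O(n^{-3/2})$ and hence summable. You handle the two genuinely delicate points cleanly: (i) nondegeneracy cannot be read off from positivity of the mean alone, so you need $L^2$-boundedness and a positive variance increment; and (ii) if the urn starts with zero blue balls the first step is deterministic, which is why you (correctly) take the lower bound on $\Var(M_\infty)$ from the \emph{second} step rather than the first. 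Compared to Janson's machinery (moment asymptotics and branching-process embeddings for general triangular replacement matrices), your argument buys elementariness and transparency at the cost of generality: it gives convergence and nondegeneracy but not, without further work, the explicit law of the limit — which, as you note, is where the Athreya--Karlin embedding would become the natural tool. For the qualitative statement as written, your proof is complete and arguably preferable pedagogically.
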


\begin{example}
The evolution of the degree of any given vertex in a preferential attachment tree can be understood through such a P\'olya urn. 
More precisely, fix a vertex $v$ in the tree, let $Y_{n}$ denote the degree of $v$ when there are $n$ vertices in total, 
and let $X_{n}$ denote the sum of the degrees of all other vertices. 
Then $\left( X_{n}, Y_{n} \right)$ evolves exactly according to a P\'olya urn with replacement matrix 
$\left(
\begin{smallmatrix} 
2 & 0 \\ 
1 & 1 
\end{smallmatrix}
\right)$. 
This implies that the degree of any fixed vertex scales as $\sqrt{n}$ in the preferential attachment tree. 
\end{example}

\subsection{Proofs using P\'olya urns}

With the background on P\'olya urns covered, we are now ready to understand some of the proofs of the results concerning root-finding algorithms from~\cite{BDL16}.

\subsubsection{A root-finding algorithm based on the centroid}

We start by presenting a simple root-finding algorithm for uniform attachment trees. 
This algorithm is not optimal, but its analysis is simple and highlights the basic ideas.

For a tree $T$, if we remove a vertex $v \in V(T)$, then the tree becomes a forest consisting of disjoint subtrees of the original tree. 
Let $\psi_{T} \left( v \right)$ denote the size (i.e., the number of vertices) of the largest component of this forest. 
For example, in Figure~\ref{fig:Polya_UA} if we remove $v_{r}$ from $S$, then the tree breaks into a singleton and a star consisting of $6$ vertices; thus $\psi_{S} \left( v_{r} \right) = 6$. 
A vertex $v$ that minimizes $\psi_{T} \left( v \right)$ is known as a \emph{centroid} of $T$; one can show that there can be at most two centroids. 
We define the confidence set $H_{\psi}$ by taking the set of $K$ vertices with smallest $\psi$ values. 

\begin{theorem}\label{thm:UA_centroid}\cite{BDL16}
The centroid-based $H_{\psi}$ defined above is a root-finding algorithm for the uniform attachment tree. 
More precisely, 
if $K \geq \tfrac{5}{2} \tfrac{\log \left( 1 / \eps \right)}{\eps}$, 
then 
\[
\liminf_{n \to \infty} \p \left( 1 \in H_{\psi} \left( \UA \left( n \right)^{\circ} \right) \right) 
\geq 
1 - \frac{4\eps}{1-\eps},
\]
where $1$ denotes the root, and $\UA \left( n \right)^{\circ}$ denotes the unlabeled version of $\UA \left( n \right)$. 
\end{theorem}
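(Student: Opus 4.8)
\emph{Proof plan.} The idea is to show that the root can fail to lie in $H_{\psi}$ only when many vertices carry an atypically large subtree, and then to count such vertices using the Pólya urn description of subtree sizes (Theorem~\ref{thm:polya}).

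\textbf{Step 1: a combinatorial reduction.} Root $\UA(n)$ at vertex $1$, and for $v\neq 1$ let $T_{v}$ be the subtree consisting of $v$ and its descendants. Removing $v$ always leaves a component containing the root of size $n-|T_{v}|$, so $\psi_{\UA(n)}(v)\geq n-|T_{v}|$. Also, removing vertex $1$ leaves the subtree hanging off vertex $2$ (of size $|T_{2}|$) together with further subtrees partitioning the remaining $n-1-|T_{2}|$ vertices, whence $\psi_{\UA(n)}(1)\leq\max\{|T_{2}|,\,n-1-|T_{2}|\}$. Now if $1\notin H_{\psi}(\UA(n)^{\circ})$, then at least $K$ vertices $v\neq 1$ satisfy $\psi_{\UA(n)}(v)\leq\psi_{\UA(n)}(1)$, and for each such $v$ the first inequality gives $|T_{v}|\geq n-\psi_{\UA(n)}(1)$.

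\textbf{Step 2: a good event and the split.} Fix $\eps\in(0,1)$ and let $\mathcal{E}=\{\psi_{\UA(n)}(1)\leq(1-\eps)n\}$. By the bound above, $\mathcal{E}^{c}\subseteq\{|T_{2}|>(1-\eps)n\}\cup\{|T_{2}|<\eps n\}$. The sizes of the two subtrees on either side of the edge $\{1,2\}$ evolve exactly as a classical two-colour Pólya urn started from one ball of each colour, so Theorem~\ref{thm:polya} gives $|T_{2}|/n\Rightarrow\Beta(1,1)$, the uniform law on $[0,1]$; hence $\limsup_{n}\p(\mathcal{E}^{c})\leq 2\eps$. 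On $\mathcal{E}$, Step 1 forces at least $K$ vertices $v\neq 1$ with $|T_{v}|\geq\eps n$, so with $N_{n}:=\#\{v\neq 1:|T_{v}|\geq\eps n\}$ we get $\p\bigl(1\notin H_{\psi}(\UA(n)^{\circ})\bigr)\leq\p(\mathcal{E}^{c})+\p(N_{n}\geq K)$. For each fixed $k\geq 2$ the subtree $T_{v_{k}}$ of the vertex born at time $k$, against the rest of the tree, is again a classical Pólya urn, now started from one ball against $k-1$ balls, so $|T_{v_{k}}|/n\Rightarrow\Beta(1,k-1)$ and $\limsup_{n}\p(|T_{v_{k}}|\geq\eps n)=(1-\eps)^{k-1}$. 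Since only about $K/2$ indices lie in $\{2,\dots,K/2\}$, the event $\{N_{n}\geq K\}$ forces $\#\{k>K/2:|T_{v_{k}}|\geq\eps n\}\geq K/2$, so Markov's inequality together with passing to the limit (the finite-$n$ probabilities being dominated, uniformly summable in $k$, by a crude Chernoff estimate on the beta--binomial tail) yields
\[
\limsup_{n\to\infty}\p(N_{n}\geq K)\ \leq\ \frac{2}{K}\sum_{k>K/2}(1-\eps)^{k-1}\ =\ \frac{2(1-\eps)^{\lceil K/2\rceil}}{\eps K}.
\]
For $K\geq\tfrac52\tfrac{\log(1/\eps)}{\eps}$ one has $(1-\eps)^{K/2}\leq e^{-\eps K/2}\leq\eps^{5/4}$, so this term is of lower order in $\eps$; combined with Step 2 this gives $\limsup_{n}\p\bigl(1\notin H_{\psi}(\UA(n)^{\circ})\bigr)\leq 2\eps+o(\eps)\leq\tfrac{4\eps}{1-\eps}$, which is the claim.

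\textbf{Main obstacle.} The substantive point is Step 1: a vertex can ``beat'' the root only by carrying a subtree of size $\geq\eps n$, after which the geometric decay $(1-\eps)^{k-1}$ of $\p(|T_{v_{k}}|\geq\eps n)$ (coming from the $\Beta(1,k-1)$ limit, i.e.\ from the subtree-size Pólya urn) controls the count. One must split the sum at $K/2$ rather than apply Markov to $\E[N_{n}]\asymp 1/\eps$ directly---this is exactly what turns the $\log(1/\eps)$ factor in the hypothesis on $K$ into a failure probability of order $\eps$. The remaining technical nuisance is justifying the interchange of limit and sum for finite $n$, which requires a tail bound for the beta--binomial distribution that is summable uniformly in $n$.
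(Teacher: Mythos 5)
Your proof is correct, and it follows a genuinely different route from the paper's for the crucial step of bounding the probability that some non-root vertex beats the root. The paper observes that whenever $1\notin H_{\psi}$ at least one of the offending vertices must have label $i>K$, then lower-bounds $\psi(i)$ for all such $i$ at once via the partition $\bigl(T_{1,K},\dots,T_{K,K}\bigr)$ obtained by cutting the edges among $\{1,\dots,K\}$: since any $i>K$ lies in some $T_{k,K}$, removing $i$ leaves a component of size at least $\sum_{j\neq k}|T_{j,K}|$, and a union bound over $k\le K$ together with the $\Dir(1,\dots,1)$ limit yields $\limsup_n\p(\exists\,i>K:\psi(i)\le(1-\eps)n)\le K(1-\eps)^{K-1}$. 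You instead work with the rooted subtrees $T_{v_k}$, use the cleaner pointwise bound $\psi(v)\ge n-|T_v|$, count how many vertices carry an $\eps n$-sized subtree, and apply a split-then-Markov argument to the count. Each $|T_{v_k}|/n$ individually follows a $\Beta(1,k-1)$ Pólya urn, exactly as each $|T_{k,K}|/n$ does in the paper's partition, so the underlying probabilistic input is the same; what differs is the combinatorial reduction. The trade-off: the paper's argument only ever takes the limit of a single fixed $K$-dimensional vector, so no interchange of limit and sum is needed, whereas your route requires a uniform-in-$n$, summable-in-$k$ tail bound on the beta-binomial to pass the $\limsup$ inside the infinite sum---a genuine extra technical step, which you correctly flag. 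Your observation that a naive Markov bound on $\E[N_n]\asymp 1/\eps$ is insufficient, and that the split at $K/2$ is what converts the hypothesis $K\gtrsim\log(1/\eps)/\eps$ into a failure probability of order $\eps$, is exactly right and mirrors the role of the union bound in the paper. The only place you are slightly casual is the final numeric step ``$2\eps+o(\eps)\le 4\eps/(1-\eps)$'': since the theorem is a claim for every fixed $\eps\in(0,1)$, you should either verify the inequality explicitly for the stated $K$ (noting that for $\eps\ge 1/5$ the bound is vacuous) or state the bound you actually prove.
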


\begin{proof}
We label the vertices of the uniform attachment tree in chronological order. 
We start by introducing some notation that is useful throughout the proof. 
For $0 \leq i \leq k$, denote by $T_{i,k}$ the tree containing vertex $i$ in the forest obtained by removing in $\UA\left( n \right)$ all edges between vertices $\left\{ 1, \dots, k \right\}$. 
Also, let $\left| T \right|$ denote the size of a tree $T$, i.e., the number of vertices it contains. 
Note that the vector 
$\left( \left| T_{1, k} \right|, \dots, \left| T_{k,k} \right| \right)$ 
evolves according to the classical P\'olya urn with $k$ colors as described in Section~\ref{sec:Polya_colors}, with initial state $\left( 1, \dots, 1 \right)$. 
Therefore, by Theorem~\ref{thm:polya_dirichlet}, the normalized vector 
$\left( \left| T_{1, k} \right|, \dots, \left| T_{k,k} \right| \right) / n$ 
converges in distribution to a Dirichlet distribution with parameters $\left( 1, \dots, 1 \right)$. 

Now observe that 
\[
\p \left( 1 \notin H_{\psi} \right) 
\leq 
\p \left( \exists i > K : \psi \left( i \right) \leq \psi \left( 1 \right) \right) 
\leq 
\p \left( \psi \left( 1 \right) \geq \left( 1 - \eps \right) n \right) 
+ \p \left( \exists i > K : \psi \left( i \right) \leq \left( 1 - \eps \right) n \right). 
\]
We bound the two terms appearing above separately, starting with the first one. 
Note that 
$\psi \left( 1 \right) \leq \max \left\{ \left| T_{1,2} \right|, \left| T_{2,2} \right| \right\}$, 
and both 
$\left| T_{1,2} \right| / n$ 
and 
$\left| T_{2,2} \right| / n$ 
converge in distribution to a uniform random variable in $\left[ 0, 1 \right]$. 
Hence a union bound gives us that 
\[
\limsup_{n \to \infty} \p \left( \psi \left( 1 \right) \geq \left( 1 - \eps \right) n \right) 
\leq 
2 \lim_{n \to \infty} \p \left( \left| T_{1,2} \right| \geq \left( 1 - \eps \right) n \right) 
= 2 \eps. 
\]

For the other term, first observe that for any $i > K$ we have 
\[
 \psi \left( i \right) \geq \min_{1 \leq k \leq K} \sum_{j = 1, j \neq k}^{K} \left| T_{j,K} \right|.
\]
Now using the results on P\'olya urns from Section~\ref{sec:polya} we have that for every $k$ such that $1 \leq k \leq K$, 
the random variable 
$\tfrac{1}{n} \sum_{j = 1, j \neq k}^{K} \left| T_{j,K} \right|$ 
converges in distribution to the $\Beta \left( K - 1, 1 \right)$ distribution. 
Hence by a union bound we have that  
\begin{align*}
\limsup_{n \to \infty} \p \left( \exists i > K : \psi \left( i \right) \leq \left( 1 - \eps \right) n \right) 
&\leq 
\lim_{n \to \infty} \p \left( \exists 1 \leq k \leq K : \sum_{j = 1, j \neq k}^{K} \left| T_{j,K} \right| \leq \left( 1 - \eps \right) n \right) \\
&\leq 
K \left( 1 - \eps \right)^{K - 1}.
\end{align*}
Putting together the two bounds gives that 
\[
 \limsup_{n \to \infty} \p \left( 1 \notin H_{\psi} \right) \leq 2 \eps + K \left( 1 - \eps \right)^{K-1},
\]
which concludes the proof due to the assumption on $K$. 
\end{proof}

The same estimator $H_{\psi}$ works for the preferential attachment tree as well, if one takes 
$K \geq C \frac{\log^{2} \left( 1 / \eps \right)}{\eps^{4}}$ 
for some positive constant $C$. 
The proof mirrors the one above, but involves a few additional steps; we refer to~\cite{BDL16} for details.

For uniform attachment the bound on $K$ given by Theorem~\ref{thm:UA_centroid} is not optimal. 
It turns out that it is possible to write down the maximum likelihood estimator (MLE) for the root in the uniform attachment model; 
we do not do so here, see~\cite{BDL16}. 
One can view the estimator $H_{\psi}$ based on the centroid as a certain ``relaxation'' of the MLE. 
By constructing a certain ``tighter'' relaxation of the MLE, one can obtain a confidence set with size subpolynomial in $1/\eps$ as described in Theorem~\ref{thm:root_UA}. 
The analysis of this is the most technical part of~\cite{BDL16} and we refer to~\cite{BDL16} for more details.

\subsubsection{Lower bounds}

As mentioned above, the MLE for the root can be written down explicitly. 
This aids in showing a lower bound on the size of a confidence set. 
In particular, Bubeck~et~al.~\cite{BDL16} define a set of trees whose probability of occurrence under the uniform attachment model is not too small, 
yet the MLE provably fails, giving the lower bound described in Theorem~\ref{thm:root_UA}.  
We refer to~\cite{BDL16} for details.

On the other hand, for the preferential attachment model it is not necessary to use the structure of the MLE to obtain a lower bound. 
A simple symmetry argument suffices to show the lower bound in Theorem~\ref{thm:root_PA}, which we now sketch.

First observe that the probability of error for the optimal procedure is non-decreasing with $n$, since otherwise one could simulate the process to obtain a better estimate. 
Thus it suffices to show that the optimal procedure must have a probability of error of at least $\eps$ for some finite $n$. 
We show that there is some finite $n$ such that with probability at least $2\eps$, the root is isomorphic to at least $2c / \eps$ vertices in $\PA(n)$. 
Thus if a procedure outputs at most $c/\eps$ vertices, then it must make an error at least half the time (so with probability at least $\eps$). 

Observe that the probability that the root is a leaf in $\PA(n)$ is 
$\tfrac{1}{2} \times \tfrac{3}{4} \times \dots \times \left( 1 - \tfrac{1}{2n} \right) = \Theta \left( 1 / \sqrt{n} \right)$. 
By choosing $n = \Theta \left( 1 / \eps^{2} \right)$, this happens with probability $\Theta \left( \eps \right)$. 
Furthermore, conditioned on the root being a leaf, 
with constant probability vertex $2$ is connected to 
$\Theta \left( \sqrt{n} \right) = \Theta \left( 1 / \eps \right)$ leaves (here we use Theorem~\ref{thm:Polya_2011}), 
which are then isomorphic to the root.

\subsection{Outlook: open problems and extensions}

There are many open problems and further directions that one can pursue;  
the four main papers we have discussed~\cite{BMR15,CDKM15,BEMR16,BDL16} contain $20$ open problems and conjectures alone. 
For instance, can the bounds on the size of the optimal confidence set be improved and ultimately tightened? 
What about other tree growth models? What happens when we lose the tree structure and consider general graphs, e.g., by adding multiple edges at each time step?

When the tree growth model is not as combinatorial as uniform attachment or preferential attachment, 
then other techniques might be useful. 
In particular, many tree growth models can be embedded into continuous time branching processes 
and then the full machinery of general branching processes can be brought to the fore and applied; 
see~\cite{RTV07,Bhamidi07} and the references therein for such results. 
This approach can also be used to obtain finite confidence sets for the root, as demonstrated recently in~\cite{JogLoh16} for sublinear preferential attachment trees.

A closely related problem to those discussed in these lectures is that of 
detecting the source of a diffusion spreading on an underlying network. 
The results are very similar to those above: 
the rumor source can be efficiently detected in many settings, 
see, e.g.,~\cite{shah2011rumors,shah2012finding,KhimLoh15}. 
A different twist on this question is motivated by anonymous messaging services: 
can one \emph{design protocols} for spreading information that preserve anonymity by minimizing the probability of source detection? 
Fanti et al.~\cite{fanti2015spy} introduced a process, termed adaptive diffusion, that indeed achieves this goal. 
Understanding the tradeoffs between privacy and other desiderata is timely and should lead to lots of interesting research.

% %%%%%%%%%%%%%%%%%%%%%%%%%%%%%%%%%%%%%%%%%%%%%%%%%%%%%%%%%%%%%%%
% \subsection{Further notes and comments} \label{sec:notes45} %%%
% %%%%%%%%%%%%%%%%%%%%%%%%%%%%%%%%%%%%%%%%%%%%%%%%%%%%%%%%%%%%%%%
% 

%%%%%%%%%%%%%%%%%%%%%%%%
%%% Acknowledgements %%%
%%%%%%%%%%%%%%%%%%%%%%%%

% \section*{Acknowledgements}
% 

%%%%%%%%%%%%%%%%%%
%%% References %%%
%%%%%%%%%%%%%%%%%%

\newpage 

\bibliographystyle{abbrv}
\bibliography{bib}

\begin{thebibliography}{10}

\bibitem{ABH16}
E.~Abbe, A.~S. Bandeira, and G.~Hall.
\newblock Exact recovery in the stochastic block model.
\newblock {\em IEEE Transactions on Information Theory}, 62(1):471--487, 2016.

\bibitem{AbbeSandon15}
E.~Abbe and C.~Sandon.
\newblock Community detection in general stochastic block models: fundamental
  limits and efficient recovery algorithms.
\newblock In {\em Proceedings of the 56th Annual IEEE Symposium on Foundations
  of Computer Science (FOCS)}. IEEE, 2015.

\bibitem{AbbeSandon15detection}
E.~Abbe and C.~Sandon.
\newblock Detection in the stochastic block model with multiple clusters: proof
  of the achievability conjectures, acyclic {BP}, and the
  information-computation gap.
\newblock Preprint available at \url{http://arxiv.org/abs/1512.09080}, 2015.

\bibitem{ABBN04}
S.~Artstein, K.~Ball, F.~Barthe, and A.~Naor.
\newblock {Solution of Shannon's problem on the monotonicity of entropy}.
\newblock {\em Journal of the American Mathematical Society}, 17(4):975--982,
  2004.

\bibitem{ABBN_PTRF04}
S.~Artstein, K.~M. Ball, F.~Barthe, and A.~Naor.
\newblock On the rate of convergence in the entropic central limit theorem.
\newblock {\em Probability Theory and Related Fields}, 129(3):381--390, 2004.

\bibitem{BBN03}
K.~Ball, F.~Barthe, and A.~Naor.
\newblock Entropy jumps in the presence of a spectral gap.
\newblock {\em Duke Mathematical Journal}, 119(1):41--63, 2003.

\bibitem{BA99}
A.-L. Barab{\'a}si and R.~Albert.
\newblock Emergence of scaling in random networks.
\newblock {\em Science}, 286(5439):509--512, 1999.

\bibitem{Bhamidi07}
S.~Bhamidi.
\newblock {Universal techniques to analyze preferential attachment trees:
  Global and Local analysis}.
\newblock Available online at \url{http://www.unc.edu/~bhamidi/preferent.pdf},
  2007.

\bibitem{BickelChen}
P.~J. Bickel and A.~Chen.
\newblock {A nonparametric view of network models and Newman--Girvan and other
  modularities}.
\newblock {\em {Proceedings of the National Academy of Sciences}},
  106(50):21068--21073, 2009.

\bibitem{Blachman65}
N.~Blachman.
\newblock The convolution inequality for entropy powers.
\newblock {\em IEEE Transactions on Information Theory}, 11(2):267--271, 1965.

\bibitem{Bobkov99}
S.~G. Bobkov.
\newblock Isoperimetric and analytic inequalities for log-concave probability
  measures.
\newblock {\em The Annals of Probability}, 27(4):1903--1921, 1999.

\bibitem{BRST01}
B.~Bollob{\'a}s, O.~Riordan, J.~Spencer, and G.~Tusn{\'a}dy.
\newblock {The Degree Sequence of a Scale-Free Random Graph Process}.
\newblock {\em Random Structures \& Algorithms}, 18(3):279--290, 2001.

\bibitem{BDL16}
S.~Bubeck, L.~Devroye, and G.~Lugosi.
\newblock Finding {A}dam in random growing trees.
\newblock {\em Random Structures \& Algorithms}, to appear, 2016.

\bibitem{BDER16}
S.~Bubeck, J.~Ding, R.~Eldan, and M.~Z. R\'{a}cz.
\newblock Testing for high-dimensional geometry in random graphs.
\newblock {\em Random Structures \& Algorithms}, 49(3):503--532, 2016.

\bibitem{BEMR16}
S.~Bubeck, R.~Eldan, E.~Mossel, and M.~Z. R\'{a}cz.
\newblock From trees to seeds: on the inference of the seed from large trees in
  the uniform attachment model.
\newblock {\em Bernoulli}, to appear, 2016.

\bibitem{BubeckGanguly15}
S.~Bubeck and S.~Ganguly.
\newblock Entropic {CLT} and phase transition in high-dimensional {W}ishart
  matrices.
\newblock Preprint available at \url{http://arxiv.org/abs/1509.03258}, 2015.

\bibitem{BMR15}
S.~Bubeck, E.~Mossel, and M.~Z. R\'{a}cz.
\newblock On the influence of the seed graph in the preferential attachment
  model.
\newblock {\em IEEE Transactions on Network Science and Engineering},
  2(1):30--39, 2015.

\bibitem{BLCS}
T.~N. Bui, S.~Chaudhuri, F.~T. Leighton, and M.~Sipser.
\newblock Graph bisection algorithms with good average case behavior.
\newblock {\em Combinatorica}, 7(2):171--191, 1987.

\bibitem{CondonKarp}
A.~Condon and R.~M. Karp.
\newblock {Algorithms for Graph Partitioning on the Planted Partition Model}.
\newblock {\em Random Structures and Algorithms}, 18(2):116--140, 2001.

\bibitem{CDKM15}
N.~Curien, T.~Duquesne, I.~Kortchemski, and I.~Manolescu.
\newblock Scaling limits and influence of the seed graph in preferential
  attachment trees.
\newblock {\em Journal de l'{\'E}cole polytechnique --- Math{\'e}matiques},
  2:1--34, 2015.

\bibitem{DKMZ11}
A.~Decelle, F.~Krzakala, C.~Moore, and L.~Zdeborov{\'a}.
\newblock Asymptotic analysis of the stochastic block model for modular
  networks and its algorithmic applications.
\newblock {\em Physical Review E}, 84(6):066106, 2011.

\bibitem{DGLU11}
L.~Devroye, A.~Gy{\"o}rgy, G.~Lugosi, and F.~Udina.
\newblock High-dimensional random geometric graphs and their clique number.
\newblock {\em Electronic Journal of Probability}, 16:2481--2508, 2011.

\bibitem{Polya23}
F.~Eggenberger and G.~P{\'o}lya.
\newblock {{\"U}ber die Statistik verketteter Vorg{\"a}nge}.
\newblock {\em {ZAMM - Journal of Applied Mathematics and Mechanics /
  Zeitschrift f{\"u}r Angewandte Mathematik und Mechanik}}, 3(4):279--289,
  1923.

\bibitem{Eldan11}
R.~Eldan.
\newblock {An efficiency upper bound for inverse covariance estimation}.
\newblock {\em Israel Journal of Mathematics}, 207(1):1--9, 2015.

\bibitem{fanti2015spy}
G.~Fanti, P.~Kairouz, S.~Oh, and P.~Viswanath.
\newblock {Spy vs. Spy: Rumor Source Obfuscation}.
\newblock In {\em ACM SIGMETRICS}, volume~43, pages 271--284. ACM, 2015.

\bibitem{gibbs2002choosing}
A.~L. Gibbs and F.~E. Su.
\newblock On choosing and bounding probability metrics.
\newblock {\em International Statistical Review}, 70(3):419--435, 2002.

\bibitem{GirvanNewman}
M.~Girvan and M.~E. Newman.
\newblock Community structure in social and biological networks.
\newblock {\em {Proceedings of the National Academy of Sciences}},
  99(12):7821--7826, 2002.

\bibitem{HRH02}
P.~D. Hoff, A.~E. Raftery, and M.~S. Handcock.
\newblock {Latent Space Approaches to Social Network Analysis}.
\newblock {\em Journal of the American Statistical Association},
  97(460):1090--1098, 2002.

\bibitem{HLL83}
P.~W. Holland, K.~B. Laskey, and S.~Leinhardt.
\newblock Stochastic blockmodels: First steps.
\newblock {\em Social {N}etworks}, 5(2):109--137, 1983.

\bibitem{Jan06}
S.~Janson.
\newblock Limit theorems for triangular urn schemes.
\newblock {\em Probability Theory and Related Fields}, 134(3):417--452, 2006.

\bibitem{jiang2013approximation}
T.~Jiang and D.~Li.
\newblock {Approximation of Rectangular Beta-Laguerre Ensembles and Large
  Deviations}.
\newblock {\em Journal of Theoretical Probability}, 28:804--847, 2015.

\bibitem{JogLoh16}
V.~Jog and P.-L. Loh.
\newblock Analysis of centrality in sublinear preferential attachment trees via
  the {CMJ} branching process.
\newblock Preprint available at \url{http://arxiv.org/abs/1601.06448}, 2016.

\bibitem{johnson2004fisher}
O.~Johnson and A.~Barron.
\newblock Fisher information inequalities and the central limit theorem.
\newblock {\em Probability Theory and Related Fields}, 129(3):391--409, 2004.

\bibitem{johnstone2001distribution}
I.~M. Johnstone.
\newblock On the distribution of the largest eigenvalue in principal components
  analysis.
\newblock {\em {Annals of Statistics}}, 29(2):295--327, 2001.

\bibitem{KarrerNewmanDCSBM}
B.~Karrer and M.~E. Newman.
\newblock Stochastic blockmodels and community structure in networks.
\newblock {\em Physical Review E}, 83(1):016107, 2011.

\bibitem{KhimLoh15}
J.~Khim and P.-L. Loh.
\newblock Confidence sets for the source of a diffusion in regular trees.
\newblock Preprint available at \url{http://arxiv.org/abs/1510.05461}, 2015.

\bibitem{Linnik59}
Y.~V. Linnik.
\newblock {An Information-Theoretic Proof of the Central Limit Theorem with
  Lindeberg Conditions}.
\newblock {\em {Theory of Probability \& Its Applications}}, 4(3):288--299,
  1959.

\bibitem{Mah92}
H.~M. Mahmoud.
\newblock Distances in random plane-oriented recursive trees.
\newblock {\em Journal of Computational and Applied Mathematics},
  41(1-2):237--245, 1992.

\bibitem{massoulie2014community}
L.~Massouli{\'e}.
\newblock Community detection thresholds and the weak {R}amanujan property.
\newblock In {\em Proceedings of the 46th Annual ACM Symposium on Theory of
  Computing (STOC)}, pages 694--703. ACM, 2014.

\bibitem{MNS13}
E.~Mossel, J.~Neeman, and A.~Sly.
\newblock A proof of the block model threshold conjecture.
\newblock Preprint available at \url{http://arxiv.org/abs/1311.4115}, 2013.

\bibitem{MNS14belief}
E.~Mossel, J.~Neeman, and A.~Sly.
\newblock Belief propagation, robust reconstruction, and optimal recovery of
  block models.
\newblock In {\em Proceedings of the 27th Conference on Learning Theory
  (COLT)}, 2014.

\bibitem{MNS15}
E.~Mossel, J.~Neeman, and A.~Sly.
\newblock Consistency thresholds for the planted bisection model.
\newblock In {\em Proceedings of the 47th Annual ACM on Symposium on Theory of
  Computing (STOC)}, pages 69--75. ACM, 2015.

\bibitem{Penrosebook}
M.~Penrose.
\newblock {\em {Random Geometric Graphs}}, volume~5 of {\em Oxford Studies in
  Probability}.
\newblock Oxford University Press, 2003.

\bibitem{rohe2011spectral}
K.~Rohe, S.~Chatterjee, and B.~Yu.
\newblock Spectral clustering and the high-dimensional stochastic blockmodel.
\newblock {\em The Annals of Statistics}, 39(4):1878--1915, 2011.

\bibitem{RTV07}
A.~Rudas, B.~T{\'o}th, and B.~Valk{\'o}.
\newblock Random trees and general branching processes.
\newblock {\em Random Structures \& Algorithms}, 31(2):186--202, 2007.

\bibitem{shah2011rumors}
D.~Shah and T.~Zaman.
\newblock {Rumors in a Network: Who's the Culprit?}
\newblock {\em IEEE Transactions on Information Theory}, 57(8):5163--5181,
  2011.

\bibitem{shah2012finding}
D.~Shah and T.~Zaman.
\newblock {Finding Rumor Sources on Random Trees}.
\newblock Preprint available at \url{http://arxiv.org/abs/1110.6230v3}, 2015.

\bibitem{Shannon48}
C.~E. Shannon.
\newblock {A Mathematical Theory of Communication}.
\newblock {\em {The Bell System Technical Journal}}, 27:379--423, 623--656,
  1948.

\bibitem{Stam59}
A.~J. Stam.
\newblock {Some inequalities satisfied by the quantities of information of
  Fisher and Shannon}.
\newblock {\em Information and Control}, 2(2):101--112, 1959.

\end{thebibliography}

%%%%%%%%%%%%%%%%
%%% Appendix %%%
%%%%%%%%%%%%%%%%

% \newpage

% \appendix

\end{document}